\newtheorem{theo}{Theorem}[section]
\newtheorem{lem}[theo]{Lemma}
\newtheorem{theorem}[theo]{Theorem}
\newtheorem{proposition}[theo]{Proposition}
\newtheorem{prop}[theo]{Proposition}
\newtheorem{defi}[theo]{Definition}
\newtheorem{remark}[theo]{Remark}
\newtheorem{example}[theo]{Example}
\newcommand{\He}{\mathbb H}
\newcommand{\R}{\mathbb R}
\newcommand{\N}{\mathbb N}
\newcommand{\eps}{\varepsilon}
\newcommand{\SPAN}{\mathrm{Span}}
\begin{document}

\title[Whitney Theorem for Curves in Carnot Groups]{Pliability, or the Whitney Extension Theorem for Curves in Carnot Groups}

\begin{abstract}
The Whitney extension theorem is a classical result in analysis giving a necessary and sufficient condition for a function defined on a closed set to be extendable to the whole space with a given class of regularity. It has been adapted to several settings, among which the one of Carnot groups. However, the target space has generally been assumed to be equal to $\R^d$ for some $d\ge 1$.

We focus here on the extendability problem for general ordered pairs $(G_1,G_2)$ (with $G_2$ non-Abelian). We
analyze in particular the case $G_1=\R$ and characterize the groups $G_2$ for which the Whitney extension property holds, in terms of a newly  introduced notion that we call \emph{pliability}. Pliability happens to be related to rigidity as defined by Bryant an Hsu. We exploit this relation in order to provide examples of non-pliable Carnot groups, that is, Carnot groups so that the Whitney extension property does
not hold. We use  geometric control theory results on the accessibility of control affine systems in order to test
the pliability of a Carnot group. In particular, we recover some recent results by Le Donne, Speight and Zimmermann about
Lusin approximation in Carnot groups of step 2 and Whitney extension in Heisenberg groups. We extend such results to all pliable Carnot
groups, and we show that the latter may be of arbitrarily large step.

\end{abstract}

\keywords{Whitney extension theorem, Carnot group, rigid curve, horizontal curve}
\subjclass[2010]{22E25, 41A05, 53C17, 54C20, 58C25}

\author{Nicolas Juillet} \author{Mario Sigalotti}
\address{Institut de Recherche Math\'ematique Avanc\'ee\\
  UMR 7501\\
 Universit\'e de Strasbourg et CNRS\\
 7 rue Ren\'e  Descartes\\
 67\,000 Strasbourg\\
 France}
\email{nicolas.juillet@math.unistra.fr}
 \address{
 Inria, team GECO \& CMAP, \'Ecole Polytechnique, CNRS, Universit\'e Paris-Saclay\\ Palaiseau\\ France} 
\email{mario.sigalotti@inria.fr}

\maketitle

\section{Introduction}

Extending functions is a basic but fundamental tool in analysis. Fundamental is in particular the extension theorem established by 
H.~Whitney in 1934, which guarantees the existence of an extension of a function defined on a closed set of a finite-dimensional vector space to a function of class $\mathcal{C}^k$, provided that the minimal obstruction imposed by Taylor series is satisfied. The Whitney extension theorem plays a significative part in the study of ideals of differentiable functions (see \cite{Malgrange}) and its variants are still an active research topic of classical analysis 
(see for instance \cite{Fef}).


Analysis on Carnot groups with a homogeneous distance like the Carnot--Carath\'eodory distance, as presented in Folland and Stein's monograph \cite{FoS}, is nowadays a classical topic too. 
Carnot groups provide a generalization of finite-dimensional vector spaces that is both close to the original model and radically different. 
This is why Carnot groups provide a wonderful field of investigation in many branches of mathematics. 
Not only the setting is elegant and rich but it is at the natural crossroad between different fields of mathematics, as for instance analysis of PDEs or geometric control theory (see for instance \cite{trimestre} for a contemporary account). It is therefore natural to recast the Whitney extension theorem in the context of Carnot groups. As far as we know, the first generalization of a Whitney extension theorem to Carnot groups can be found in \cite{FSS,FSS2}, where 
De Giorgi's result on sets of finite perimeter is adapted 
first to the Heisenberg group and then 
to any Carnot group of step 2. 
This generalization is used in \cite{KS}, where the authors stress the difference between intrinsic regular hypersurfaces and classical $\mathcal{C}^1$ hypersurfaces in the Heisenberg group. The recent paper \cite{VP} gives a final statement for the Whitney extension theorem for scalar-valued functions on Carnot groups: The most natural generalization that one can imagine holds in its full strength (for more details, see Section~\ref{s:2}).


The study of the Whitney extension property for Carnot groups is however not closed. 
Following a suggestion by Serra Cassano in \cite{serra-cassano-notes}, one might consider maps between Carnot groups instead of 
solely scalar-valued functions on Carnot groups.  The new question presents richer geometrical features and echoes classical topics of metric geometry. We think in particular of the classification of Lipschitz embeddings for metric spaces and of the related question of the extension of Lipschitz maps between metric spaces. 
We refer to \cite{BF,WY,RW,BLP} for the corresponding results for 
the most usual Carnot groups: Abelian groups $\R^m$ or Heisenberg groups $\He_n$ (of topological dimension $2n+1$). In view of Pansu--Rademacher theorem on Lipschitz maps (see Theorem~\ref{PR_them}), the most directly related Whitney extension problem is the one for $\mathcal{C}^1_H$-maps, the so-called horizontal maps of class $\mathcal{C}^1$ defined on Carnot groups. This is the framework of our paper.


Simple pieces of argument show that the Whitney extension theorem does not generalize to every ordered pair of Carnot groups. Basic facts in 
contact geometry suggest 
that \emph{the extension does not hold for $(\R^{n+1},\He_n)$}, i.e., for maps from $\R^{n+1}$ to $\He_n$. It is actually known that local algebraic constraints of first order make $n$ the maximal dimension for a Legendrian submanifold in a contact manifold of dimension $2n+1$. In fact if the derivative of a differentiable map has range in the kernel of the contact form, the range of the map has dimension at most $n$. A map from $\R^{n+1}$ to $\He_n$ is $\mathcal{C}^1_H$ if it is $\mathcal{C}^1$ with horizontal derivatives, i.e., if its derivatives take value in the kernel of the canonical contact form. In particular, a $\mathcal{C}^1_H$-map defined on $\R^{n+1}$ is nowhere of maximal rank. Moreover, it is a consequence of the Pansu--Rademacher theorem that a Lipschitz map from $\R^{n+1}$ to $\He_n$ is derivable  at almost every point with only horizontal derivatives. Again $n$ is their maximal rank. In order to contradict the extendability of Lipschitz maps,  it is enough to define a function on a subset whose topological constraints force any possible extension 
to have maximal rank at some point. \emph{Let us sketch a concrete example that provides a constraint for the Lipschitz extension problem}: It is known that $\R^n$ can be isometrically embedded in $\He_n$ with the exponential map (for the Euclidean and Carnot--Carath\'eodory distances). One can also consider two `parallel' copies of $\R^n$ in $\R^{n+1}$ mapped to parallel images in $\He_n$: the second is obtained from the first by a vertical translation. Aiming for a contradiction, suppose that there exists an extending Lipschitz map $F$. It provides on $\R^n\times [0,1]$ a Lipschitz homotopy between $F(\R^{n}\times \{0\})$ and $F(\R^{n}\times \{1\})$. Using the definition of a Lipschitz map and some topology, the topological dimension of the range is at least $n+1$ and its $(n+1)$-Hausdorff measure is positive. This is not possible because of the dimensional constraints explained above. See \cite{BF} for a more rigorous proof 
using a different set as a domain for the function to be extended. The proof in \cite{BF} is formulated in terms of index theory 
and purely $(n+1)$-unrectifiability of $\He_n$. The latter property means that the $(n+1)$-Hausdorff measure of the range of a Lipschitz map is zero. Probably this construction and some other ideas from the works on the Lipschitz extension problem \cite{BF,WY,RW,BLP} can be adapted to the Whitney extension problem. It is not really our concern in the present article to list the similarities between the two problems, but rather to exhibit a class of ordered pairs of Carnot groups for which the validity of the Whitney extension problem depends on the geometry of the groups. Note that a different type of counterexample to the Whitney extension theorem, involving groups which are neither Euclidean spaces nor Heisenberg groups, has been obtained by A.~Khozhevnikov in \cite{Koz}. It is described in Example \ref{ex:Koz}. 


Our work is motivated by  
F.~Serra Cassano's suggestion in his Paris' lecture notes at the Institut Henri Poincar\'e in 2014 \cite{serra-cassano-notes}. He proposes --- (i) to choose general Carnot groups $\mathbb{G}$ as target space, (ii) to look at $\mathcal{C}^1_H$ curves only, i.e., $\mathcal{C}^1$ maps from $\R$ to $\mathbb{G}$ with horizontal derivatives. As we will see, the problem is very different from the Lipschitz extension problem for $(\R,\mathbb{G})$ and from the Whitney extension problem for $(\mathbb{G},\R)$. Indeed, both such problems can be solved for every $\mathbb{G}$, while the 
answer to the extendibility question asked by Serra Cassano depends on the choice of $\mathbb{G}$. 
More precisely, we provide a geometric characterization of those $\mathbb{G}$ for which the $\mathcal{C}^1_H$-Whitney extension problem for $(\R,\mathbb{G})$ can always be solved. We say in this case that the pair $(\mathbb{R},\mathbb{G})$ has the $\mathcal{C}^1_H$ extension property.
Examples of target non-Abelian
Carnot groups 
for which 
$\mathcal{C}^1_H$ extendibility is possible 
have been identified by S.~Zimmerman in \cite{zimmerman}, where it is proved that
for every  $n\in \N$ the pair
$(\mathbb{R},\He_n)$
has the $\mathcal{C}^1_H$ extension property.

The main component of the characterization of Carnot groups $\mathbb{G}$ for which  $(\mathbb{R},\mathbb{G})$ has the $\mathcal{C}^1_H$ extension property is the notion of \emph{pliable horizontal vector}. A horizontal vector $X$ (identified with a left-invariant vector field) is pliable if for every $p\in\mathbb{G}$ and every neighborhood $\Omega$ of $X$ in the horizontal layer of $\mathbb{G}$, the support of all $\mathcal{C}^1_H$ curves with derivative in $\Omega$ starting from $p$ in the direction $X$ form a neighborhood of the integral curve  of $X$ starting from $p$ (for details, see Definition~\ref{d:pliable} and Proposition~\ref{p:pliableiffweaklypliable}). 
This notion is close but not equivalent to the property of the integral curves of $X$ not  to be rigid in the 
sense introduced by Bryant and Hsu in \cite{BH}, as we illustrate by an example (Example~\ref{e:superEngel}). 
We say that \emph{a Carnot group $\mathbb{G}$ is pliable if all its horizontal vectors are pliable}. Since any rigid integral curve of a horizontal vector $X$ is not pliable, it is not hard to show that there exist non-pliable Carnot groups of any dimension larger than $3$ and of any step larger than $2$ (see Example~\ref{e:Engel}).  
On the other hand, we give some criteria ensuring the pliability of a Carnot group, notably the fact that it has step 2 (Theorem~\ref{t:step2}). 
We also prove the existence of pliable groups of any positive step (Proposition~\ref{p:every_step}). 

Our main theorem is the following. 
\begin{theo}\label{thm-necessary-and-sufficient}
 The pair  $(\mathbb{R},\mathbb{G})$ has the $\mathcal{C}^1_H$ extension property
 if and only if 
 $\mathbb{G}$ is pliable. 
\end{theo}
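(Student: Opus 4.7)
The plan is to prove the two implications separately. \emph{Necessity} ($\mathcal{C}^1_H$-extension implies pliability) I would handle by contraposition. If $\mathbb{G}$ admits a non-pliable horizontal vector $X$ at some $p \in \mathbb{G}$, witnessed by a neighborhood $\Omega$ of $X$, then failure of pliability furnishes times $t_n \searrow 0$ and target points $q_n$ with $d_{cc}(q_n, \exp(t_n X)(p)) = o(t_n)$ such that no $\mathcal{C}^1_H$ curve on $[0, t_n]$ starting at $p$ with initial velocity $X$ and derivative remaining in $\Omega$ reaches $q_n$. Setting $K = \{0\} \cup \{t_n : n \in \N\}$ and prescribing the jet $F(0) = p$, $F(t_n) = q_n$, $D(0) = D(t_n) = X$, the estimate $d_{cc}(q_n, \exp(t_n X)(p)) = o(t_n)$ is precisely the Whitney compatibility condition at $0$. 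Yet any $\mathcal{C}^1_H$ extension would, by continuity of its horizontal derivative at $0$, have derivative in $\Omega$ on some fixed interval $[0, \eta]$, contradicting the non-reachability of $q_n$ for $t_n < \eta$.

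For \emph{sufficiency}, assume $\mathbb{G}$ is pliable and let $(F, D)$ be a Whitney-compatible $\mathcal{C}^1_H$-jet on a closed set $K \subset \R$. The complement $\R \setminus K$ decomposes as a countable disjoint union of open intervals $\{J_i\}_{i \in I}$. My strategy is to extend $F$ on each $J_i$ separately using pliability, then verify global $\mathcal{C}^1_H$ regularity. Unbounded gaps I would extend freely by a $\mathcal{C}^1_H$ curve matching the one-sided boundary jet, for instance along the integral curve of the boundary horizontal vector. On a bounded gap $(a, b)$ with $a, b \in K$, the Whitney compatibility reads
\begin{equation*}
F(b) = F(a) \cdot \exp\bigl((b - a) D(a)\bigr) \cdot r_{a,b}, \qquad d_{cc}(r_{a,b}, e) = o(b - a), \qquad \|D(b) - D(a)\| = o(1).
\end{equation*}
Pliability of $D(a)$, together with the equivalence of Proposition~\ref{p:pliableiffweaklypliable}, then yields a $\mathcal{C}^1_H$ curve on $[a, b]$ from $F(a)$ to $F(b)$ with prescribed boundary derivatives $D(a)$ and $D(b)$, and with derivative on $(a, b)$ confined to a prescribed neighborhood $\Omega_{a,b}$ of $D(a)$. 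The assembled map is plainly $\mathcal{C}^1_H$ on $\R \setminus K$ and at isolated points of $K$.

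The main obstacle, and the place where my plan requires real work, is \emph{continuity of the derivative at accumulation points} $s_0$ of $K$. Continuity of the derivative of the extension at $s_0$ requires that for every sequence of gap intervals $(a_n, b_n)$ with $a_n, b_n \to s_0$, the derivatives on $(a_n, b_n)$ tend uniformly to $D(s_0)$, not merely the values at the boundary. One must therefore arrange the neighborhoods $\Omega_{a,b}$ to shrink to $\{D(a)\}$ as $b - a \to 0$. This demands a quantitative, scale-invariant refinement of pliability: the size of the neighborhood $\Omega$ sufficient to correct a Whitney error of group size $\eps(b-a)$ with boundary-derivative mismatch $\eps$ should itself tend to $0$ as $\eps \to 0$. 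Upgrading the qualitative definition of pliability to this effective, uniform form — and checking that it is implied by the standing hypothesis through Proposition~\ref{p:pliableiffweaklypliable} — is, in my view, the crux of the sufficiency proof and likely occupies the bulk of the paper's technical effort.
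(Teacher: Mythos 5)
Your overall strategy coincides with the paper's: necessity by building a Whitney-compatible jet on a sequence accumulating at a point where non-pliability blocks any extension, and sufficiency by filling the gaps of $\R\setminus K$ using pliability and then controlling the derivative at accumulation points of $K$. But both halves have a genuine gap. In the necessity direction, you verify the Whitney condition only for the pairs $(0,t_n)$; it must hold for \emph{all} pairs of points of $K$, in particular for $(t_n,t_m)$. In your construction the points $q_n$ are independent perturbations of $\exp(t_nX)(p)$ of size $o(t_n)$, and for consecutive $t_{n+1}<t_n$ the Whitney condition requires the deviation of $q_{n+1}$ from $q_n\cdot\exp\bigl((t_{n+1}-t_n)X\bigr)$ to be $o(t_n-t_{n+1})$, which can be much smaller than $o(t_n)$ (for $t_n=1/n$ one needs $o(1/n^2)$ while you only control $o(1/n)$). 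The paper avoids this by an incremental construction: it sets $y_{n+1}=y_n\cdot\exp(\rho_nV)\cdot\delta_{\rho_n}(x_n)$ with $\rho_n$ the gap length and the non-reachable increments $x_n$ chosen so that the resulting errors decay like $2^{-n}$; homogeneity and left invariance preserve non-reachability of each increment, and the telescoping sum of errors is then $o(|i^{-1}-j^{-1}|)$ for every pair. Your version is probably repairable, but as written the jet you exhibit need not satisfy the Whitney condition on $K$.

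In the sufficiency direction you correctly identify the crux --- a quantitative version of pliability guaranteeing that the neighborhood of $D(a)$ needed to correct a Whitney error of relative size $\eps$ shrinks with $\eps$, uniformly as the base velocity varies --- but you do not prove it, and it does not follow formally from Proposition~\ref{p:pliableiffweaklypliable}, which concerns a single vector and contains no uniformity in the data. This is precisely the paper's notion of \emph{local uniform pliability} (Definition~\ref{def:up}) together with Proposition~\ref{p=>up}: pliability of \emph{all} horizontal vectors implies local uniform pliability of each of them. Its proof is not a routine compactness argument from the definition alone; it uses the dilation and concatenation transformations \ref{T1}--\ref{T5} and Lemma~\ref{brick} to transfer the reachable neighborhood at $V$ to all nearby base velocities $W$, and only afterwards compactness. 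Remark~\ref{zero-not-unif} shows that pliability of a single vector does not imply its local uniform pliability, so the global hypothesis is genuinely needed here. Without this step your glued extension may fail to have a continuous derivative at accumulation points of $K$, so the sufficiency half is a correct plan rather than a complete proof.
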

 
The paper is organized as follows:  in Section~\ref{s:2} we recall some basic facts about Carnot groups and we present the $\mathcal{C}^1_H$-Whitney condition in the light of the Pansu--Rademacher Theorem. In Section~\ref{sec:necessary} we introduce the notion of pliability, we discuss its relation with rigidity, and we show that pliability of $\mathbb{G}$ is necessary 
for the $\mathcal{C}^1_H$ extension property to hold for  $(\mathbb{R},\mathbb{G})$ (Theorem~\ref{thm-necessary}). The proof of this result goes by assuming that a non-pliable horizontal vector exists and using it to provide an explicit construction of a $\mathcal{C}^1_H$ map defined on a closed subset of $\mathbb{R}$ which cannot be extended on $\mathbb{R}$. 
 Section~\ref{sec:sufficient} is devoted to proving that pliability is also a sufficient condition (Theorem~\ref{thm-sufficient}). In Section~\ref{s:5} we use our result to extend some Lusin-like theorem proved recently by G.~Speight for Heisenberg groups \cite{Spe} (see also \cite{zimmerman} for an alternative proof). More precisely, it is proved in \cite{LeDonne-Speight} that an absolutely continuous curve in a group of step 2 coincides on a set of arbitrarily small complement with a $\mathcal{C}^1_H$ curve. We show that this is the case for pliable Carnot groups (Proposition~\ref{our_Lusin_appr}). Finally, in Section~\ref{s:6} we give some criteria for testing the pliability of a Carnot group. We first show that the zero horizontal vector is always pliable (Proposition~\ref{p:zero-pliable}). Then, by applying some results of control theory providing criteria under which  the endpoint mapping is open, we show
that $\mathbb{G}$ is pliable if its step is equal to 2.

\section{Whitney condition in Carnot groups}\label{s:2}

A  nilpotent Lie group $\mathbb{G}$ is said to be a \emph{Carnot group} if it is stratified, in the sense that 
its Lie algebra $\mathfrak{G}$ admits a direct sum decomposition 
$$\mathfrak{G}_1\oplus\cdots\oplus \mathfrak{G}_s,$$
called \emph{stratification}, such that  $[\mathfrak{G}_i,\mathfrak{G}_j]=\mathfrak{G}_{i+j}$ for every $i,j\in \N^*$ with $i+j\leq s$
and $[\mathfrak{G}_i,\mathfrak{G}_j]=\{0\}$ if $i+j>s$. 
We recall that $[\mathfrak{G}_i,\mathfrak{G}_j]$ denotes the linear space spanned by $\{[X,Y]\in\mathfrak{G}\mid X\in \mathfrak{G}_i,\;Y\in \mathfrak{G}_j\}$. 
The subspace $\mathfrak{G}_1$ is called the \emph{horizontal layer} and it is also denoted by $\mathfrak{G}_H$. We say that $s$ is the \emph{step} of $\mathbb{G}$ if $\mathfrak{G}_s\ne\{0\}$. 
The group product of two elements $x_1,x_2\in \mathbb{G}$ is denoted by $x_1\cdot x_2$.
Given $X\in \mathfrak{G}$ we write $\mathrm{ad}_X:\mathfrak{G}\to \mathfrak{G}$ for the operator defined by $\mathrm{ad}_X Y=[X,Y]$.

The Lie algebra $\mathfrak{G}$ can be identified with the family of left-invariant vector fields on $\mathbb{G}$. The exponential is the application that maps a vector $X$ of $\mathfrak{G}$ into the end-point at time $1$ of the integral curve of the vector field $X$ starting from the identity of $\mathbb{G}$, denoted by $0_\mathbb{G}$. That is, if
$$\left\{
\begin{aligned}
\gamma(0)&=0_{\mathbb{G}}\\
\dot{\gamma}(t)&=X\circ\gamma(t)
\end{aligned}
\right.
$$
then $\gamma(1)=\exp(X)$. We also denote by $e^{tX}:\mathbb{G}\to \mathbb{G}$ the flow of the left-invariant vector field $X$ at time $t$. Notice that $e^{tX}(p)=p\cdot \exp(tX)$. Integral curves of left-invariant vector fields are 
said to be \emph{straight curves}.

The Lie group $\mathbb{G}$ is diffeomorphic to $\R^N$ with $N=\sum_{k=1}^s\dim(\mathfrak{G}_k)$. 
A usual way to identify $\mathbb{G}$ and $\R^N$ through a global system of coordinates is to pull-back by $\exp$ the group structure from $\mathbb{G}$ to $\mathfrak{G}$, where it can be expressed by the Baker--Campbell--Hausdorff formula. In this way $\exp$ becomes a mapping of $\mathfrak{G}=\mathbb{G}$ onto itself that is simply the identity.

For any $\lambda\in \R$ we introduce the dilation $\Delta_\lambda:\mathfrak{G}\to \mathfrak{G}$ uniquely characterized by
$$\left\{
\begin{aligned}
\Delta_{\lambda}([X,Y])&=[\Delta_\lambda(X),\Delta_\lambda (Y)]\quad \text{for any }X,Y\in\mathfrak{G},\\
\Delta_\lambda(X)&=\lambda X\quad\text{for any }X \in \mathfrak{G}_1.
\end{aligned}
\right.
$$
Using the decomposition $X=X_1+\cdots+X_s$ with $X_k\in \mathfrak{G}_k$, it holds $\Delta_\lambda(X)=\sum_{k=1}^s \lambda^k X_k$.
For any $\lambda\in \R$ we also define on $\mathbb{G}$ the dilation
$\delta_\lambda=\exp\circ\Delta_\lambda\circ\exp^{-1}$.

Given an absolutely continuous curve $\gamma:[a,b]\to \mathbb{G}$, the velocity $\dot \gamma(t)$, which exists from almost every $t\in [a,b]$, is identified with the element of  $\mathfrak{G}$ whose associated left-invariant vector field, evaluated at $\gamma(t)$, is equal to $\dot \gamma(t)$. An absolutely continuous curve $\gamma$ is said to be \emph{horizontal} if $\dot \gamma(t)\in \mathfrak{G}_H$ for almost every $t$.
For any interval $I$ of $\R$, we denote by  $\mathcal{C}^1_H(I,\mathbb{G})$ the space of all curves 
$\phi\in \mathcal{C}^1(I,\mathbb{G})$ such that  $\dot \phi(t)\in \mathfrak{G}_H$ for every $t\in I$.

Assume that the horizontal layer $\mathfrak{G}_H$ of the algebra is endowed with a  
quadratic norm $\|\cdot\|_{\mathfrak{G}_H}$. The Carnot--Carath\'eodory distance $d_{\mathbb{G}}(p,q)$ between two points $p,q\in \mathbb{G}$ is then defined as the  minimal length of a horizontal curve connecting $p$ and $q$, i.e., 
\begin{align*}
d_{\mathbb{G}}(p,q)=\inf\Big\{\int_a^b\|\dot{\gamma}(t)\|_{\mathfrak{G}_H}dt\mid &\gamma:[a,b]\to\mathbb{G}\mbox{ horizontal},\\
&\gamma(a)=p,\;\gamma(b)=q\Big\}.
\end{align*}
Note that $d_{\mathbb{G}}$ is left-invariant. 
It is known that $d_\mathbb{G}$ provides the same topology as the usual one on $\mathbb{G}$. 
Moreover it is homogeneous, i.e., $d_{\mathbb{G}}(\delta_\lambda p,\delta_\lambda q)=|\lambda|d_{\mathbb{G}}(p,q)$ for any $\lambda\in \R$.

Observe that the Carnot--Carath\'eodory distance depends on the norm $\|\cdot\|_{\mathfrak{G}_H}$ considered on $\mathfrak{G}_H$. However all Carnot--Carath\'eodory distances are in fact metrically equivalent. They are even equivalent with any left-invariant homogeneous distance \cite{FoS} in a very similar way as all norms on a finite-dimensional vector space are equivalent.

Notice that 
$d_{\mathbb{G}}(p,\cdot)$ can 
be seen as the value function of the optimal control problem
\begin{equation*}
\left\{
\begin{aligned}
&\dot\gamma=\sum_{i=1}^{m} u_i X_i(\gamma),\quad (u_1,\dots,u_m)\in\R^m,\\
&\gamma(a)=p,\\
&\int_a^{b}\sqrt{u_1(t)^2+\cdots+u_m(t)^2}dt\to \min,  
\end{aligned}
\right.
\end{equation*}
where $X_1,\dots,X_m$ is a $\|\cdot\|_{\mathfrak{G}_H}$-orthonormal basis of $\mathfrak{G}_H$. 

Finally,  the space $\mathcal{C}_H^1([a,b],\mathbb{G})$  of horizontal curves of class $\mathcal{C}^1$ can be endowed with a natural $\mathcal{C}^1$ metric associated with $(d_{\mathbb{G}},\|\cdot\|_{\mathfrak{G}_H})$ as follows:
the distance between two curves $\gamma_1$ and $\gamma_2$ in $\mathcal{C}_H^1([a,b],\mathbb{G})$ is 
$$\max\left(\sup_{t\in[a,b]}d_{\mathbb{G}}(\gamma_1(t),\gamma_2(t)),\sup_{t\in[a,b]}\|\dot{\gamma}_2(t)-\dot{\gamma}_1(t)\|_{\mathfrak{G}_H}\right).$$

In the following, we will write $\|\dot{\gamma}_2-\dot{\gamma}_1\|_{\infty,\mathfrak{G}_H}$ to denote the quantity $\sup_{t\in[a,b]}\|\dot{\gamma}_2(t)-\dot{\gamma}_1(t)\|_{\mathfrak{G}_H}$.

\subsection{Whitney condition}

A \emph{homogeneous homomorphism} between two Carnot groups $\mathbb{G}_1$ and $\mathbb{G}_2$ is a group morphism $L:\mathbb{G}_1\to\mathbb{G}_2$ with $L\circ\delta_\lambda^{\mathbb{G}_1}=\delta_\lambda^{\mathbb{G}_2}\circ L$ for any $\lambda\in\R$. Moreover $L$ is a homogeneous homomorphism if and only if $\exp^{-1}_{\mathbb{G}_2}\circ L\circ\exp_{\mathbb{G}_1}$ is a homogeneous Lie algebra morphism. It is in particular a linear map on $\mathbb{G}_1$ identified with $\mathfrak{G}_{\mathbb{G}_1}$. The first layer is mapped on the first layer so that a homogeneous homomorphism from $\R$ to $\mathbb{G}_2$ has the form $L(t)=\exp_{\mathbb{G}_2}(tX)$, where $X\in \mathfrak{G}^{\mathbb{G}_2}_{H}$.

\begin{prop}[Pansu--Rademacher Theorem]\label{PR_them}
Let $f$ be a locally Lipschitz map from an open subset $U$ of $ \mathbb{G}_1$ into $\mathbb{G}_2$. Then for almost every $p\in U$, there exists a homogeneous homomorphism $L_p$ such that 
\begin{align}\label{deriv}
\mathbb{G}_1\ni q\mapsto \delta^{\mathbb{G}_2}_{1/r}\left(f(p)^{-1}\cdot f(p\cdot \delta^{\mathbb{G}_1}_r(q))\right)
\end{align}
tends to $L_p$ uniformly on every compact set $K\subset \mathbb{G}_1$ as $r$ goes to zero. 
\end{prop}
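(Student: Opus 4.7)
The plan is to reduce the full statement to the existence and gluing of horizontal directional Pansu derivatives, following the skeleton of Pansu's classical proof. The argument decomposes into three tasks: first, produce the directional derivatives in all horizontal directions on a full-measure set; second, build from them a candidate homogeneous homomorphism $L_p$; third, upgrade pointwise information to the uniform convergence on compact sets required by (\ref{deriv}).

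For the first task, fix a $\|\cdot\|_{\mathfrak{G}_H}$-orthonormal basis $X_1,\ldots,X_m$ of $\mathfrak{G}_H^{\mathbb{G}_1}$. I would slice $U$ by the left-cosets of the one-parameter subgroup generated by $X_i$ and apply Fubini together with the classical Rademacher theorem to the Lipschitz curves $t\mapsto f(p\cdot\exp_{\mathbb{G}_1}(tX_i))$, read in an exponential chart of $\mathbb{G}_2$. This yields, for each $i$, a full-measure set $E_i\subset U$ on which the Pansu directional derivative
$$D_{X_i}f(p):=\lim_{r\to 0}\delta^{\mathbb{G}_2}_{1/r}\bigl(f(p)^{-1}\cdot f(p\cdot \exp_{\mathbb{G}_1}(rX_i))\bigr)$$
exists. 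On the full-measure set $E=\bigcap_i E_i$ a scaling argument combined with the local Lipschitz bound shows that $D_{X_i}f(p)=\exp_{\mathbb{G}_2}(Y_i(p))$ for some $Y_i(p)\in \mathfrak{G}_H^{\mathbb{G}_2}$, and that $s\mapsto D_{sX_i}f(p)=\delta_s^{\mathbb{G}_2}(D_{X_i}f(p))$.

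For the second task, I would define a candidate differential $L_p:\mathbb{G}_1\to \mathbb{G}_2$ at each $p\in E$ by linearly extending $X_i\mapsto Y_i(p)$ to $\mathfrak{G}_H^{\mathbb{G}_1}$ and then declaring $L_p$ to be the unique homogeneous homomorphism whose horizontal restriction is this linear map (homogeneity and the stratified structure force the values on higher strata through iterated brackets). The family of rescaled maps $F_r(q):=\delta_{1/r}^{\mathbb{G}_2}(f(p)^{-1}\cdot f(p\cdot \delta_r^{\mathbb{G}_1}(q)))$ is equi-Lipschitz on compact sets: the local Lipschitz bound for $f$ combined with the homogeneity of $d_{\mathbb{G}_2}$ gives a Lipschitz constant for $F_r$ independent of $r$. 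Arzel\`a--Ascoli then extracts subsequential uniform limits on compact sets, and by construction any such limit agrees with $L_p$ on each one-parameter subgroup $\{\exp_{\mathbb{G}_1}(tX_i)\}_t$.

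The main obstacle, and the heart of Pansu's argument, is to verify that every subsequential uniform limit is a group homomorphism, which by the previous paragraph forces it to coincide everywhere with $L_p$. Using that $\delta_r^{\mathbb{G}_1}$ is a group automorphism one writes
$$F_r(q_1\cdot q_2)=\delta_{1/r}^{\mathbb{G}_2}\bigl(f(p)^{-1}\cdot f(p\cdot \delta_r^{\mathbb{G}_1}(q_1)\cdot \delta_r^{\mathbb{G}_1}(q_2))\bigr)$$
and inserts the intermediate point $p\cdot \delta_r^{\mathbb{G}_1}(q_1)$ to split the increment in two. The delicate step is to show that the resulting second factor differs from $F_r(q_2)$ only by commutator corrections whose $\mathbb{G}_2$-size, controlled via the equi-Lipschitz bound and the Baker--Campbell--Hausdorff expansion on both groups, vanishes as $r\to 0$. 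Once the homomorphism property is established, the subsequential limit is forced to equal $L_p$, the whole family $F_r$ converges uniformly on compact sets, and Proposition~\ref{PR_them} follows.
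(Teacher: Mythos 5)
The paper itself offers no proof of this proposition: it is quoted as the classical Pansu--Rademacher theorem and only its statement is used (the paper's sole related argument is the one-line derivation of Proposition~\ref{Taylor} from Magnani's mean value inequality). So your sketch can only be measured against Pansu's original argument, whose skeleton you reproduce correctly: directional derivatives via Fubini and one-dimensional Rademacher, a candidate homomorphism, equi-Lipschitz rescalings plus Arzel\`a--Ascoli, and identification of subsequential limits.

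The genuine gap is in the step you yourself flag as the heart of the matter. After inserting the intermediate point, the second factor is
$$\delta^{\mathbb{G}_2}_{1/r}\Bigl(f\bigl(p\cdot\delta^{\mathbb{G}_1}_r(q_1)\bigr)^{-1}\cdot f\bigl(p\cdot\delta^{\mathbb{G}_1}_r(q_1)\cdot\delta^{\mathbb{G}_1}_r(q_2)\bigr)\Bigr),$$
i.e.\ the rescaled increment of $f$ in the direction $q_2$ based at the \emph{moving} point $p\cdot\delta^{\mathbb{G}_1}_r(q_1)$, not at $p$. The equi-Lipschitz bound and Baker--Campbell--Hausdorff only confine this quantity to a fixed compact set; they cannot show it converges to the same limit as $F_r(q_2)$, because directional differentiability at $p$ says nothing about increments based at nearby points that vary with $r$. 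This is not a ``commutator correction'' issue but a measure-theoretic one, and closing it is exactly where Pansu's real work lies: one makes the convergence of the directional difference quotients uniform on a set $A$ of nearly full measure (Egorov), uses that almost every $p$ is a density point of $A$ so the moving basepoints can be replaced by points of $A$ at cost $o(r)$, and only then obtains the homomorphism property of the blow-up limits. Without this device (or an equivalent one, such as the maximal-function estimates in later treatments) the theorem does not follow from what you have written. A secondary issue: you define $L_p$ as ``the unique homogeneous homomorphism whose horizontal restriction is $X_i\mapsto Y_i(p)$'', but an arbitrary linear map between horizontal layers need not extend to a morphism of stratified Lie algebras; the existence of such an extension is itself a consequence of the limit being a homomorphism, so as stated the construction is circular.
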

Note that in Proposition \ref{PR_them} the map $L_p$ is uniquely determined. It is called the \emph{Pansu derivative} of $f$ at $p$ and denoted by $Df_p$.

We denote by $\mathcal{C}^1_H(\mathbb{G}_1,\mathbb{G}_2)$ the space of functions $f$ such that \eqref{deriv} holds at every point $p\in\mathbb{G}_1$ and $p\mapsto Df_p$ is continuous for the usual topology. For $\mathbb{G}_1=\R$ this coincides with the definition of $\mathcal{C}^1_H(I,\mathbb{G}_2)$ given earlier. We have the following.
\begin{prop}[Taylor expansion]\label{Taylor}
Let $f\in \mathcal{C}^1_H(\mathbb{G}_1,\mathbb{G}_2)$ where $\mathbb{G}_1$ and $\mathbb{G}_2$ are Carnot groups. Let $K\subset \mathbb{G}_1$ be compact. Then there exists a function $\omega$ from $\R^+$ to $\R^+$ with $\omega(t)=o(t)$ at $0^+$ such that for any $p,q\in K$,
$$d_{\mathbb{G}_2}\left(f(q),f(p)\cdot Df_p(p^{-1}\cdot q)\right)\le \omega(d_{\mathbb{G}_1}(p,q)),$$
where $Df_p$ is the Pansu derivative.
\end{prop}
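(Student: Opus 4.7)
The plan is to upgrade the pointwise Pansu differentiability coming from $f\in\mathcal{C}^1_H(\mathbb{G}_1,\mathbb{G}_2)$ to a uniform little-$o$ estimate on $K$, exploiting the continuity of $p\mapsto Df_p$ together with the compactness of $K$. I would argue by contradiction. If the conclusion fails, there exist $\eps>0$ and sequences $p_n,q_n\in K$ with $r_n:=d_{\mathbb{G}_1}(p_n,q_n)\to 0$ such that
$$d_{\mathbb{G}_2}\bigl(f(q_n),\,f(p_n)\cdot Df_{p_n}(p_n^{-1}\cdot q_n)\bigr)>\eps\,r_n.$$
Up to a subsequence, $p_n\to p^*\in K$ and, by left-invariance and homogeneity of $d_{\mathbb{G}_2}$ combined with the fact that $Df_{p_n}$ commutes with dilations, the rescaled direction $h'_n:=\delta^{\mathbb{G}_1}_{1/r_n}(p_n^{-1}\cdot q_n)$, which lies on the unit sphere of $\mathbb{G}_1$, converges to some $h^*$ and the displayed inequality becomes $d_{\mathbb{G}_2}(\delta^{\mathbb{G}_2}_{1/r_n}(f(p_n)^{-1}\cdot f(q_n)),\,Df_{p_n}(h'_n))>\eps$. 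Continuity of $(p,h)\mapsto Df_p(h)$ yields $Df_{p_n}(h'_n)\to Df_{p^*}(h^*)$, so the goal reduces to showing that $\delta^{\mathbb{G}_2}_{1/r_n}(f(p_n)^{-1}\cdot f(q_n))$ tends to the same limit.

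Rather than applying the pointwise Pansu definition at the fixed base point $p^*$ (which is delicate because $d_{\mathbb{G}_1}(p_n,p^*)$ need not be comparable to $r_n$), I would work directly along horizontal connecting curves. Choose unit-speed horizontal curves $\gamma_n\colon[0,L_n]\to\mathbb{G}_1$ with $\gamma_n(0)=p_n$, $\gamma_n(L_n)=q_n$ and $L_n/r_n\to 1$. By the chain rule for $\mathcal{C}^1_H$ maps, $\alpha_n:=f\circ\gamma_n$ is horizontal in $\mathbb{G}_2$ with velocity $Df_{\gamma_n(t)}(\dot\gamma_n(t))$. Compare it with the \emph{frozen} curve $\beta_n(t):=f(p_n)\cdot Df_{p_n}(p_n^{-1}\cdot\gamma_n(t))$, which is horizontal with velocity $Df_{p_n}(\dot\gamma_n(t))$ and satisfies $\alpha_n(0)=\beta_n(0)=f(p_n)$ and $\beta_n(L_n)=f(p_n)\cdot Df_{p_n}(p_n^{-1}\cdot q_n)$. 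Since $Df$ is continuous on $K$, hence uniformly continuous, and $\gamma_n(t)$ stays within distance $L_n$ of $p_n$, the difference $\|\dot\alpha_n(t)-\dot\beta_n(t)\|_{\mathfrak{G}^{\mathbb{G}_2}_H}$ tends to $0$ uniformly in $t$.

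It then remains to estimate the Carnot--Carath\'eodory distance between the endpoints of two horizontal curves sharing a starting point and having uniformly bounded, uniformly close velocities on an interval of vanishing length $L_n$. This is the main technical step: in a non-commutative group one cannot merely integrate the velocity difference, one also has to control the commutator corrections produced by the Baker--Campbell--Hausdorff formula. Those corrections are of order $L_n^k$ for $k\geq 2$ times positive powers of bounded quantities and so are $o(L_n)$, while the first-order contribution is bounded by $L_n\cdot\|\dot\alpha_n-\dot\beta_n\|_\infty=o(L_n)$. The resulting bound $d_{\mathbb{G}_2}(\alpha_n(L_n),\beta_n(L_n))=o(L_n)=o(r_n)$ contradicts the initial inequality, and thereby produces a uniform modulus $\omega$ on $K$ by the compactness argument above.
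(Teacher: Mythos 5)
Your route is genuinely different from the paper's: the paper disposes of this proposition in one line by invoking Magnani's ``mean value inequality'' \cite[Theorem~1.2]{Mag}, whereas you give a self-contained compactness-and-contradiction argument whose technical core is the comparison of $\alpha_n=f\circ\gamma_n$ with the frozen curve $\beta_n(t)=f(p_n)\cdot Df_{p_n}(p_n^{-1}\cdot\gamma_n(t))$ along a near-minimizing horizontal connector. The skeleton is sound, and the reduction to comparing endpoints of two horizontal curves issuing from the same point with uniformly bounded and uniformly close velocities is exactly the statement that Magnani's inequality encapsulates. Two points you should make explicit: the curves $\gamma_n$ with $L_n/r_n\to1$ can in general only be taken Lipschitz, so the chain rule $\dot\alpha_n(t)=Df_{\gamma_n(t)}(\dot\gamma_n(t))$ holds only almost everywhere and must itself be derived from the locally uniform convergence built into the definition of the Pansu derivative; this is harmless but not free.

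The one genuine gap is in the justification of the key estimate. You claim the commutator corrections ``are of order $L_n^k$ for $k\geq 2$ times positive powers of bounded quantities and so are $o(L_n)$.'' This conflates the size of a coordinate with its contribution to the Carnot--Carath\'eodory distance: by homogeneity, a displacement of magnitude $CL_n^k$ in the $k$-th layer contributes on the order of $C^{1/k}L_n$ to $d_{\mathbb{G}_2}$, which is $O(L_n)$ but not $o(L_n)$ when $C$ is merely bounded, and an $O(L_n)$ bound with a large constant does not contradict the assumed lower bound $\eps\,r_n$. What actually saves the argument is that every correction term vanishes when the two controls coincide, hence carries at least one factor of $\delta_n:=\|\dot\alpha_n-\dot\beta_n\|_{\infty,\mathfrak{G}_H}\to0$; the $k$-th layer component of $\alpha_n(L_n)^{-1}\cdot\beta_n(L_n)$ is then $O(\delta_n M^{k-1}L_n^k)$ and contributes $O\bigl(L_n(\delta_n M^{k-1})^{1/k}\bigr)=L_n\cdot o(1)$ to the distance. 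You need to identify $\delta_n\to0$, not $L_n\to0$, as the reason the higher-order corrections are negligible; with that correction (or by simply quoting Magnani's result at this point, as the paper does) the proof goes through.
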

\begin{proof}
This is a direct consequence the ``mean value inequality''  by Magnani contained in  \cite[Theorem~1.2]{Mag}.
\end{proof}

The above proposition hints at the suitable formulation of the $\mathcal{C}^1$-Whitney condition for Carnot groups. This generalization already appeared in the literature in the paper \cite{VP} by Vodop'yanov and Pupyshev.
\begin{defi}[$\mathcal{C}^1_H$-Whitney condition]

\begin{itemize}
\item
Let $K$ be a compact subset of $\mathbb{G}_1$ and consider 
$f:K\to \mathbb{G}_2$ and a map $L$ which associates with any $p\in K$ 
a homogeneous group homomorphism  $L(p)$. We say that the \emph{$\mathcal{C}^1_H$-Whitney condition holds} for $(f,L)$ on $K$ if $L$ is continuous and there exists a function $\omega$ from $\R^+$ to $\R^+$ with $\omega(t)=o(t)$ at $0^+$ such that for any $p,q\in K$,
\begin{align}\label{Whitney_cond}
d_{\mathbb{G}_2}\left(f(q),f(p)\cdot L(p)(p^{-1}\cdot q)\right)\le \omega(d_{\mathbb{G}_1}(p,q)).
\end{align}
\item
Let $K_0$ be a closed set of $\mathbb{G}_1$, and $f:K_0\to \mathbb{G}_2$, and $L$ such that $K_0\ni p\mapsto L(p)$ is continuous. We say that the \emph{$\mathcal{C}^1_H$-Whitney condition holds} for $(f,L)$ on $K_0$ if for any compact set $K\subset K_0$ it holds 
for the restriction of $(f,L)$ to $K$. 
\end{itemize}
\end{defi}
Of course, according to Proposition \ref{Taylor}, if $f\in \mathcal{C}^1_H(\mathbb{G}_1,\mathbb{G}_2)$, then the restriction of $(f,Df)$ to any closed $K_0$ satisfies the $\mathcal{C}^1_H$-Whitney condition on $K_0$. 

In this paper we focus on the case $\mathbb{G}_1=\R$. The condition on a compact set $K$ reads $r_{K,\eta}\to 0$ as $\eta\to 0$, where
 \begin{align}
 r_{K,\eta}=\sup_{\tau,t\in K,\;0<|\tau-t|<\eta}\frac{d_{\mathbb{G}_2}(f(t),f(\tau)\cdot \exp[(t-\tau)X(\tau)])}{|\tau-t|},
 \end{align}
because for every $\tau\in\R$ one has $[L(\tau)](h)=\exp(hX(\tau))$ for some $X(\tau)\in \mathfrak{G}_H^{\mathbb{G}_2}$ and every $h\in \R$. With a slight abuse of terminology, we say that the $\mathcal{C}^1_H$-Whitney condition holds for $(f,X)$ on $K$. 

 In the classical setting the Whitney condition is equivalent to the existence of a $\mathcal{C}^1$ map $\bar f:\R^{n_1}\to \R^{n_2}$ such that $\bar{f}$ and $D\bar{f}$ have respectively restrictions $f$ and $L$ on $K$. This property is usually known as the \emph{$\mathcal{C}^1$-Whitney extension theorem} or simply \emph{Whitney extension theorem} (as for instance in \cite{EvGa}), even though the original theorem by Whitney is more general and in particular includes higher order extensions \cite{W2,W} and considers the extension $f\to\bar{f}$ as a linear operator. This theorem is of broad use in analysis and is still the subject of dedicated research. 
 See for instance \cite{BS,Fef,FIG} and the references therein.

\begin{defi}\label{def-W-condition}
We say that the pair  $({\mathbb{G}_1},{\mathbb{G}_2})$ \emph{has the $\mathcal{C}^1_H$ extension property} if 
for every $(f,L)$ satisfying the $\mathcal{C}_H^1$-Whitney condition on some closed set $K_0$ there exists $\bar{f}\in \mathcal{C}^1_H({\mathbb{G}_1},{\mathbb{G}_2})$ which extends $f$ on $\mathbb{G}_1$ and such that $D\bar{f}_p=L(p)$ for every $p\in K_0$. 
\end{defi}

We now state the $\mathcal{C}^1_H$-extension theorem that Franchi, Serapioni, and Serra Cassano proved in \cite[Theorem 2.14]{FSS2}. It has been generalised by Vodop'yanov and Pupyshev in \cite{VP,VP0} in a form closer to the original Whitney's result including higher order extensions and the linearity of the operator $f\mapsto \bar{f}$.
\begin{theo}[Franchi, Serapioni, Serra Cassano]
For any Carnot group $\mathbb{G}_1$ and any $d\in\N$, the pair
$(\mathbb{G}_1,\R^d)$ has the $\mathcal{C}^1_H$ extension property.
\end{theo}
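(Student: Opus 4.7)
The target being abelian lets me first reduce to $d=1$: the $\mathcal{C}^1_H$-Whitney condition and the $\mathcal{C}^1_H$ regularity both decompose component by component, so it suffices to extend a single scalar function $f\colon K_0\to\R$ equipped with a continuous field $\R^N\ni p\mapsto L(p)\in\mathrm{Hom}(\mathbb{G}_1,\R)$ of homogeneous homomorphisms. Because a homogeneous homomorphism from a Carnot group to $\R$ is a linear form on $\mathfrak{G}_{\mathbb{G}_1}$ which annihilates every stratum $\mathfrak{G}_k$ for $k\ge 2$, $L(p)$ is completely encoded by a continuous $\ell(p)\in\mathfrak{G}_H^*$, and the Whitney inequality~\eqref{Whitney_cond} rewrites in $\R$ as
$$\bigl|f(q)-f(p)-\langle \ell(p),\pi_H(\exp_{\mathbb{G}_1}^{-1}(p^{-1}\cdot q))\rangle\bigr|\le \omega\bigl(d_{\mathbb{G}_1}(p,q)\bigr).$$

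The plan is to run the classical Whitney construction on the open set $U=\mathbb{G}_1\setminus K_0$. Since $(\mathbb{G}_1,d_{\mathbb{G}_1})$ is a doubling metric space, a standard covering lemma produces a countable family of Carnot--Carath\'eodory balls $\{B_i=B(x_i,r_i)\}$ together with the enlarged balls $\{B_i^*=B(x_i,2r_i)\}$ such that $U=\bigcup_i B_i$, the family $\{B_i^*\}$ has bounded overlap, and $r_i$ is comparable to $d_{\mathbb{G}_1}(x_i,K_0)$. For each $i$ I fix a point $p_i\in K_0$ with $d_{\mathbb{G}_1}(x_i,p_i)\lesssim r_i$ and a $\mathcal{C}^\infty_H$ partition of unity $\{\varphi_i\}$ subordinate to $\{B_i^*\}$ satisfying $\|X\varphi_i\|_\infty\lesssim 1/r_i$ for every $X$ in an orthonormal horizontal frame (this is built by smoothing characteristic functions of $B_i$ against a left-invariant horizontal mollifier and normalizing). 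Define
$$\bar f(x)=\begin{cases} f(x), & x\in K_0, \\ \displaystyle\sum_i\varphi_i(x)\bigl(f(p_i)+\langle \ell(p_i),\pi_H(\exp_{\mathbb{G}_1}^{-1}(p_i^{-1}\cdot x))\rangle\bigr), & x\in U.\end{cases}$$

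On $U$ the function $\bar f$ is a locally finite sum of smooth horizontal affine maps, hence smooth and in particular $\mathcal{C}^1_H$. The substantive work is to verify at each $p\in K_0$ the three items: continuity of $\bar f$ at $p$, Pansu differentiability of $\bar f$ at $p$ with derivative $L(p)$, and continuity of the derivative $x\mapsto D\bar f_x$ at $p$. All three follow from the same computation: for $x\in U$ close to $p$ one writes
$$\bar f(x)-f(p)-\langle \ell(p),\pi_H(\exp_{\mathbb{G}_1}^{-1}(p^{-1}\cdot x))\rangle=\sum_i\varphi_i(x)\,E_i(x),$$
where $E_i(x)$ is the difference between the affine model based at $p_i$ and the one based at $p$ evaluated at $x$. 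Splitting $E_i$ as
$$E_i(x)=\bigl[f(p_i)-f(p)-\langle \ell(p),\pi_H(\exp^{-1}(p^{-1}\cdot p_i))\rangle\bigr]+\bigl[\langle \ell(p_i)-\ell(p),\pi_H(\exp^{-1}(p_i^{-1}\cdot x))\rangle\bigr]+R_i,$$
the first bracket is $o(d_{\mathbb{G}_1}(p,p_i))$ by the Whitney hypothesis, the second is bounded by $\|\ell(p_i)-\ell(p)\|\cdot d_{\mathbb{G}_1}(p_i,x)$ and goes to zero by continuity of $\ell$, and the remainder $R_i$ from reconciling the two basepoints is controlled using the homogeneity of $L(p)$ and the Baker--Campbell--Hausdorff formula on the nilpotent coordinates. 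For $x\in B_i^*$ the relevant geometric estimate is $d_{\mathbb{G}_1}(p,p_i)\lesssim d_{\mathbb{G}_1}(p,x)$ and $d_{\mathbb{G}_1}(p_i,x)\lesssim d_{\mathbb{G}_1}(p,x)$, and thanks to the bounded overlap of $\{B_i^*\}$ the sum stays $o(d_{\mathbb{G}_1}(p,x))$.

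The differentiation of $\bar f$ along a horizontal direction $X$ on $U$ produces $X\bar f(x)=\sum_i (X\varphi_i)(x) E_i(x)+\sum_i \varphi_i(x)\langle \ell(p_i),X\rangle$ because $X$ applied to $\pi_H(\exp^{-1}(p_i^{-1}\cdot\,\cdot))$ at $x$ is exactly the horizontal component of $X$; since $\sum_i X\varphi_i\equiv 0$, one can insert a base reference term and bound $|X\bar f(x)-\langle \ell(p),X\rangle|$ by $\sup_i|\langle \ell(p_i)-\ell(p),X\rangle|+C\sum_i|X\varphi_i(x)||E_i(x)|$, which by the above estimates tends to $0$ as $x\to p$. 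The main obstacle I anticipate is the technical bookkeeping for the remainder $R_i$: the affine model $y\mapsto \langle \ell(p),\pi_H(\exp^{-1}(p^{-1}\cdot y))\rangle$ is \emph{linear in the BCH coordinates only through its horizontal part}, so changing the basepoint from $p$ to $p_i$ introduces higher-stratum corrections that must be shown to be $o(d_{\mathbb{G}_1}(p,x))$; this is precisely where the stratification and the homogeneous structure of the dilations $\delta_\lambda$ are essential, and it is the step that consumes most of the argument in \cite{FSS2,VP}.
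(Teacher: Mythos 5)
Your proposal is correct and follows essentially the same route as the proof the paper cites, namely Franchi--Serapioni--Serra Cassano's adaptation of the classical Evans--Gariepy Whitney covering construction (Carnot--Carath\'eodory Whitney balls, a partition of unity with horizontal derivative bounds $1/r_i$, and gluing of affine models based at nearby points of $K_0$). The only remark worth adding is that the remainder $R_i$ you single out as the main obstacle actually vanishes identically: since $\ell(p)$ only sees the first stratum and the horizontal projection of $\exp^{-1}$ is additive under the group law (the Baker--Campbell--Hausdorff corrections live in the higher strata), the basepoint-change identity is exact, so the argument closes with only the first two brackets of your splitting.
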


The proof proposed by  Franchi, Serapioni, and Serra Cassano is established for Carnot groups of step two only, but is identical for general Carnot groups. It is inspired by the proof in \cite{EvGa}, that corresponds to the special case ${\mathbb{G}_1}=\R^{n_1}$ for $n_1\geq 1$.

Let us mention 
an 
example  from the literature
of non-extension with $\mathbb{G}_1\neq \R$. This remarkable fact was explained to us by A. Kozhevnikov. 
\begin{example}\label{ex:Koz}
If $\mathbb{G}_1$ and $\mathbb{G}_2$ are the ultrarigid Carnot groups of dimension 17 and 16 respectively, presented in \cite{DOW} and analysed in Lemma A.2.1 of \cite{Koz}, 
 one can construct an example $(f,L)$ satisfying the $\mathcal{C}^1_H$-Whitney condition on some compact $K$ without any possible extension $(\bar{f},D\bar{f})$ on $\mathbb{G}_1$. For this, one exploits the rarity of $\mathcal{C}^1_H$ maps of maximal rank in ultrarigid Carnot groups. The definition of ultrarigid from \cite[Definition 3.1]{DOW} is that all quasimorphisms are Carnot similitudes, i.e., a composition of dilations and left-translations. We do not use here directly the definition of ultrarigid groups but just the result stated in Lemma A.2.1 of \cite{Koz} for $\mathbb{G}_1$ and $\mathbb{G}_2$. Concretely, let us set
$$K=\{(p_1,\ldots,p_{17})\in\mathbb{G}_1\mid p_2=\cdots =p_{16}=0,\,p_{1}\in[-1,1],\,p_{17}=p_{1}\}.$$  
Let the map $f$ be constantly equal to $0$ on $K$ and $L$ be the constant  projection $\Lambda:\mathbb{G}_1\ni (q_1,\ldots,q_{17})\mapsto(q_1,\ldots,q_{16})\in \mathbb{G}_2$. Lemma A.2.1 in \cite{Koz} applied at the point $0_{\mathbb{G}_1}$ implies that the only possible extension of $f$ is the projection $L(0)=\Lambda$. But this map vanishes only on $\{p\in\mathbb{G}_1\mid p_{1}=\cdots=p_{16}=0\}$, which does not contain $K$. It remains us to prove that Whitney's condition holds. In fact for two points $p=(x,0_{\R^{15}},x)$ and $q=(y,0_{\R^{15}},y)$ in $K$, we look at the distance from $f(x)=0_{\mathbb{G}_2}$ to 
$$f(p)\cdot L(0)(p^{-1}\cdot q)= L(0)((x,0_{\R^{15}},x)^{-1}\cdot (y,0_{\R^{15}},y))=(y-x,0_{\R^{15}})$$
 on the one side and from $p$ to $q$ on the other side. 
The first one is $|y-x|$, up to a multiplicative constant, and when $|y-x|$ goes to zero the second one is $c|y-x|^{1/3}$ for some constant $c>0$. This proves the $\mathcal{C}^1_H$-Whitney condition for $(f,L)$ on $K$.
\end{example}

In the present paper we provide examples of ordered pairs $(\mathbb{G}_1,\mathbb{G}_2)$ 
with $\mathbb{G}_1=\R$   such that the $\mathcal{C}^1_H$ extension property does or does not hold, depending on the geometry of $\mathbb{G}_2$. We do not address the problem of Whitney extensions for  orders larger than $1$. 
A preliminary step for considering higher-order extensions would be to 
provide a suitable  Taylor expansion for $\mathcal{C}^m_H$-functions from $\R$ to $\mathbb{G}_2$, in the spirit of what recalled for $m=1$ in Proposition~\ref{Taylor}.

Let us conclude the section by assuming that the $\mathcal{C}_H^1$ extension property holds for some ordered pair $(\mathbb{G}_1,\mathbb{G}_2)$ of Carnot groups and by showing how to deduce it for other pairs. We describe here below three such possible implications. 

(1) Let $\mathbb{S}_1$ be a homogeneous subgroup of $\mathbb{G}_1$ that admits a complementary group $\mathbb{K}$ in the sense of \cite[Section 4.1.2]{serra-cassano-notes}: both $\mathbb{S}_1$ and $\mathbb{K}$ are homogeneous Lie groups and the intersection is reduced to $\{0\}$. Assume moreover that $\mathbb{S}_1$ is a Carnot group and $\mathbb{K}$ is normal, so that one can define canonically a projection $\pi:\mathbb{G}_1\to \mathbb{S}_1$ that is a homogeneous homomorphism. Moreover $\pi$ is Lipschitz continuous (see \cite[Proposition 4.13]{serra-cassano-notes}). For the rest of the section, we say that $\mathbb{S}_1$ is an \emph{appropriate Carnot subgroup of $\mathbb{G}_1$}. It can be easily proved that $(\mathbb{S}_1,\mathbb{G}_2)$ has the $\mathcal{C}^1_H$ extension property. In particular, according to  \cite[Example 4.6]{serra-cassano-notes}, for every $k\leq \dim (\mathfrak{G}^{\mathbb{G}_1}_H)$ the vector space $\R^k$ is  an appropriate Carnot subgroup of $\mathbb{G}_1$. Therefore $(\R^k,\mathbb{G}_2)$ has the $\mathcal{C}^1_H$ extension property.

(2) Assume now that $\mathbb{S}_2$ is an appropriate Carnot subgroup of $\mathbb{G}_2$. 
Using the Lipschitz continuity of the projection $\pi:\mathbb{G}_2\to \mathbb{S}_2$,
one easily deduces from the definition of $\mathcal{C}_H^1$-Whitney condition  that $(\mathbb{G}_1,\mathbb{S}_2)$ has the $\mathcal{C}^1_H$ extension property.

(3) Finally assume that $(\mathbb{G}_1,\mathbb{G}'_2)$ has the $\mathcal{C}^1_H$ extension property, where $\mathbb{G}'_2$ is a Carnot group. Then one checks without difficulty that the same is true for $(\mathbb{G}_1,\mathbb{G}_2\times\mathbb{G}'_2)$.

As a consequence of Theorem~\ref{thm-necessary-and-sufficient}, we can use 
these three implications to 
infer pliability statements. Namely, a Carnot group $\mathbb{G}$ is pliable --- i) if $(\mathbb{G}_0,\mathbb{G})$ has the $\mathcal{C}^1_H$ extension property for some Carnot group $\mathbb{G}_0$ of positive dimension, ii) if $\mathbb{G}$ is the appropriate Carnot subgroup of a pliable Carnot group, iii) if $\mathbb{G}$ is the product of two pliable Carnot groups.

\section{Rigidity, Necessary condition for the $\mathcal{C}^1_H$ extension property}\label{sec:necessary}

Let us first adapt to the case of horizontal curves on Carnot groups the notion of rigid curve introduced by Bryant and Hsu in \cite{BH}. 
We will show in the following that the existence of rigid curves in a Carnot group 
$\mathbb{G}$
can be used to identify 
obstructions
to the validity of the $\mathcal{C}^1_H$ extension property for $(\mathbb{R},\mathbb{G})$.

\begin{defi}[Bryant, Hsu]
Let 
$\gamma\in \mathcal{C}^1_H([a,b],\mathbb{G})$.
 We say that \emph{$\gamma$ is rigid} 
 if there exists a  neighborhood $\mathcal{V}$ of $\gamma$  in the space $\mathcal{C}^1_H([a,b],\mathbb{G})$ 
 such that if $\beta\in \mathcal{V}$ and  $\gamma(a)=\beta(a)$, $\gamma(b)=\beta(b)$ then $\beta$ is a reparametrization 
of $\gamma$.

A vector $X\in \mathfrak{G}_H
$ is said to be \emph{rigid} if the curve $[0,1]\ni t\mapsto \exp(t X)$ is rigid.
\end{defi}

A celebrated existence result of rigid curves for general sub-Rieman\-nian manifolds has been obtained by 
Bryant and Hsu in \cite{BH} and further improved in \cite{LiuSussmann} and \cite{agrachev-sarychev-rigidity}. 
Examples of Carnot groups with rigid curves have been illustrated in  \cite{GoleKaridi} and extended in  \cite{HuangYangExtremals}, where it is shown that, for any $N\ge 6$ 
there 
exists a Carnot group of topological dimension $N$ 
having rigid curves. 
Nevertheless, such curves need not be straight. Actually, the construction proposed in \cite{HuangYangExtremals}
produces curves which are necessarily not straight. 

Following \cite{agrachev-sarychev-rigidity} (see also \cite{montgomery-survey}), 
and focusing on rigid straight curves in Carnot groups, 
we can formulate Theorem~\ref{BrHs} below.
In order to state it, let $\pi:T^*\mathbb{G}\to \mathbb{G}$ be  the canonical projection and recall that a curve $p:I\to T^*\mathbb{G}$ is said to be an \emph{abnormal path} 
if $\pi\circ p:I\to\mathbb{G}$ is a horizontal curve, $p(t)\ne 0$ and 
$p(t) X=0$
for every $t\in I$ and $X\in \mathfrak{G}_H$, and, moreover, 
for every $Y\in \mathfrak{G}$ and almost every $t\in I$,  
\begin{equation}
\frac{d}{dt}p(t)Y=p(t)[Z(t),Y],\label{eq-abn}
\end{equation} 
where $Z(t)=\frac{d}{dt} \pi\circ p(t)\in \mathfrak{G}_H$.

\begin{theo}\label{BrHs}
Let $X\in \mathfrak{G}_H$ 
and assume that $p:[0,1]\to T^*\mathbb{G}$ is an abnormal path with $\pi\circ p(t)=\exp(tX)$.

If $t\mapsto \exp(tX)$ is rigid, then $p(t) [V,W]=0$ for every $V,W\in \mathfrak{G}_H$ and every $t\in[0,1]$. 
Moreover, denoting by $Q_{p(t)}$ the quadratic form $Q_{p(t)}(V)=p(t) [V,[X,V]]$ defined on $\{V\in\mathfrak{G}_H\mid V\perp X\}$, we have that $Q_{p(t)}\geq 0$  for every $t\in[0,1]$.

Conversely, if $p(t) [V,W]=0$ for every $V,W\in \mathfrak{G}_H$  and every $t\in[0,1]$ and 
 $Q_{p(t)}> 0$  for  every $t\in[0,1]$ then $t\mapsto \exp(tX)$ is rigid.
\end{theo}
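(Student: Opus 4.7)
The plan is to specialize to the straight reference curve $t\mapsto \exp(tX)$ the general second-order necessary and sufficient conditions for rigidity of abnormal extremals developed by Agrachev and Sarychev in \cite{agrachev-sarychev-rigidity}. First I would set up a control-theoretic framework: fix a basis $X_1,\dots,X_m$ of $\mathfrak{G}_H$ with $X_1$ collinear to $X$ and consider the endpoint map $E(u)=\gamma_u(1)$ defined on $L^2([0,1],\R^m)$, where $\gamma_u$ solves $\dot\gamma_u=\sum_i u_iX_i(\gamma_u)$ with $\gamma_u(0)=0_\mathbb{G}$. Rigidity of $t\mapsto \exp(tX)$ translates into saying that the constant control $u^*$ corresponding to $X$ is locally isolated in $E^{-1}(\exp X)$ modulo time reparametrizations, while the hypothesis that $p$ is an abnormal path projecting onto $t\mapsto\exp(tX)$ expresses the fact that $u^*$ is an abnormal critical point of $E$ with Lagrange multiplier $p(1)$, the transport equation \eqref{eq-abn} ensuring compatibility with the dynamics along the reference trajectory.

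For the necessary direction I would expand $\langle p(1),E(u^*+v)\rangle$ to second order in a perturbation $v$. Because $p(t)X_i=0$ for every $i$, the linear part vanishes identically and what remains is the intrinsic second variation
\[
J(v)=\int_0^1\!\!\int_0^t p(t)[V(s),V(t)]\,ds\,dt,\qquad V(t)=\sum_i v_i(t)X_i.
\]
If one had $p(t_0)[V_0,W_0]\neq 0$ for some $t_0$ and some $V_0,W_0\in\mathfrak{G}_H$, then oscillatory perturbations concentrated in the $(V_0,W_0)$-plane near $t_0$ would produce, via the standard Agrachev--Sarychev oscillation/index construction, a two-parameter family of perturbed controls in $\ker dE_{u^*}$ along which $J$ takes both signs, yielding a nearby curve with the same endpoints that is not a reparametrization of the reference; this contradicts rigidity, so Goh's condition $p(t)[V,W]=0$ must hold. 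Once Goh's condition is in force, one differentiates it along \eqref{eq-abn} and uses the Jacobi identity to see that $(V,W)\mapsto p(t)[V,[X,W]]$ is symmetric; an integration by parts then collapses the second variation to $\int_0^1 Q_{p(t)}(V(t))\,dt$, now acting on perturbations $V\perp X$ (the direction of $X$ itself being infinitesimal reparametrization and thus quotiented out). A negative value $Q_{p(t_0)}(V_0)<0$ would again generate a variation contradicting isolation, whence $Q_{p(t)}\ge 0$.

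For the converse direction, under Goh's condition and strict positivity of $Q_{p(t)}$ on $\{V\in\mathfrak{G}_H\mid V\perp X\}$ uniformly in $t$, the reduced second variation is coercive on the space of perturbations transverse to reparametrizations. The Agrachev--Sarychev sufficient condition then gives local injectivity of $E$ modulo reparametrizations, which is exactly rigidity. The main obstacle I anticipate is the careful bookkeeping needed to factor out reparametrizations: the perturbation $v(t)=h'(t)X$ lies in $\ker dE_{u^*}$, along it $J$ vanishes identically, and this is precisely the direction that must be quotiented out in order to obtain a meaningful Legendre-type condition; this is the intrinsic geometric reason why $Q_{p(t)}$ is required to be sign-definite only on the hyperplane $\{V\perp X\}$ rather than on the whole horizontal layer $\mathfrak{G}_H$, and it is also what makes the strict vs.\ non-strict gap between the two halves of the statement unavoidable.
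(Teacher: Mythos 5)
Your approach coincides with the paper's: Theorem~\ref{BrHs} is presented there not with an independent proof but as a direct specialization to straight curves in Carnot groups of the second-order rigidity conditions of Agrachev and Sarychev (Goh condition plus generalized Legendre condition), with the references \cite{agrachev-sarychev-rigidity} and \cite{montgomery-survey} doing the work. Your sketch --- the vanishing first variation along the abnormal lift, the necessity of the Goh condition via oscillatory perturbations, the symmetrization of $(V,W)\mapsto p(t)[V,[X,W]]$ through the transport equation \eqref{eq-abn} and the Jacobi identity, the quotienting of the reparametrization direction to restrict $Q_{p(t)}$ to $\{V\perp X\}$, and coercivity of the reduced second variation for the converse --- is a faithful reconstruction of exactly the machinery the paper invokes by citation.
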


\begin{example}\label{e:Engel}
An example of Carnot structure having rigid straight curves is the standard Engel structure. In this case $s=3$, $\dim\mathfrak{G}_1=2$, $\dim\mathfrak{G}_2=\dim\mathfrak{G}_3=1$ and one can can pick two generators $X,Y$ of the horizontal distribution whose only nontrivial bracket relations are $[X,Y]=W_1$ and $[Y,W_1]=W_2$, where $W_1$ and $W_2$ span $\mathfrak{G}_2$ and $\mathfrak{G}_3$ respectively.

Let us illustrate how the existence of rigid straight curves can be deduced from Theorem~\ref{BrHs} (one could also prove rigidity by direct computations of the same type as those of Example~\ref{e:superEngel} below).

One immediately checks 
that $p$ with $p(t) X=p(t) Y=p(t) W_1=0$ and $p(t) W_2=1$ is an abnormal path 
such that $\pi\circ p(t)=\exp(tX)$. The rigidity of $t\mapsto \exp(tX)$ then follows from 
Theorem~\ref{BrHs}, thanks to the relation  
$Q_p (Y)=1$. 
  
An extension of the previous construction can be used to  exhibit, for every $N\geq 4$,  
a Carnot group of topological dimension $N$ and step $N-1$
having straight rigid curves. It suffices to consider the $N$-dimensional Carnot group with Goursat distribution, that is, the group such that 
 $\dim\mathfrak{G}_1=2$, $\dim\mathfrak{G}_i=1$ for $i=2,\dots,N-1$, and there exist two generators $X,Y$ of $\mathfrak{G}_1$ whose only nontrivial bracket relations are $[X,Y]=W_1$ and $[Y,W_i]=W_{i+1}$ for $i=1,\dots,N-3$, where $\mathfrak{G}_{i+1}=\SPAN(W_i)$ for $i=1,\dots,N-2$. 
\end{example}

The following definition introduces the notion of \emph{pliable} horizontal curve, in contrast to a rigid one.

\begin{defi}\label{d:pliable}
We say that a curve $\gamma\in \mathcal{C}^1_H([a,b],\mathbb{G})$ is \emph{pliable}  if for every 
neighborhood $\mathcal{V}$ of $\gamma$  in $\mathcal{C}^1_H([a,b],\mathbb{G})$
the set
$$\{(\beta(b),\dot{\beta}(b))\mid \beta\in \mathcal{V},\ (\beta,\dot{\beta})(a)=(\gamma,\dot{\gamma})(a)\}$$
is a neighborhood of  $(\gamma(b),\dot{\gamma}(b))$ in $\mathbb{G}\times \mathfrak{G}_H$. 

A vector $X\in \mathfrak{G}_H$ is said to be \emph{pliable} if the curve $[0,1]\ni t\mapsto \exp(tX)$ is pliable.

We say that \emph{$\mathbb{G}$ is pliable} if every  vector $X\in \mathfrak{G}_H$ is pliable.
\end{defi}

By metric equivalence of all 
Carnot--Carath\'eodory distances, it follows that the pliability of a horizontal vector does not depend on the norm $\|\cdot\|_{\mathfrak{G}_H}$ considered on $\mathfrak{G}_H$.

Notice that, by definition of pliability, in every $\mathcal{C}^1_H$ neighborhood of a pliable curve $\gamma:[a,b]\to \mathbb{G}$ there exists a curve $\beta$ with  $\beta(a)=\gamma(a)$,  
$(\beta,\dot{\beta})(b)=(\gamma(b),W)$, and $W\neq \dot{\gamma}(b)$. This shows that pliable curves are not rigid.
 It should be noticed, however, that 
 the converse is not true in general, as will be discussed in Example~\ref{E:notRnotP}.
In this example we show that there exist horizontal straight curves that are neither rigid nor pliable.

\begin{example}\label{E:notRnotP}\label{e:superEngel}
We consider the $6$-dimensional Carnot algebra $\mathfrak{G}$ of step $3$ that is spaned by $X, Y, Z, [X,Z], [Y,Z], [Y,[Y,Z]]$ where $X,\,Y,\,Z$ is a basis of $\mathfrak{G}_1$ and except from permutations all brackets different from the ones above are zero.  

According to \cite[Chapter 4]{Bonfiglioli-et-al} there is a group structure on $\R^6$  with coordinates $(x,y,z,z_1,z_2,z_3)$ isomorphic to the corresponding Carnot group $\mathbb{G}$ such that the vectors of $\mathfrak{G}_1$ are the left-invariant vector fields 
$$X=\partial_x,\quad Y=\partial_y, \quad Z=\partial_z+x\partial_{z_1}+y\partial_{z_2}+y^2\partial_{z_3}.$$

Consider the straight curve $[0,1]\ni t\mapsto \gamma(t)=\exp(t Z)\in \mathbb{G}$.
First notice that \emph{$\gamma$ is not pliable}, since for all horizontal curves in a small enough $\mathcal{C}^1$ neighbourhood of $\gamma$ the component of the derivative along $Z$ is positive, which implies that the coordinate $z_3$ is nondecreasing. No endpoint of a horizontal curve starting from $0_\mathbb{G}$ and belonging to  a small enough $\mathcal{C}^1$ neighbourhood of $\gamma$ can have negative $z_3$ component. 

Let us now show that \emph{$\gamma$ is not rigid} either. 
Consider the solution $\beta$ of
$$\dot\beta(t)=Z(\beta(t))+u(t) X(\beta(t)),\qquad \beta(0)=0_\mathbb{G}.$$ 
Notice that the $y$ component of $\beta$ is identically equal to zero. 
As a consequence, the same is true for the components $z_2$ and $z_3$, while the $x$, $z$ and $z_1$  components of $\beta(t)$ are, respectively, $\int_0^t u(\tau)d\tau$, $t$, and $\int_0^t\int_0^\tau u(\theta)d\theta d\tau$. 
In order to disprove the rigidity, it is then sufficient to 
take a nontrivial continuous $u:[0,1]\to \R$ such that $\int_0^1 u(\tau)d\tau=0=\int_0^1\int_0^\tau u(\theta)d\theta d\tau$. 
\end{example}

Let us list some useful manipulations which transform horizontal curves into horizontal curves. Let $\gamma$ be a horizontal curve  defined on $[0,1]$ and such that $\gamma(0)=0_\mathbb{G}$. 
\begin{enumerate}[label=(T\arabic*)]
\item \label{T1} For every $\lambda>0$, the curve $t\in [0,\lambda]\mapsto\delta_\lambda\circ\gamma(\lambda^{-1}t)$ is horizontal and its velocity at time $t$ is $\dot\gamma(\lambda^{-1}t)$. 
\item \label{T2} For every $\lambda<0$, the curve $t\in [0,|\lambda|]\mapsto\delta_{\lambda}\circ\gamma(|\lambda|^{-1}t)$ is horizontal and its velocity at time $t$ is $-\dot\gamma(|\lambda|^{-1}t)$.
\item \label{T3} The curve $\bar{\gamma}$ defined by $\bar{\gamma}(t)=\gamma(1)^{-1}\cdot \gamma(1-t)$ is horizontal. It starts in $0_\mathbb{G}$ and finishes in $\gamma^{-1}(1)$. Its velocity at time $t$ is $-\dot{\gamma}(1-t)$.
\item \label{T4} If one composes the (commuting) transformations \ref{T2} with $\lambda=-1$ and \ref{T3}, one obtains a curve with derivative $\dot\gamma(1-t)$ at time $t$.
\item \label{T5} It is possible to define the concatenation of two curves $\gamma_1:[0,t_1]\to \mathbb{G}$ and  $\gamma_2:[0,t_2]\to \mathbb{G}$ 
both starting from $0_\mathbb{G}$ as follows: the concatenated curve $\tilde\gamma:[0,t_1+t_2]\to \mathbb{G}$ satisfies $\tilde{\gamma}(0)=0_\mathbb{G}$, has the same velocity as $\gamma_1$ on $[0,t_1]$ and the velocity of $\gamma_2(\cdot\,-t_1)$ on $[t_1,t_1+t_2]$. We have $\tilde{\gamma}(t_1+t_2)=\gamma_1(t_1)\cdot \gamma(t_2)$ as a consequence of the invariance of the the Lie algebra for the left-translation.
\end{enumerate}

A consequence of \ref{T1} and \ref{T2} is that  $X\in \mathfrak{G}_H\setminus\{0\}$ is rigid if and only if $\lambda X$ is rigid for every $\lambda\in \R\setminus\{0\}$. 
Similarly, $X\in \mathfrak{G}_H$ is pliable if and only if $\lambda X$ is pliable for every $\lambda\in \R\setminus\{0\}$.

Proposition~\ref{p:pliableiffweaklypliable} below gives a characterization 
of pliable horizontal vectors in terms of a condition which is apriori easier to check than the one appearing in Definition~\ref{d:pliable}.  
Before proving the proposition, let us give a technical lemma.
From now on, we write $\mathcal{B}_\mathbb{G}(x,r)$ to denote the 
ball of center $x$ and radius $r$ in $\mathbb{G}$ for the distance $d_{\mathbb{G}}$ and, similarly, $\mathcal{B}_{\mathfrak{G}_H}(x,r)$ to denote the 
ball of center $x$ and radius $r$ in $\mathfrak{G}_H$ for the norm $\|\cdot\|_{\mathfrak{G}_H}$.

\begin{lem}\label{brick}
For any $x\in \mathbb{G}$ and $0<r<R$, there exists $\eps>0$ 
such that if $y,z\in \mathbb{G}$ and $\rho\geq 0$ satisfy $d_\mathbb{G}(y,0_\mathbb{G}),d_\mathbb{G}(z,0_\mathbb{G}),\rho \leq \eps$, 
then
$$ \mathcal{B}_\mathbb{G}(x,r)\subset y\cdot\delta_{1-\rho}(\mathcal{B}_\mathbb{G}(x,R))\cdot z.$$
\end{lem}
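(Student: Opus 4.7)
The plan is to reformulate the inclusion as a statement about an auxiliary continuous map and then exploit the compactness of a closed metric ball in $\mathbb{G}$. Explicitly, $w$ belongs to $y\cdot\delta_{1-\rho}(\mathcal{B}_\mathbb{G}(x,R))\cdot z$ if and only if the element
\[
v \;=\; \delta_{(1-\rho)^{-1}}\!\left(y^{-1}\cdot w\cdot z^{-1}\right)
\]
lies in $\mathcal{B}_\mathbb{G}(x,R)$, which makes sense as soon as $\rho\ne 1$. So what must be shown is that, uniformly for $w$ in the smaller ball and $(y,z,\rho)$ close to $(0_\mathbb{G},0_\mathbb{G},0)$, this $v$ still sits at distance less than $R$ from $x$.

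First I would introduce the continuous function
\[
\Psi(w,y,z,\rho)\;=\;d_\mathbb{G}\!\left(\delta_{(1-\rho)^{-1}}\!\left(y^{-1}\cdot w\cdot z^{-1}\right),\,x\right),
\]
defined for $\rho\in[0,1/2]$ and $w,y,z\in\mathbb{G}$. The continuity of $\Psi$ follows from the continuity of the group operations, of $d_\mathbb{G}$, and of the two-parameter family of dilations $(\mathbb{G},\lambda)\mapsto\delta_\lambda$ on the compact range of parameters $\lambda\in[1,2]$ (coming from $(1-\rho)^{-1}$). At the base point $(w,0_\mathbb{G},0_\mathbb{G},0)$, one has $\Psi(w,0_\mathbb{G},0_\mathbb{G},0)=d_\mathbb{G}(w,x)$, which is at most $r<R$ whenever $w\in\overline{\mathcal{B}_\mathbb{G}(x,r)}$.

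Next I would use compactness. Since $d_\mathbb{G}$ induces the usual topology on $\mathbb{G}$, the closed ball $\overline{\mathcal{B}_\mathbb{G}(x,r)}$ is compact, and so is $\overline{\mathcal{B}_\mathbb{G}(x,r)}\times\overline{\mathcal{B}_\mathbb{G}(0_\mathbb{G},1/2)}^2\times[0,1/2]$. The open set $\{\Psi<R\}$ contains the compact ``slice'' $\overline{\mathcal{B}_\mathbb{G}(x,r)}\times\{0_\mathbb{G}\}\times\{0_\mathbb{G}\}\times\{0\}$. By a standard tube-lemma argument, there exists $\eps\in(0,1/2)$ such that
\[
\Psi(w,y,z,\rho)<R\qquad\text{for all } w\in\overline{\mathcal{B}_\mathbb{G}(x,r)}\text{ and } d_\mathbb{G}(y,0_\mathbb{G}),d_\mathbb{G}(z,0_\mathbb{G}),\rho\le\eps.
\]
Finally, I would unwind the definition: for such $(y,z,\rho)$ and any $w\in\mathcal{B}_\mathbb{G}(x,r)$, the element $v=\delta_{(1-\rho)^{-1}}(y^{-1}\cdot w\cdot z^{-1})$ belongs to $\mathcal{B}_\mathbb{G}(x,R)$, and an immediate computation using $\delta_{1-\rho}\circ\delta_{(1-\rho)^{-1}}=\mathrm{id}$ gives $w=y\cdot\delta_{1-\rho}(v)\cdot z$, i.e.\ $w\in y\cdot\delta_{1-\rho}(\mathcal{B}_\mathbb{G}(x,R))\cdot z$.

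The argument has no real obstacle; the only delicate point is to keep $\rho$ strictly smaller than $1$ so that $\delta_{(1-\rho)^{-1}}$ is a well-defined continuous map and varies continuously with $\rho$, which is why $\eps$ is chosen below $1/2$.
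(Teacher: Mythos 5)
Your proof is correct and rests on exactly the same reformulation as the paper's: membership in $y\cdot\delta_{1-\rho}(\mathcal{B}_\mathbb{G}(x,R))\cdot z$ is equivalent to $\delta_{(1-\rho)^{-1}}(y^{-1}\cdot w\cdot z^{-1})\in\mathcal{B}_\mathbb{G}(x,R)$, and the required uniformity comes from continuity plus compactness of the closed ball. The paper phrases this as a sequential contradiction argument ($\limsup_n d_\mathbb{G}(x,\delta_{(1-\rho_n)^{-1}}(y_n^{-1}\cdot x_n\cdot z_n^{-1}))\le r$) where you invoke the tube lemma directly, but this is only a cosmetic difference.
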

\begin{proof}
Assume, by contradiction, that for every $n\in \N$ there exist $x_n\in \mathcal{B}_\mathbb{G}(x,r)$, $y_n,z_n\in \mathcal{B}_\mathbb{G}(0_\mathbb{G},1/n)$ and $\rho_n\in [0,1/n]$ such that 
$$x_n\not\in y_n\cdot\delta_{1-\rho_n}(\mathcal{B}_\mathbb{G}(x,R))\cdot z_n.$$
Equivalently, 
$$ \delta_{(1-\rho_n)^{-1}}(y_n^{-1}\cdot x_n\cdot z_n^{-1})\not\in \mathcal{B}_\mathbb{G}(x,R).$$
However, $\limsup_{n\to \infty}d_\mathbb{G}(x,\delta_{(1-\rho_n)^{-1}}(y_n^{-1}\cdot x_n\cdot z_n^{-1}))\leq r$, leading to a contradiction. 
\end{proof}

\begin{prop}\label{p:pliableiffweaklypliable}\label{p:wp-vwp}
A vector $V\in \mathfrak{G}_H$ is pliable 
if and only if
for every 
neighborhood $\mathcal{V}$ of the curve $[0,1]\ni t\mapsto \exp(tV)$  in the space 
$\mathcal{C}^1_H([0,1],\mathbb{G})$, the set
$$\{\beta(1)\mid \beta\in \mathcal{V},\ (\beta,\dot{\beta})(0)=(0_\mathbb{G},V)\}$$
is a  neighborhood of $
\exp(V)$.
\end{prop}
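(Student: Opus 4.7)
The forward direction is immediate: the canonical projection $\pi : \mathbb{G} \times \mathfrak{G}_H \to \mathbb{G}$ is open, so the image under $\pi$ of a neighborhood of $(\exp V, V)$ is automatically a neighborhood of $\exp V$, and the pliability of $V$ transfers directly to the weak condition.

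For the converse, I would assume the weak condition and fix a $\mathcal{C}^1_H$-neighborhood $\mathcal{V}$ of $\gamma : t \mapsto \exp(tV)$. For any target $(q, W)$ close to $(\exp V, V)$, the plan is to construct an admissible $\beta \in \mathcal{V}$ by concatenating two pieces at a small fixed time $1 - \rho$. On $[0, 1-\rho]$ I would apply the weak condition, transferred from $[0, 1]$ to $[0, 1-\rho]$ via the dilation transformation \ref{T1}, to obtain a horizontal curve $\alpha$ close to $\gamma|_{[0, 1-\rho]}$ with $\alpha(0) = 0_\mathbb{G}$, $\dot\alpha(0) = V$, and $\alpha(1-\rho) = p$ prescribable in a fixed-radius neighborhood of $\exp((1-\rho) V)$. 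On $[1-\rho, 1]$ I would set $\beta(t) = \alpha(1-\rho) \cdot \tau(t - (1-\rho))$ for a short horizontal tail $\tau : [0, \rho] \to \mathbb{G}$ satisfying $\tau(0) = 0_\mathbb{G}$, $\dot\tau(0) = \dot\alpha(1-\rho)$ (to enforce the $\mathcal{C}^1$ matching at $t = 1-\rho$), $\dot\tau(\rho) = W$, and $\tau(\rho) = p'$ prescribable in a small neighborhood of $\exp(\rho V)$.

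With this setup, $\beta(1) = p \cdot p'$ and $\dot\beta(1) = W$, so what remains is to show that, for each target $(q, W)$, the pair $(p, p')$ can be chosen so that $p \cdot p' = q$. Here I would invoke Lemma~\ref{brick} with $x = \exp V$: combined with the product identity $\exp((1-\rho) V) \cdot \exp(\rho V) = \exp V$ and the left invariance of $d_\mathbb{G}$, it will give the required decomposition as soon as $(q, W)$ is sufficiently close to $(\exp V, V)$ and $\rho$ is sufficiently small. Taking $\rho$ small will also keep the tail $\tau$ small in $\mathcal{C}^1_H$, ensuring that the concatenated curve $\beta$ actually lies in $\mathcal{V}$.

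The hard part will be constructing the tail $\tau$ with the four prescribed boundary conditions (two on position, two on velocity) and with $\tau(\rho)$ ranging over a genuine neighborhood of $\exp(\rho V)$. Since both $\dot\alpha(1-\rho)$ and $W$ are close to $V$, a smooth interpolation of the velocity between them already produces a tail whose endpoint is close to $\exp(\rho V)$; the positional fine-tuning of $\tau(\rho)$ must then be recovered by adding small perturbations to the velocity profile within the horizontal layer $\mathfrak{G}_H$, whose effect on the endpoint is quantitatively controlled via the Baker--Campbell--Hausdorff formula. Keeping these perturbations uniformly small in $\mathcal{C}^1_H$ as $\rho$ shrinks, so that the concatenated $\beta$ indeed lies in $\mathcal{V}$, will be the technical heart of the argument.
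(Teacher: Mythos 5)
Your forward direction is fine, and your overall skeleton for the converse (a main piece rescaled via \ref{T1} plus a short velocity-interpolating tail, with Lemma~\ref{brick} handling the endpoint bookkeeping) matches the second step of the paper's proof. But the part you defer as the ``technical heart'' --- constructing a tail $\tau$ on $[0,\rho]$ with prescribed boundary derivatives, $\|\dot\tau-V\|_{\infty,\mathfrak{G}_H}\le\eps$, and $\tau(\rho)$ ranging over a genuine neighborhood of $\exp(\rho V)$ --- is not a technical step: after rescaling by \ref{T1} it is, up to the value of the initial derivative, exactly the pliability of $V$ in the sense of Definition~\ref{d:pliable}, i.e.\ the conclusion you are trying to prove. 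No Baker--Campbell--Hausdorff perturbation argument can deliver it, because the claim is false for general $V$: in Example~\ref{e:superEngel}, every horizontal curve $\mathcal{C}^1$-close to $t\mapsto\exp(tZ)$ has nondecreasing $z_3$-coordinate, so small velocity perturbations never cover a full positional neighborhood, whatever boundary conditions you impose on the derivative. Since your tail construction nowhere uses the weak hypothesis, your argument would show that \emph{every} horizontal vector is pliable, contradicting Theorem~\ref{thm-necessary} together with Example~\ref{e:Engel}. Nor does the weakening to a single achievable point $p'$ (solving $p=q\cdot p'^{-1}$ inside the prescribable ball for $p$) rescue the scheme: the tail's initial derivative must equal $\dot\alpha(1-\rho)$, which depends on the curve $\alpha$ realizing $p$, which in turn you would choose from $p'=\tau(\rho)$; the hypothesis gives no continuous selection $p\mapsto\alpha_p$, so this circular dependency cannot be closed by a fixed-point argument.

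The missing idea is the paper's Step 1, which \emph{normalizes the terminal derivative before any tail is attached}. Given a curve $\gamma$ from the hypothesis class $\mathcal{U}_\eps$, one concatenates $t\mapsto\delta_\rho\circ\gamma(\rho^{-1}t)$ with the reversal (transformation \ref{T4}) of $t\mapsto\delta_{1-\rho}\circ\gamma((1-\rho)^{-1}t)$. The resulting curve $T_\rho(\gamma)$ stays in $\mathcal{U}_\eps$, ends with derivative $\dot\gamma(0)=V$ exactly, and its endpoint is $F_\rho(\gamma(1))$ where $F_\rho(x)=\delta_\rho(x)\cdot\delta_{\rho-1}(x)^{-1}$; one checks that $F_\rho$ fixes $\exp(V)$ and is a local diffeomorphism there for $\rho$ close to $1$, so the open set of endpoints furnished by the hypothesis survives the normalization. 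Only after this step is the interpolating tail from $V$ to $W$ canonical --- its endpoint correction $u$ depends on $(V,W,\rho)$ alone, not on the curve --- so that Lemma~\ref{brick} can solve $\delta_{1-\rho}(\beta(1))\cdot u=q$ within the prescribable neighborhood. Without this normalization your decomposition $q=p\cdot p'$ cannot be made well-founded, and the proof has a genuine gap.
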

\begin{proof}
Let $\mathcal{F}:\mathcal{C}^1_H([0,1],\mathbb{G})\ni \beta\mapsto(\beta,\dot{\beta})(1)\in \mathbb{G}\times \mathfrak{G}_H$ and denote by $\pi: \mathbb{G}\times \mathfrak{G}_H\to \mathbb{G}$ the canonical projection.

One direction of the equivalence being trivial, let us take $\eps>0$ and assume that $\pi\circ \mathcal{F}(\mathcal{U}_\eps)$ is a neighbourhood of $\exp(V)$ in $\mathbb{G}$, where
$$\mathcal{U}_\eps=\{\beta\in \mathcal{C}^1_H([0,1],\mathbb{G})\mid (\beta,\dot{\beta})(0)=(0_\mathbb{G},V),\;\|\dot\beta-V\|_{\infty,\mathfrak{G}_H}<\eps\}.$$ 
 We should prove that $\mathcal{F}(\mathcal{U}_\eps)$ is a neighborhood of $(\exp(V),V)$ in $\mathbb{G}\times \mathfrak{G}_H$. 

\emph{Step 1:} As an intermediate step, we first prove that 
there exists 
$\eta>0$ such that $\mathcal{B}_\mathbb{G}(\exp(V),\eta)\times\{V\}$ is contained 
in 
$\mathcal{F}(\mathcal{U}_\eps)$.

Let $\rho$ be a real parameter in $(0, 1)$. 
Using the transformations among horizontal curves 
described earlier in this section, let us define a map $T_\rho:\mathcal{U}_\eps\to \mathcal{C}^1_H([0,1],\mathbb{G})$
associating with a curve $\gamma\in \mathcal{U}_\eps$ 
the concatenation (transformation \ref{T5})
of $\gamma_1:t\mapsto \delta_\rho\circ\gamma(\rho^{-1} t)$ on $[0,\rho]$ obtained by transformation \ref{T1} and a curve $\gamma_2$ 
defined as follows. 
Consider $\gamma_{2,1}:[0,1-\rho]\ni t\mapsto \delta_{1-\rho}\circ\gamma((1-\rho)^{-1}t)$ (again \ref{T1}). The curve $\gamma_2$ is defined from $\gamma_{2,1}$ by 
$$\gamma_2(t)=\gamma_1(\rho)\cdot \left(\gamma_{2,1}(1-\rho)^{-1}\cdot \delta_{-1}\circ\gamma_{2,1}((1-\rho)-t)\right)$$
(see transformation \ref{T4}). The derivative of $T_\rho(\gamma)$ at time $t\in[0, \rho)$ is $\dot{\gamma}(\rho^{-1}t)$. Its derivative at time $\rho+t$ is $\dot\gamma(1-(1-\rho)^{-1}t)$ for $t\in(0,1-\rho]$. 
Hence $T_\rho(\gamma)$ is continuous and has derivative $\dot{\gamma}(1)$ at limit times $\rho^-$ and $\rho^+$, i.e., is a well-defined map from $\mathcal{U}_\eps$ into $\mathcal{C}^1_H([0,1],\mathbb{G})$. 
Moreover,   $T_\rho(\gamma)$
has the same derivative $V=\dot{\gamma}(0)$ at times $0$ and $1$
and its derivative at any time in $[0,1]$ is in the set of the derivatives of $\gamma$. In particular, 
$T_\rho(\mathcal{U}_\eps)\subset \mathcal{U}_\eps$.

Notice  now that, by construction, the endpoint $T_\rho(\gamma)(1)$ of the curve $T_\rho(\gamma)$ 
is a function of $\gamma(1)$ and $\rho$ only. It is actually equal to 
$$F_\rho(x)=\delta_\rho(x)\cdot \delta_{\rho-1}(x)^{-1},$$
where $x=\gamma(1)$ (see \ref{T1} and \ref{T4}) . Let $x_0=\exp(V)$ and $\gamma_0:t\mapsto \exp(tV)$. We have $F_\rho(x_0)=x_0$ because $T_\rho(\gamma_0)=\gamma_0$, both curves having derivative constantly equal to $V$.
We prove now that for $\rho$ close enough to $1$, 
the differential of $F_\rho$ at $x_0$ is invertible. 
Let us use the coordinate identification of $\mathbb{G}$ with $\R^N$.
For every $y\in \mathbb{G}$, the limits of $\delta_\rho(y)$ and $\delta_{1-\rho}(y)$ 
as $\rho$ tends to $1$ are $y$ and $0_\mathbb{G}$ respectively,
while $D\delta_\rho(y)$ and $D\delta_{\rho-1}(y)$ converge to $\mathrm{Id}$ and $0$ respectively. One can check (see, e.g., \cite[Proposition 2.2.22]{Bonfiglioli-et-al}) that the inverse function has derivative $-\mathrm{Id}$ at $0_\mathbb{G}$. Finally the left and right translations are global diffeomorphisms. Collecting these informations and applying the chain rule, we get that $DF_\rho(x_0)$ tends to an invertible operator as $\rho$ goes to $1$. Hence for $\rho$ great enough, $F_\rho(x_0)$ is a local diffeomorphism.

We know by assumption on $V$  that, for any $\eps>0$, the endpoints of the curves of $\mathcal{U}_\eps$ 
form a neighborhood of $x_0$. 
We have shown  that this is also the case if we replace $\mathcal{U}_\eps$ by $T_\rho(\mathcal{U}_\eps)$,  for $\rho$ close to $1$. The curves of $T_\rho(\mathcal{U}_\eps)$ are in $\mathcal{U}_\eps$ and  have, moreover, derivative $V$  at time $1$.
He have thus proved that
for every  $\eps>0$ there exists 
$\eta>0$ such that $\mathcal{B}_\mathbb{G}(x_0,\eta)\times\{V\}$ is contained 
in 
$\mathcal{F}(\mathcal{U}_\eps)$.

\emph{Step 2:} Let us now prove that $\mathcal{F}(\mathcal{U}_\eps)$ is a neighborhood of $(x_0,V)$ in $\mathbb{G}\times \mathfrak{G}_H$. 

Let $\beta$ be a curve in $\mathcal{U}_\eps$ with $\dot\beta(1)=V$ and consider for every $W\in \mathcal{B}_{\mathfrak{G}_H}(V,\eps)$ and every $\rho\in(0,1)$ the curve $\alpha_{\rho,W}$ defined as follows: $\alpha_{\rho,W}=\delta_{1-\rho}\circ \beta((1-\rho)^{-1} t)$ on $[0,1-\rho]$ (transformation \ref{T1}) and $\dot{\alpha}_{\rho,W}$ is the linear interpolation between $V$ and $W$ on $[1-\rho,1]$. 
Notice that $\alpha_{\rho,W}$ is in $\mathcal{U}_\eps$. 

Let $u\in \mathbb{G}$ be the endpoint at time $\rho$ of the curve in $\mathbb{G}$ starting at $0_\mathbb{G}$
whose derivative is the linear interpolation between $V$ and $W$ on $[0,\rho]$.
Then 
$(\alpha_{\rho,W},\dot\alpha_{\rho,W})(1)=(\delta_{1-\rho}(\beta(1))\cdot u,W)$ and $u$ depends only on $V$, $\rho$ and $W$, and not on the curve $\beta$. Moreover, $u$ tends to $0_\mathbb{G}$ as $\rho$ goes to $1$, uniformly with respect to $W\in \mathcal{B}_{\mathfrak{G}_H}(V,\eps)$. 
Lemma~\ref{brick} implies that for $\rho$ sufficiently close to $1$, for every $W\in \mathcal{B}_{\mathfrak{G}_H}(V,\eps)$, it holds $\delta_{1-\rho}(\mathcal{B}_\mathbb{G}(x_0,\eta))\cdot u\supset\mathcal{B}_\mathbb{G}(x_0,\eta/2)$. 
We proved that $\mathcal{B}_\mathbb{G}(x_0,\eta/2)\times \mathcal{B}_{\mathfrak{G}_H}(V,\eps)\subset \mathcal{F}(\mathcal{U}_\eps)$, concluding the proof of the proposition. 
\end{proof}

 The main result of this section is the following theorem, which 
constitutes  the necessity part of the characterization of 
 $\mathcal{C}^1_H$ extendability
 stated in Theorem~\ref{thm-necessary-and-sufficient}.

\begin{theo}\label{thm-necessary}
Let $\mathbb{G}$ be a Carnot group. 
If  $(\R,\mathbb{G})$ has the $\mathcal{C}^1_H$ extension property, then $\mathbb{G}$ is pliable.
\end{theo}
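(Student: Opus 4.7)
I would prove the contrapositive: if $\mathbb{G}$ is not pliable, exhibit a pair $(f,L)$ satisfying the $\mathcal{C}^1_H$-Whitney condition on a closed $K_0\subset\R$ that admits no $\mathcal{C}^1_H$ extension to $\R$. Pick a non-pliable $V\in \mathfrak{G}_H$. By Proposition~\ref{p:pliableiffweaklypliable} there exist $\eps>0$ and a sequence $x_n\to \exp(V)$ in $\mathbb{G}$ such that no $\beta\in \mathcal{C}^1_H([0,1],\mathbb{G})$ satisfying $\beta(0)=0_\mathbb{G}$, $\dot\beta(0)=V$, and $\|\dot\beta-V\|_{\infty,\mathfrak{G}_H}<\eps$ has $\beta(1)=x_n$. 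Write $x_n=\exp(V)\cdot z_n$, set $\rho_n:=d_\mathbb{G}(z_n,0_\mathbb{G})\to 0$, and assume (up to a subsequence) that $(\rho_n)$ is strictly decreasing.

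Choose $t_n\downarrow 0$ with $t_{n+1}/t_n\to 0$ (for instance $t_n=1/n!$), and set $K_0=\{0\}\cup\{t_n:n\ge 1\}$, $f(0)=0_\mathbb{G}$, $f(t_n)=\delta_{t_n}(x_n)$, with $L(p)$ the homomorphism $s\mapsto \exp(sV)$ for every $p\in K_0$. If an extension $\bar f\in \mathcal{C}^1_H(\R,\mathbb{G})$ with $D\bar f_p=L(p)$ on $K_0$ existed, the rescalings $\tilde\beta_n(s):=\delta_{1/t_n}(\bar f(t_n s))$ on $[0,1]$ would satisfy $\dot{\tilde\beta}_n(s)=\dot{\bar f}(t_n s)$ (since $\bar f$ is horizontal and $\delta_\lambda$ acts by multiplication by $\lambda$ on $\mathfrak{G}_H$), so $\tilde\beta_n(0)=0_\mathbb{G}$, $\dot{\tilde\beta}_n(0)=V$, $\tilde\beta_n(1)=x_n$, and $\|\dot{\tilde\beta}_n-V\|_{\infty,\mathfrak{G}_H}=\sup_{u\in[0,t_n]}\|\dot{\bar f}(u)-V\|_{\mathfrak{G}_H}\to 0$ by continuity of $\dot{\bar f}$. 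For large $n$ this sup is below $\eps$, contradicting the choice of $x_n$.

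The main obstacle is checking the Whitney condition. At pairs $(0,t_n)$ the ratio equals $d_\mathbb{G}(\delta_{t_n}(x_n),\delta_{t_n}(\exp V))/t_n=\rho_n\to 0$ by homogeneity. The heart of the argument is the condition at pairs $(t_n,t_m)$ with (WLOG) $n<m$. Setting $\sigma=t_n-t_m>0$, left-invariance of $d_\mathbb{G}$ together with the one-parameter-group property of $s\mapsto\exp(sV)$ yields
\[
f(t_n)^{-1}f(t_m)\exp(\sigma V)=\delta_{t_n}(z_n^{-1})\cdot\Phi_\sigma(\delta_{t_m}(z_m)),
\]
where $\Phi_\sigma(u):=\exp(-\sigma V)u\exp(\sigma V)$. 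The dilation-covariance $\Phi_\sigma\circ\delta_{t_m}=\delta_{t_m}\circ\Phi_{\sigma/t_m}$ (which holds because $\delta_{t_m}$ is a Lie group automorphism sending $\exp(sV)$ to $\exp(t_m sV)$) combined with homogeneity of $d_\mathbb{G}$ gives $d_\mathbb{G}(0_\mathbb{G},\Phi_\sigma(\delta_{t_m}(z_m)))=t_m\, d_\mathbb{G}(0_\mathbb{G},\Phi_{\sigma/t_m}(z_m))$. Writing $z_m=\exp(w_m)$ and expanding $\Phi_\tau=e^{-\tau\mathrm{ad}_V}$ in the nilpotent algebra $\mathfrak{G}$ of step $s$, each $\mathfrak{G}_\ell$-component of $\Phi_\tau(w_m)$ is a polynomial in $\tau$ of degree at most $s-1$; with $|w_m^{(k)}|\le\rho_m^k$ and the standard gauge comparison for $d_\mathbb{G}$ one obtains $d_\mathbb{G}(0_\mathbb{G},\Phi_\tau(z_m))\le C(\rho_m+\tau^{(s-1)/s}\rho_m^{1/s})$ for $C$ depending only on $\mathbb{G}$ and $V$. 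Since $\sigma\ge t_n/2$ for $n$ large and $t_m\le t_n$, the Whitney ratio is bounded by a multiple of $\rho_n+\rho_m+\rho_m^{1/s}$, which vanishes as $\min(n,m)\to\infty$. As the $t_k$ are isolated away from $0$, $|t_n-t_m|\to 0$ forces $\min(n,m)\to\infty$, completing the verification.
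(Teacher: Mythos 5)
Your proof is correct, and while it follows the same overall strategy as the paper (argue by contraposition, use Proposition~\ref{p:pliableiffweaklypliable} to extract a sequence of unreachable endpoints near $\exp(V)$, and package infinitely many rescaled copies of this obstruction into a single Whitney datum), the construction itself is genuinely different. The paper accumulates at $t=1$ with $K=\{1-1/n\}\cup\{1\}$ and defines the values \emph{recursively}, $y_{n+1}=y_n\cdot\exp(\rho_n V)\cdot\delta_{\rho_n}(x_n)$, so that each gap between consecutive points of $K$ is a rescaled obstruction; the payoff is that the Whitney condition then follows from a one-line telescoping estimate using the pre-arranged normalization $d(\delta_\rho(x_n)\cdot\exp(tV),\exp(tV))\le 2^{-n}$, which absorbs all conjugation issues into the choice of subsequence. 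You instead anchor everything at the accumulation point $0$, setting $f(t_n)=\delta_{t_n}(x_n)$ with no recursion, so the obstruction lives on the intervals $[0,t_n]$; this makes your non-extendability argument cleaner than the paper's (a single dilation $\delta_{1/t_n}$ of $\bar f|_{[0,t_n]}$ plus continuity of $\dot{\bar f}$ at $0$, versus the paper's gap-by-gap argument), but shifts the work into the Whitney verification, where you must control $d_\mathbb{G}(0_\mathbb{G},\exp(-\tau V)\,z\,\exp(\tau V))$ for unbounded $\tau$ via the $\mathrm{ad}$-expansion and the ball--box comparison. Your estimate $d_\mathbb{G}(0_\mathbb{G},\Phi_\tau(z_m))\le C(\rho_m+\tau^{(s-1)/s}\rho_m^{1/s})$ is right, and dividing by $\tau\ge 1$ it does yield a ratio $O(\rho_n+\rho_m^{1/s})$. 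Two small presentational points: the Whitney condition is not symmetric in $(p,q)$, so you should also record the pairs with base point the \emph{larger} time $t_n$ and the pairs $(t_n,0)$ --- both reduce to the same conjugation estimate with $|\tau|\le 1$, so nothing new is needed; and your passage from the negation of Proposition~\ref{p:pliableiffweaklypliable} to a derivative-only neighborhood $\|\dot\beta-V\|_{\infty,\mathfrak{G}_H}<\eps$ tacitly uses that uniform closeness of derivatives forces uniform $d_\mathbb{G}$-closeness of the curves, the same reduction the paper makes when it introduces the neighborhood $\Omega$ of $V$ in $\mathfrak{G}_H$.
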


\begin{proof}
Suppose, by contradiction, that there exists $V\in\mathfrak{G}_H$ which is not pliable. We are going to prove that $(\R,\mathbb{G})$ has not the $\mathcal{C}^1_H$ extension property.

Let $\gamma(t)=\exp(tV)$ for $t\in [0,1]$.
Since $V$ 
is not 
pliable,  
it follows from Proposition~\ref{p:pliableiffweaklypliable} that 
there exist a 
neighborhood $\mathcal V$ of $\gamma$ 
in the space $\mathcal{C}^1_H([0,1],\mathbb{G})$ 
and a sequence $(x_n)_{n\ge 1}$ converging to $0_\mathbb{G}$ 
such that for every $n\ge 1$ 
no curve
$\beta$ 
in  $\mathcal {V}$ satisfies $(\beta(0),\dot\beta(0))=(0_\mathbb{G},V)$ and $\beta(1)= \gamma(1)\cdot x_n$. 
In particular, there exists a neighborhood $\Omega$ of $V$ in $\mathfrak{G}_H$ such that for every 
$\beta\in \mathcal{C}^1_H([0,1],\mathbb{G})$ with $(\beta(0),\dot\beta(0))=(0_\mathbb{G},V)$ and 
$$\dot\beta(t)\in \Omega,\quad \forall t\in[0,1],$$
we have $(\beta(1),\dot\beta(1))\ne (  \gamma(1)\cdot x_n, V)$ for every $n\in\N$. Since $\lim_{n\to\infty}x_n=0_\mathbb{G}$, we can assume without loss of generality 
that, 
for every $n\ge 1$,
\begin{equation}\label{dinfty1}
\max\{d(\delta_\rho(x_n)\cdot\exp(t V),\exp(t V))\mid \rho\in[0,1],\;t\in[-1,1]\}\leq 2^{-n}.
\end{equation}

By homogeneity and left-invariance, we deduce that for every $y\in \mathbb{G}$ and every $\rho>0$, 
 for every 
$\beta\in \mathcal{C}^1([0,\rho],\mathbb{G})$ with $(\beta(0),\dot\beta(0))=(y,V)$ and 
$$\dot\beta(t)\in \Omega,\quad \forall t\in[0,\rho],$$
we have $(\beta(\rho),\dot\beta(\rho))\ne (y\cdot \gamma(\rho)\cdot \delta_\rho( 
x_n), V)$ for every $n\in\N$.

Define $\rho_n=\frac{1}n-\frac{1}{n+1}=\frac{1}{n(n+1)}$ and $\tilde x_n=\delta_{\rho_n}(x_n)$ for every $n\in \N$. It follows from \eqref{dinfty1} that
\begin{equation}\label{dinfty}
\max\{d(\tilde x_n\cdot\exp(t V),\exp(t V))\mid t\in[-1,1]\}\leq 2^{-n},
\qquad \forall n\ge 1.
\end{equation}
We introduce the sequence defined recursively by $y_0=0_\mathbb{G}$ and 
\begin{equation}\label{yn}
y_{n+1}=y_n\cdot \gamma(\rho_n)\cdot \tilde x_n.
\end{equation}
Notice that $(y_n)_{n\geq 1}$ is a Cauchy sequence and denote by $y_\infty$ its limit as $n\to \infty$.

By construction, for every  $n\in\N$ and every
$\beta\in \mathcal{C}^1_H([0,\rho_n],\mathbb{G})$ with $(\beta(0),\dot\beta(0))=(y_n,V)
$ and 
$\dot\beta(t)\in \Omega$ for all $t\in[0,\rho_n]$,
we have $(\beta(\rho_n),\dot\beta(\rho_n))\ne (y_{n+1}, V)
$.
The proof that the $(\R,\mathbb{G})$ has not the $\mathcal{C}^1_H$ extension property is then concluded if we show that the $\mathcal{C}^1_H$-Whitney condition holds for $(f,X)$ on $K$, where 
$$K=\left(\cup_{n=1}^{\infty}\left\{1-\frac1n\right\}\right)\bigcup\{1\},$$
and $f:K\to \mathbb{G}$ and $X:K\to \mathfrak{G}_H$ are defined by 
$$f(1-n^{-1})=y_n,\quad 
X(1-n^{-1})=V,\qquad n\in\N^*\cup\{\infty\}.$$

For $i,j\in \N^*\cup\{\infty\}$, let 
\begin{align*}
D(i,j)&=d_\mathbb{G}(f(1-i^{-1}),  f(1-j^{-1})\cdot \exp[(j^{-1}-i^{-1})X(1-j^{-1})] )\\
&=d_\mathbb{G}(y_i, y_j\cdot \exp[(j^{-1}-i^{-1})V]).
\end{align*}

We have to prove that
$$D(i,j)=o(j^{-1}-i^{-1})$$
as $i,j\to \infty$, that is, for every $\eps>0$, there exists $i_\eps\in \N^*$ such that $D(i,j)<\eps|j^{-1}-i^{-1}|$ for $i,j\in\N^*\cup\{\infty\}$ with $i,j>i_\eps$. 

By triangular inequality we have
\begin{align*}
D(i,j)&\leq 
\sum_{k=\min(i,j)}^{\max(i,j)-1}  d_\mathbb{G}(y_{k+1}\cdot\exp[((k+1)^{-1}-i^{-1})V],y_k\cdot\exp[(k^{-1}-i^{-1})V]).
\end{align*}

Notice that 
\begin{align*}
d_\mathbb{G}&(y_{k+1}\cdot\exp[((k+1)^{-1}-i^{-1})V],y_k\cdot\exp[(k^{-1}-i^{-1})V])\\
&= d_\mathbb{G}(y_{k+1}\cdot\exp[((k+1)^{-1}-i^{-1})V],[y_k\cdot\gamma(\rho_k)]\cdot\exp[((k+1)^{-1}-i^{-1})V])\\
&= d_\mathbb{G}(\tilde x_{k}\cdot\exp[((k+1)^{-1}-i^{-1})V],\exp[((k+1)^{-1}-i^{-1})V]),
\end{align*}
where the last equality 
follows from \eqref{yn} and the invariance of $d_\mathbb{G}$ by left-multiplication.
Thanks to \eqref{dinfty}, one then concludes that  
$$d_\mathbb{G}(y_{k+1}\cdot\exp[((k+1)^{-1}-i^{-1})V],y_k\cdot\exp[(k^{-1}-i^{-1})V])\le 2^{-k}.$$

Hence, $D(i,j)\leq \sum_{k=\min(i,j)}^{\max(i,j)-1} 2^{-k}=o(j^{-1}-i^{-1})$ 
and this concludes the proof 
of Theorem~\ref{thm-necessary}.
\end{proof}

\section{Sufficient condition for the $\mathcal{C}^1_H$ extension property}\label{sec:sufficient}

We have seen in the previous section 
that, differently from the classical case, for a general Carnot group $\mathbb{G}$
the suitable Whitney condition for $(f,X)$ on $K$ is not sufficient for the existence of an extension $(f,\dot f)$ of $(f,X)$ on $\R$. 
More precisely, it follows from Theorem~\ref{thm-necessary} that if $\mathbb{G}$ has horizontal vectors which are not pliable, then there exist  
triples $(K,f,X)$ such that the $\mathcal{C}^1_H$-Whitney condition holds for $(f,X)$ on $K$ but there is not a $\mathcal{C}^1_H$-extension of $(f,X)$. In this next section we 
prove the converse to the result above, showing that the $\mathcal{C}_H^1$ extension property holds when 
all horizontal vectors are pliable, i.e., when $\mathbb{G}$ is pliable. 

We start by introducing the notion of \emph{locally uniformly pliable} horizontal vector. 

\begin{defi}\label{def:up}
A horizontal vector $X$ is called \emph{locally uniformly pliable} if there exists a neighborhood $\mathcal{U}$ of $X$ in $\mathfrak{G}_H$ such that for every $\eps>0$, there exists $\eta>0$ so that for every $W\in \mathcal{U}$
\begin{align*}\{(\gamma,\dot{\gamma})(1)\mid\gamma\in \mathcal{C}^1_H([0,1],\mathbb{G}),\;&(\gamma,\dot{\gamma})(0)=(0_\mathbb{G},W),\;  \|\dot\gamma - W\|_{\infty,\mathfrak{G}_H}\leq \eps\}\\
&\supset \mathcal{B}_\mathbb{G}(\exp(W),\eta)\times\mathcal{B}_{\mathfrak{G}_H}(W,\eta).
\end{align*}
\end{defi}

\begin{remark}\label{r:up-homogeneous}
As it happens for pliability, 
if $X$ is locally uniformly pliable then, for every $\lambda\in\R\setminus\{0\}$, $\lambda X$ is locally uniformly pliable.
\end{remark}

We are going to see in the following (Remark~\ref{zero-not-unif})
that pliability and local uniform pliability are not equivalent properties. 
The following proposition, however, establishes the equivalence between 
pliability and local uniform pliability of \emph{all} horizontal vectors.

\begin{prop}\label{p=>up}
If $\mathbb{G}$ is pliable, then all horizontal vectors are locally uniformly pliable. 
\end{prop}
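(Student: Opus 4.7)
\textbf{Plan for proving Proposition~\ref{p=>up}.}
The strategy is that pliability of the single vector $X$ already suffices to give uniform pliability on a neighborhood of $X$: it is not necessary to invoke pliability of other vectors $W\in\mathcal{U}$ directly. Fix a pliable $X\in\mathfrak{G}_H$ and $\eps>0$. Applying pliability of $X$ with parameter $\eps/2$ via Proposition~\ref{p:pliableiffweaklypliable}, there exists $\eta_X>0$ such that every pair in
$\mathcal{B}_\mathbb{G}(\exp(X),\eta_X)\times\mathcal{B}_{\mathfrak{G}_H}(X,\eta_X)$
is of the form $(\beta(1),\dot\beta(1))$ for some $\beta$ in
$$\mathcal{U}_{\eps/2}^X=\{\beta\in\mathcal{C}^1_H([0,1],\mathbb{G})\mid(\beta,\dot\beta)(0)=(0_\mathbb{G},X),\ \|\dot\beta-X\|_{\infty,\mathfrak{G}_H}\leq\eps/2\}.$$
The task is to propagate this reachability uniformly to nearby starting vectors.

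Given $W$ in a neighborhood $\mathcal{U}$ of $X$ and a target $(x_*,Y_*)$ close to $(\exp(W),W)$, I build an admissible curve $\alpha\in\mathcal{U}_\eps^W$ ending at $(x_*,Y_*)$ by concatenating two pieces. Choose $\rho=\rho(\|W-X\|_{\mathfrak{G}_H})\in[0,1)$ with $\rho\to 0$ as $W\to X$. On $[0,\rho]$ let $\sigma$ be the horizontal curve starting at $0_\mathbb{G}$ whose derivative interpolates linearly from $W$ at $t=0$ to $X$ at $t=\rho$; set $q=\sigma(\rho)$, so that $q\to 0_\mathbb{G}$ as $W\to X$. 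On $[\rho,1]$ set $\alpha(t)=q\cdot\gamma(t-\rho)$, where $\gamma:[0,1-\rho]\to\mathbb{G}$ satisfies $\gamma(0)=0_\mathbb{G}$ and $\dot\gamma(0)=X$, so that $\alpha$ is of class $\mathcal{C}^1_H$ at the join point. By transformation~\ref{T1} with $\lambda=1-\rho$, such a $\gamma$ is of the form $\gamma(t)=\delta_{1-\rho}(\beta(t/(1-\rho)))$ with $\beta\in\mathcal{U}_{\eps/2}^X$, and the prescribed conditions $\alpha(1)=x_*$, $\dot\alpha(1)=Y_*$ translate into
$$\beta(1)=\delta_{(1-\rho)^{-1}}(q^{-1}\cdot x_*),\qquad \dot\beta(1)=Y_*.$$

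It then remains to verify that this pulled-back target lies in $\mathcal{B}_\mathbb{G}(\exp(X),\eta_X)\times\mathcal{B}_{\mathfrak{G}_H}(X,\eta_X)$ uniformly in $W\in\mathcal{U}$ and $(x_*,Y_*)$ in a fixed-radius box around $(\exp(W),W)$. As $W\to X$ one has $\rho\to 0$ and $q\to 0_\mathbb{G}$, hence by continuity of dilations and of the group law $\delta_{(1-\rho)^{-1}}(q^{-1}\cdot x_*)\to x_*$ uniformly for $x_*$ in a compact neighborhood of $\exp(X)$; and $Y_*$ is automatically $\eta_X$-close to $X$ provided the radii of $\mathcal{U}$ and of the target box are both smaller than $\eta_X/2$. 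Shrinking $\mathcal{U}$ and choosing $\eta>0$ accordingly, the pulled-back target falls in the pliability box of $X$ for every $W\in\mathcal{U}$ and every $(x_*,Y_*)\in\mathcal{B}_\mathbb{G}(\exp(W),\eta)\times\mathcal{B}_{\mathfrak{G}_H}(W,\eta)$, and pliability of $X$ produces the required $\beta$. The derivative estimate $\|\dot\alpha-W\|_{\infty,\mathfrak{G}_H}\leq\eps$ is then automatic: on $[0,\rho]$ the derivative stays within $\|X-W\|_{\mathfrak{G}_H}\leq\eps/2$ of $W$; on $[\rho,1]$, $\dot\alpha(t)=\dot\beta((t-\rho)/(1-\rho))$ is $\eps/2$-close to $X$ and thus at most $\eps$-close to $W$. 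The only real obstacle is bookkeeping the uniform continuity of the pull-back map as $W$ varies, but since $\eta_X$ depends on $X$ only (not on $W$), this reduces to a standard uniform continuity statement on a compact neighborhood.
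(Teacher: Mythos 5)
There is a genuine gap, and it lies exactly in your opening claim that ``pliability of the single vector $X$ already suffices'' so that one need not invoke pliability of the other vectors $W\in\mathcal{U}$. The construction you describe (interpolate the velocity from $W$ to $X$ on $[0,\rho]$, then run a $\delta_{1-\rho}$-dilated copy of a curve issued from the pliability of $X$) is essentially the paper's own construction of the curve $\gamma_{W,\rho}$, and that part is sound. The problem is the quantifier order in Definition~\ref{def:up}: the neighborhood $\mathcal{U}$ of $X$ must be chosen \emph{before} $\eps$, and a single $\mathcal{U}$ must work for every $\eps>0$. In your argument the radius of $\mathcal{U}$ is forced to be at most $\eps/2$ (for the derivative estimate $\|\dot\alpha-W\|_{\infty,\mathfrak{G}_H}\le\eps$ on $[0,\rho]$) and at most $\eta_X/2$, where $\eta_X=\eta_X(\eps)$ is the radius produced by pliability of $X$ at tolerance $\eps/2$ and may shrink to $0$ with $\eps$. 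So what you actually prove is: for every $\eps$ there exist a neighborhood $\mathcal{U}_\eps$ and $\eta_\eps>0$ with the required covering property. This is strictly weaker than local uniform pliability, and the weakness is not cosmetic: by Proposition~\ref{p:zero-pliable} the vector $0$ is pliable in \emph{every} Carnot group, yet by Remark~\ref{zero-not-unif} it is not locally uniformly pliable when $\mathbb{G}$ is not pliable (e.g.\ the Engel group of Example~\ref{e:Engel}). Hence no argument using only the pliability of the single vector $X$ can prove the conclusion.

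The paper's proof establishes precisely the $\eps$-dependent local statement you obtain --- for each pliable $V$ and each $\eps$ there is $\nu(V,\eps)>0$ such that all $W\in\mathcal{B}_{\mathfrak{G}_H}(V,\nu(V,\eps))$ satisfy the covering property with radius $\eta(V,\eps/2)/4$ --- but then uses the hypothesis that \emph{every} horizontal vector is pliable: fixing a compact neighborhood $\mathcal{U}$ of $X$ once and for all, for a given $\eps$ one covers $\mathcal{U}$ by the balls $\mathcal{B}_{\mathfrak{G}_H}(V,\nu(V,\eps))$ with $V$ ranging over $\mathcal{U}$, extracts a finite subcover, and takes $\eta$ to be the minimum of the finitely many radii $\eta(V_i,\eps/2)/4$. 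To repair your proof you should keep your two-piece construction but base it at an arbitrary pliable $V\in\mathcal{U}$ rather than only at $X$, and then add this compactness step.
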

\begin{proof}
Assume that $\mathbb{G}$ is pliable. 
For every $V\in \mathfrak{G}_H$ and $\eps>0$
denote by $\eta(V,\eps)$ a positive constant such that
\begin{align*}\{(\gamma,\dot{\gamma})(1)\mid\gamma\in \mathcal{C}^1_H([0,1],\mathbb{G}),\;&(\gamma,\dot{\gamma})(0)=(0_\mathbb{G},V),\;  \|\dot\gamma - V\|_{\infty,\mathfrak{G}_H}\leq \eps\}\\
&\supset \mathcal{B}_\mathbb{G}(\exp(V),\eta(V,\eps))\times\mathcal{B}_{\mathfrak{G}_H}(V,\eta(V,\eps)).
\end{align*}

We are going to show that there exists $\nu(V,\eps)>0$ such that 
for every $W\in \mathcal{B}_{\mathfrak{G}_H}(V,\nu(V,\eps))$
\begin{align}\{(\gamma,\dot{\gamma})(1)\mid\gamma\in &\mathcal{C}^1_H([0,1],\mathbb{G}),\;(\gamma,\dot{\gamma})(0)=(0_\mathbb{G},W),\;  \|\dot\gamma - W\|_{\infty,\mathfrak{G}_H}\leq \eps\}\nonumber\\
&\supset \mathcal{B}_\mathbb{G}\left(\exp(W),\frac{\eta\left(V,\frac\eps2\right)}{4}\right)\times\mathcal{B}_{\mathfrak{G}_H}\left(W,\frac{\eta\left(V,\frac\eps2\right)}{4}\right).\label{uniform-eta-delta}
\end{align}

The proof of the local uniform pliability of any horizontal vector $X$ is then concluded by simple compactness arguments (taking any compact neighborhood $\mathcal{U}$ of $X$, using the notation of Definition~\ref{def:up}).

First fix $\bar \nu(V,\eps)>0$ in such a way that 
$$\exp(W)\in \mathcal{B}_\mathbb{G}(\exp(V),\eta(V,\eps/2)/4)$$
for every $W\in \mathcal{B}_{\mathfrak{G}_H}(V,\bar \nu(V,\eps))$.

For every $W\in \mathfrak{G}_H$, every $\rho\in (0,1)$, and every curve  $\gamma\in \mathcal{C}^1_H([0,1],\mathbb{G})$ such that $(\gamma,\dot{\gamma})(0)=(0_\mathbb{G},V)$, define
$\gamma_{W,\rho}\in \mathcal{C}^1_H([0,1],\mathbb{G})$ as follows: $\gamma_{W,\rho}(0)=0_\mathbb{G}$, $\dot\gamma_{W,\rho}(t)=(t/\rho)V+((\rho-t)/\rho)W$ for $t\in [0,\rho]$, 
$\dot\gamma_{W,\rho}(\rho+(1-\rho)t)=\dot\gamma(t)$ for $t\in [0,1]$. 
In particular
$$\gamma_{W,\rho}(1)=\gamma_{W,\rho}(\rho)\cdot \delta_{1-\rho}(\gamma(1)),\quad \dot \gamma_{W,\rho}(1)=\dot\gamma(1),$$
and
$$
\|\dot\gamma_{W,\rho} - W\|_{\infty,\mathfrak{G}_H}\leq \|\dot\gamma - V\|_{\infty,\mathfrak{G}_H}+ 
\|W-V\|_{\mathfrak{G}_H}.$$
If $\|V-W\|_{\mathfrak{G}_H}\leq \eps/2$, 
we then have 
$$
\|\dot\gamma_{W,\rho} - W\|_{\infty,\mathfrak{G}_H}\leq \eps\quad \forall\gamma\mbox{ such that }\|\dot\gamma - V\|_{\infty,\mathfrak{G}_H}\leq \frac\eps2.
$$
Since $\gamma_{W,\rho}(\rho)$ depends on $V,W$, and $\rho$, but not on $\gamma$, 
 we conclude that, for every $W\in \mathcal{B}_{\mathfrak{G}_H}(V,\eps/2
 )$,
\begin{align*}\{(\beta,&\dot{\beta})(1)\mid\beta\in \mathcal{C}^1_H([0,1],\mathbb{G}),\;(\beta,\dot{\beta})(0)=(0_\mathbb{G},W),\;  \|\dot\beta - W\|_{\infty,\mathfrak{G}_H}\leq \eps\}\nonumber\\
&\supset \left(
\gamma_{W,\rho}(\rho)\cdot \delta_{1-\rho}\left(\mathcal{B}_\mathbb{G}\left(\exp(V),\eta\left(V,\frac \eps2\right)\right)\right)\right)\times\mathcal{B}_{\mathfrak{G}_H}\left(V,\eta\left(V,\frac\eps2\right)\right).
\end{align*}

Notice that 
$d_\mathbb{G}(0_\mathbb{G},\gamma_{W,\rho}(\rho))\leq \rho \max(\|V\|_{\mathfrak{G}_H},\|W\|_{\mathfrak{G}_H})$. 
Thanks to Lemma~\ref{brick}, for $\rho$ sufficiently small, 
$$\gamma_{W,\rho}(\rho)\cdot \delta_{1-\rho}\left(\mathcal{B}_\mathbb{G}\left(\exp(V),\eta\left(V,\frac \eps2\right)\right)\right)\supset 
\mathcal{B}_\mathbb{G}\left(\exp(V),\frac{\eta\left(V,\frac\eps2\right)}{2}\right)
.$$
Now, 
$$\mathcal{B}_\mathbb{G}\left(\exp(V),\frac{\eta\left(V,\frac\eps2\right)}{2}\right)\supset 
\mathcal{B}_\mathbb{G}\left(\exp(W),\frac{\eta\left(V,\frac\eps2\right)}{4}\right)
$$
 whenever $W\in \mathcal{B}_{\mathfrak{G}_H}(V,\bar\nu(V,\eps))$.
 
Similarly, 
$$\mathcal{B}_{\mathfrak{G}_H}\left(V,\eta\left(V,\frac\eps2\right)\right)\supset 
 \mathcal{B}_{\mathfrak{G}_H}\left(W,\frac{\eta\left(V,\frac\eps2\right)}{4}\right),$$
 provided that $\|V-W\|_{\mathfrak{G}_H}\leq 3\eta(V,\eps/2)/4$.
 The proof of 
\eqref{uniform-eta-delta} is concluded by taking 
$\nu(V,\eps)=\min(\bar\nu(V,\eps),\eps/2,3\eta(V,\eps/2)/4)$. 
\end{proof}

We are now ready to prove the converse of Theorem~\ref{thm-necessary}, concluding the proof of 
Theorem~\ref{thm-necessary-and-sufficient}. 
\begin{theo}\label{thm-sufficient}
Let $\mathbb{G}$ be a pliable Carnot group. Then $(\R,\mathbb{G})$ has the $\mathcal{C}^1_H$ extension property.
\end{theo}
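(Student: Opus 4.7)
The idea is to extend $f$ gap by gap. Decompose $\R\setminus K_0$ into its connected components: a countable family of bounded intervals $(a_i,b_i)$ with $a_i,b_i\in K_0$, together with at most two unbounded rays. On $K_0$, set $\bar f:=f$. On each unbounded ray, say $(b^*,+\infty)$ with $b^*\in K_0$, set $\bar f(t):=f(b^*)\cdot\exp((t-b^*)X(b^*))$, and similarly on the left ray. On each bounded gap, I would construct a curve $\bar f\colon[a_i,b_i]\to\mathbb{G}$ of class $\mathcal{C}^1_H$ matching the boundary data $f(a_i),f(b_i)$ and the boundary velocities $X(a_i),X(b_i)$, with the crucial additional estimate
\begin{equation*}
\|\dot{\bar f}-X(a_i)\|_{\infty,\mathfrak{G}_H}\leq\epsilon_i,\qquad\text{with }\epsilon_i\to 0\text{ as }h_i:=b_i-a_i\to 0.
\end{equation*}

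The short-gap construction is the one that uses pliability. For each $M>0$, the image $X(K_0\cap[-M,M])$ is compact in $\mathfrak{G}_H$; covering it by finitely many local uniform pliability neighborhoods provided by Proposition~\ref{p=>up} yields, for every $\epsilon>0$, a uniform $\eta=\eta(\epsilon,M)>0$ such that for every $W$ in a neighborhood of $X(K_0\cap[-M,M])$ and every $(p,V)\in\mathcal{B}_\mathbb{G}(\exp(W),\eta)\times\mathcal{B}_{\mathfrak{G}_H}(W,\eta)$ there exists $\tilde\gamma\in\mathcal{C}^1_H([0,1],\mathbb{G})$ joining $(0_\mathbb{G},W)$ to $(p,V)$ with $\|\dot{\tilde\gamma}-W\|_{\infty,\mathfrak{G}_H}\leq\epsilon$. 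The $\mathcal{C}^1_H$-Whitney condition together with continuity of $X$ give, as $h_i\to 0$,
\begin{align*}
d_\mathbb{G}\bigl(\delta_{1/h_i}(f(a_i)^{-1}\cdot f(b_i)),\,\exp(X(a_i))\bigr)&=\frac{d_\mathbb{G}(f(b_i),\,f(a_i)\cdot\exp(h_i X(a_i)))}{h_i}\leq\frac{\omega(h_i)}{h_i}\to 0,\\
\|X(b_i)-X(a_i)\|_{\mathfrak{G}_H}&\to 0.
\end{align*}
Thus, once $h_i$ is small enough, both quantities fall below $\eta(\epsilon,M)$ and uniform pliability furnishes $\tilde\gamma$. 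Setting $\bar f(a_i+th_i):=f(a_i)\cdot\delta_{h_i}(\tilde\gamma(t))$ yields a $\mathcal{C}^1_H$ curve on $[a_i,b_i]$ with the prescribed boundary data, and whose derivative at $a_i+th_i$ coincides with $\dot{\tilde\gamma}(t)$, so that $\|\dot{\bar f}-X(a_i)\|_{\infty,[a_i,b_i]}\leq\epsilon$. Choosing $\epsilon=\epsilon_i\to 0$ as $h_i\to 0$ delivers the desired estimate.

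For the finitely many large gaps inside each $[-M,M]$ (those of length above a fixed threshold $h_*$), I would construct $\bar f|_{[a_i,b_i]}$ as a concatenation of a short integral curve of $X(a_i)$ issued from $f(a_i)$, a $\mathcal{C}^1_H$ horizontal connector provided by the bracket-generating property of $\mathfrak{G}_H$ (Chow), and a short integral curve of $X(b_i)$ arriving at $f(b_i)$, with routine smoothing at the two junctions. Continuity of $\bar f$ on $\R$ is then automatic, and continuity of $\dot{\bar f}$ holds on gap interiors and on $K_0$ by continuity of $X$. The only delicate case is a point $p_0\in K_0$ that is an accumulation point of $\R$: for $t_n\in K_0$ with $t_n\to p_0$, $\dot{\bar f}(t_n)=X(t_n)\to X(p_0)$; for $t_n$ lying in gaps $(a_n,b_n)$ with $(a_n,b_n)\to\{p_0\}$ one necessarily has $h_n\to 0$, and
\begin{equation*}
\|\dot{\bar f}(t_n)-X(p_0)\|_{\mathfrak{G}_H}\leq\epsilon_n+\|X(a_n)-X(p_0)\|_{\mathfrak{G}_H}\to 0.
\end{equation*}

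The principal obstacle is upgrading the pointwise local uniform pliability of Proposition~\ref{p=>up} to a statement uniform over the compact image $X(K_0\cap[-M,M])$: this is the only step where pliability of the \emph{entire} group $\mathbb{G}$ (rather than of a single vector) is used, through a finite covering of that compact set by local uniform pliability neighborhoods and subsequent choice $\eta(\epsilon,M):=\min_j\eta_{W_j}(\epsilon)$. Once this uniform $\eta$ is in hand, the rescaling and the Whitney estimate reduce the construction in each small gap to a single application of uniform pliability, and the passage to the $\mathcal{C}^1_H$-regular extension on all of $\R$ is a bookkeeping matter.
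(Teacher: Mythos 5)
Your proposal is correct and follows essentially the same route as the paper: decompose $\R\setminus K_0$ into gaps, connect the boundary data $(f(a_i),X(a_i))$ and $(f(b_i),X(b_i))$ after rescaling by $\delta_{1/h_i}$ using the local uniform pliability supplied by Proposition~\ref{p=>up} together with the Whitney estimates $d_\mathbb{G}(\delta_{1/h_i}(f(a_i)^{-1}\cdot f(b_i)),\exp(X(a_i)))\le\omega(h_i)$ and $\|X(b_i)-X(a_i)\|_{\mathfrak{G}_H}\le\omega(h_i)$, and verify $\mathcal{C}^1$-regularity at accumulation points of $K_0$ via the shrinking of nearby gaps. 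The only organizational differences --- your explicit split into small and large gaps with a Chow-type connector for the latter, and your up-front compactness argument making $\eta$ uniform over $X(K_0\cap[-M,M])$ instead of invoking local uniform pliability only in the final limit --- do not change the substance of the argument.
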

\begin{proof}
By Proposition~\ref{p=>up}, we can assume that  all vectors in $\mathfrak{G}_H$ are locally uniformly pliable. Note moreover that it is enough to prove the extension for maps defined on compact sets $K$. The generalisation to closed sets $K_0$ is immediate because the source Carnot group is $\R$. Let $(f,X)$ satisfy the $\mathcal{C}^1_H$-Whitney condition on $K$ where $K$ is compact. We have to define $\bar{f}$ on the complementary (open) set $\R\setminus K$, which is the countable and disjoint union of open intervals. For the unbounded components of $\R\setminus K$, we simply define $\bar{f}$ as the curve with constant speed $X(i)$ or $X(j)$ where $i=\min(K)$ and $j=\max(K)$. For the finite components $(a,b)$ we proceed as follows. 
We consider 
$y=\delta_{1/(b-a)}(f(a)^{-1}\cdot f(b))$. We let $\eps$ be the smallest number such that
$$\{(\gamma,\dot{\gamma})(1)\mid \gamma\in \mathcal{C}^1_H([0,1],\mathbb{G}),\;(\gamma,\dot{\gamma})(0)=(0_\mathbb{G},X(a)), \|\dot \gamma - X(a)\|_{\infty,\mathfrak{G}_H}\leq \eps'\}$$
contains $(y,X(b))$ for every $\eps'>\eps$. We consider an extension $\bar{f}\in \mathcal{C}^1_H$ of $f$ on $[a,b]$ such that $\dot{\bar{f}}(a)=X(a)$, $\dot{\bar{f}}(b)=X(b)$,  and $\|\dot{\bar{f}}- X(a)\|_{\infty,\mathfrak{G}_H}\leq 2\eps$.
By definition of the $\mathcal{C}^1_H$-Whitney condition, there exists a function $\omega:\R^+\to \R^+$ tending to $0$ at $0$ such that
$R(a,b)=d_\mathbb{G}(f(b),f(a)\cdot \exp[(b-a)X(a)])$
is smaller than $\omega(b-a)(b-a)$ and $\|X(b)-X(a)\|_{\mathfrak{G}_H}\leq \omega(b-a)$. 
Since $R(a,b)$ is equal to $(b-a)d_\mathbb{G}(\exp(X(a)),y)$, we can conclude that 
\begin{equation}\label{xyz}
d_\mathbb{G}(\exp(X(a)),y)\leq \omega(b-a).
\end{equation} 
Using the corresponding estimates for $R(b,a)$, we deduce that 
\begin{equation}\label{yzx}
d_\mathbb{G}(\exp(-X(b)),y^{-1})\leq \omega(b-a).
\end{equation} 

By construction $\bar{f}$ extends $f$ and $\dot{\bar{f}}=X$ on the interior of $K$.
We prove now that $\bar{f}$ is $\mathcal{C}^1_H$ and that $\dot {\bar{f}}=X$ on the boundary $\partial K$ of $K$. It is clear that $\bar{f}$ is $\mathcal{C}^1_H$ on $\R\setminus \partial K$. In order to conclude the proof we are left to pick $x\in \partial K$, let $x_n$ tend to $x$, and we must show that $\bar{f}(x_n)$ and $\dot{\bar{f}}(x_n)$ tend to $f(x)$ and $X(x)$ respectively. As $f$ and $X$ are continuous on $K$, we can assume without loss of generality that each $x_n$ is in $\R\setminus K$. Assume 
for now
that $x_n<x$ for every $n$. The connected component $(a_n,b_n)$ of $\R\setminus K$ containing $x_n$ is either constant for $n$ large (in this case $x=b_n$)  or its length goes to zero as $n\to \infty$. 
In the  first case we simply notice that $\bar{f}|_{[a_n,b_n]}$ is $\mathcal{C}^1$ by construction. 
In the second case we can assume that  $a_n<x_n<b_n$ and $b_n-a_n$ goes to zero. As $f$ and $X$ are continuous, $f(a_n)$ and $X(a_n)$ converge to $f(x)$ and $X(x)$ respectively.
Inequality  \eqref{xyz} guarantees that 
$d_\mathbb{G}(\exp(X(a_n)),\delta_{1/(b_n-a_n)}[f(a_n)^{-1}\cdot f(b_n)])\leq \omega(b_n-a_n)\to 0$ as $n\to \infty$ and the local uniform pliability of $X(x)$ implies that  
 $\|\dot{\bar{f}}|_{[a_n,b_n]}-X(a_n)\|_{\infty,\mathfrak{G}_H}$ goes to zero as $n\to \infty$. It follows that $\|\dot {\bar{f}}(x_n)-X(a_n)\|_{\mathfrak{G}_H}
 $ and $d_\mathbb{G}(\bar{f}(x_n),f(a_n))$ go to zero, proving that $\bar{f}(x_n)$ and $\dot {\bar{f}}(x_n)$ tend to $f(x)$ and $X(x)$ respectively.
 The situation where $x_n>x$ for infinitely many $n$ can be handled similarly replacing \eqref{xyz} by  \eqref{yzx}.  
\end{proof}

\section{Application to the Lusin approximation of an absolutely continuous curve}
\label{s:5}

In a recent paper, E. Le Donne and G.~Speight prove the following result 
(\cite[Theorem 1.2]{LeDonne-Speight}).
\begin{prop}[Le Donne--Speight]\label{p:speight}
Let $\mathbb{G}$ be a Carnot group of step 2 and consider 
 a horizontal curve
$\gamma:[a,b]\to \mathbb{G}$. For any $\eps>0$, there exist $K\subset [a,b]$ 
and a $\mathcal{C}^1_H$-curve $\gamma_1:[a,b]\to\mathbb{G}$ such that $\mathcal{L}([a,b]\setminus K)<\eps$ and $\gamma=\gamma_1$ on $K$. 
\end{prop}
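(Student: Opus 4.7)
The plan is to reduce the statement to the $\mathcal{C}^1_H$ extension property. Since $\mathbb{G}$ has step $2$ it is pliable (Theorem~\ref{t:step2}), so by Theorem~\ref{thm-necessary-and-sufficient} the pair $(\R,\mathbb{G})$ has the $\mathcal{C}^1_H$ extension property. It therefore suffices to find a compact set $K\subset[a,b]$ with $\mathcal{L}([a,b]\setminus K)<\eps$ for which the pair $(\gamma|_K,\dot\gamma|_K)$ satisfies the $\mathcal{C}^1_H$-Whitney condition on $K$; the curve $\gamma_1$ is then the restriction to $[a,b]$ of the resulting extension.

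The construction of $K$ is purely measure-theoretic. Viewing $\dot\gamma$ as an $L^1$ map into the finite-dimensional vector space $\mathfrak{G}_H$, Lusin's theorem provides a compact subset on which $\dot\gamma$ is continuous. Combining this with the Lebesgue differentiation theorem and Egorov's theorem, I would further shrink it, losing total measure at most $\eps$, to a compact $K$ on which
\[
\lim_{h\to 0}\frac{1}{|h|}\int_\tau^{\tau+h}\|\dot\gamma(s)-\dot\gamma(\tau)\|_{\mathfrak{G}_H}\,ds=0\quad\text{uniformly in }\tau\in K.
\]

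The core step is to verify the Whitney condition on $K$. By left-invariance this amounts to bounding $d_{\mathbb{G}}(0_\mathbb{G},\exp(-(t-\tau)\dot\gamma(\tau))\cdot\gamma(\tau)^{-1}\cdot\gamma(t))$ as $|t-\tau|\to 0$. In step $2$, the Baker--Campbell--Hausdorff formula reads $(h_1,v_1)\cdot(h_2,v_2)=(h_1+h_2,v_1+v_2+\tfrac{1}{2}[h_1,h_2])$ in exponential coordinates, so the point in question can be computed explicitly. Its horizontal component equals $\int_0^{t-\tau}(\dot\gamma(\tau+u)-\dot\gamma(\tau))\,du$ and, after an integration by parts, its $\mathfrak{G}_2$-component reduces to an integral quadratic in the deviation $r(u)=\dot\gamma(\tau+u)-\dot\gamma(\tau)$. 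Using the uniform Lebesgue-point property of $K$, the horizontal component has norm $o(|t-\tau|)$ and the vertical one has norm $o(|t-\tau|^2)$. Combined with the comparison $d_\mathbb{G}(0_\mathbb{G},(h,v))\le C(\|h\|+\|v\|^{1/2})$, valid in any step $2$ Carnot group, this yields
\[
d_{\mathbb{G}}\bigl(\gamma(t),\gamma(\tau)\cdot\exp((t-\tau)\dot\gamma(\tau))\bigr)=o(|t-\tau|)
\]
uniformly on $K$, which is exactly the Whitney condition.

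The main obstacle is this final estimate: reducing the vertical component to $o(|t-\tau|^2)$ rather than merely $O(|t-\tau|^2)$ relies on the BCH formula terminating at the first bracket and on the square-root scaling of the Carnot--Carath\'eodory distance in the second layer. Both features disappear in higher step, which is precisely why the generalisation to an arbitrary pliable group in Proposition~\ref{our_Lusin_appr} cannot follow the same computational route and must exploit pliability as an abstract geometric input rather than as a corollary of an explicit BCH expansion.
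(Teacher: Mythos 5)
Your proof is correct, but it verifies the key estimate by a different route than the paper. The paper states Proposition~\ref{p:speight} as a quoted result of Le Donne--Speight and recovers it as the step-2 special case of Proposition~\ref{our_Lusin_appr}, whose proof establishes the uniform Taylor estimate on $K$ by invoking the Pansu--Rademacher theorem for Lipschitz curves and then reducing a general horizontal curve to the Lipschitz case by arclength reparametrization ($\gamma=\varphi\circ F$), composing the two Taylor expansions. You instead verify the $\mathcal{C}^1_H$-Whitney condition by a direct Baker--Campbell--Hausdorff computation in exponential coordinates, using only the Lebesgue differentiation theorem (made uniform on $K$) applied to the $L^1$ map $\dot\gamma$. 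This is more elementary and self-contained --- it bypasses both Pansu--Rademacher and the reparametrization step, since the uniform Lebesgue-point property already controls the horizontal component by $o(|t-\tau|)$ and the second-layer component by $o(|t-\tau|^2)$ --- but, as you correctly note, it is confined to step $2$, whereas the paper's argument is exactly what allows the extension to all pliable groups. One small imprecision: after the integration by parts the $\mathfrak{G}_2$-component is not purely quadratic in $r(u)=\dot\gamma(\tau+u)-\dot\gamma(\tau)$; it contains the term $-\bigl[\dot\gamma(\tau),\int_0^{t-\tau}\int_0^s r(u)\,du\,ds\bigr]$, which is linear in $r$ but involves a double integral and is therefore still $o(|t-\tau|^2)$ since $\dot\gamma$ is bounded on the compact set $K$; so your final estimate, combined with the ball--box comparison $d_\mathbb{G}(0_\mathbb{G},(h,v))\le C(\|h\|+\|v\|^{1/2})$, goes through. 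The reduction to the extension property via Theorem~\ref{t:step2} and Theorem~\ref{thm-necessary-and-sufficient} is the same as the paper's.
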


In the case in which $\mathbb{G}$ is equal to the $n$-th Heisenberg group $\mathbb{H}_n$, 
such result had already been proved in  
\cite[Theorem 2.2]{Spe} (see also \cite[Corollary 3.8]{zimmerman}).
In \cite{Spe} G.~Speight also identifies a horizontal curve on the Engel group such that the statement of Proposition~\ref{p:speight} is not satisfied (\cite[Theorem 3.2]{Spe}).

The name ``Lusin approximation'' for the property stated in Proposition~\ref{p:speight}
comes from the use of the classical theorem of Lusin \cite{Lusin} in the proof. Let us sketch a proof when $\mathbb{G}$ is replaced by a vector space $\R^{n}$. The derivative $\dot{\gamma}$ of an absolutely continuous curve $\gamma$ is an integrable function. Lusin's theorem states that 
$\dot \gamma$  coincides with a continuous vector-valued function $X: K\to \R^{n}$
on a set $K$ of measure arbitrarily close to $b-a$. 
Thanks to the inner continuity of the Lebesgue measure, one can assume that $K$ is compact. Moreover $K$ can be chosen so that the Whitney condition is satisfied by $(\gamma|_K,X)$ on $K$. This is a consequence of the mean value inequality
\begin{align}\label{domination}
\|\gamma(x+h)-\gamma(x)-h\dot{\gamma}(x)\|\leq o(h),
\end{align}
where $o(h)$ depends on $x\in K$. By usual arguments of measure theory, inequality~\eqref{domination} can be made uniform with respect to $x$ if one slightly reduces the measure of $K$. The (classical) Whitney extension theorem provides a $\mathcal{C}^1$-curve $\gamma_1$ defined on $[a,b]$ with $\gamma_1=\gamma$ and $\dot{\gamma_1}=X$ on $K$.

The proof in \cite{LeDonne-Speight} (and also in \cite{Spe}) follows the same scheme as the one sketched above. 
We show here below how the same scheme can be adapted 
to any pliable Carnot group. The fact that all Carnot groups of step 2 are pliable and that not all pliable 
Carnot groups are of step 1 or 2 
is proved in the next section (Theorem~\ref{t:step2} and Proposition~\ref{p:every_step}), so that our paper actually provides a nontrivial generalization of Proposition~\ref{p:speight}.
The novelty of our approach with respect to those in \cite{LeDonne-Speight,Spe, zimmerman} is 
to replace the classical Rademacher differentiablility theorem for Lipschitz or absolutely continuous curves from $\R$ to $\R^n$ by the more adapted Pansu--Rademacher theorem.

\begin{prop}[Lusin approximation of a horizontal curve]\label{our_Lusin_appr}
Let $\mathbb{G}$ be a pliable Carnot group and $\gamma:[a,b]\to \mathbb{G}$ be a horizontal curve. Then for any $\eps>0$ there exist $K\subset [a,b]$ with $\mathcal{L}([a,b]\setminus K)< \eps$ and a curve $\gamma_1:[a,b]\to \mathbb{G}$ of class $\mathcal{C}^1_H$ such that the curves $\gamma$ and $\gamma_1$ coincide on $K$. 
\end{prop}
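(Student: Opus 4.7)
The approach is to imitate the classical Lusin-approximation argument sketched just before the statement, replacing the classical Rademacher theorem with the Pansu--Rademacher theorem (Proposition~\ref{PR_them}) and the classical Whitney extension theorem with its $\mathcal{C}^1_H$ analogue provided by Theorem~\ref{thm-necessary-and-sufficient}, which is available precisely because $\mathbb{G}$ is pliable. By a standard partition of $[a,b]$ into countably many closed subintervals on which $\gamma$ is Lipschitz with respect to $d_\mathbb{G}$ (or by an arc-length reparametrization after removing the zero-velocity set), it suffices to treat the case where $\gamma$ is globally Lipschitz. Pansu--Rademacher then produces, for a.e. $t\in[a,b]$, a Pansu derivative $D\gamma_t\colon \R\to\mathbb{G}$; since any homogeneous homomorphism from $\R$ to $\mathbb{G}$ has the form $h\mapsto\exp(hX)$ for some $X\in\mathfrak{G}_H$, this yields a measurable map $X\colon[a,b]\to\mathfrak{G}_H$, and one checks that $X(t)=\dot\gamma(t)$ at every Lebesgue point of $\dot\gamma$.

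Next, I would apply Lusin's theorem to obtain a compact $K_1\subset[a,b]$, with $\mathcal{L}([a,b]\setminus K_1)<\eps/2$, on which $X$ is continuous. To upgrade the pointwise Pansu-differentiability to uniform control I would use Egorov's theorem on the sequence
\[
\phi_n(t)=\sup_{0<|h|\le 1/n,\ t+h\in[a,b]}\frac{d_\mathbb{G}(\gamma(t+h),\gamma(t)\cdot\exp(hX(t)))}{|h|}.
\]
Each $\phi_n$ is measurable and $\phi_n(t)\to 0$ as $n\to\infty$ for a.e. $t\in K_1$; Egorov produces a measurable (hence, after inner regularization, compact) subset $K\subset K_1$ with $\mathcal{L}(K_1\setminus K)<\eps/2$ on which $\phi_n\to 0$ uniformly. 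Then $\mathcal{L}([a,b]\setminus K)<\eps$, and for every $\tau,t\in K$ with $|\tau-t|<1/n$,
\[
d_\mathbb{G}(\gamma(t),\gamma(\tau)\cdot\exp((t-\tau)X(\tau)))\le \sigma_n|t-\tau|,
\]
with $\sigma_n\to 0$. Combined with the continuity of $X$ on $K$, this is exactly the $\mathcal{C}^1_H$-Whitney condition for $(\gamma|_K,X|_K)$ on $K$ in the sense recalled in Section~\ref{s:2}.

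Since $\mathbb{G}$ is pliable, Theorem~\ref{thm-necessary-and-sufficient} then yields a curve $\gamma_1\in\mathcal{C}^1_H(\R,\mathbb{G})$ extending $\gamma|_K$ and satisfying $\dot\gamma_1=X$ on $K$; this $\gamma_1$ is the desired $\mathcal{C}^1_H$-approximant. The main obstacle in this plan is the Egorov step, and more specifically the measure-theoretic verification that the Pansu derivative of $\gamma$ indeed coincides almost everywhere with the exponential of $\dot\gamma(t)$ — for which one must exploit left-invariance to write $\gamma(t)^{-1}\cdot\gamma(t+h)$ as the endpoint of a suitably rescaled horizontal curve and use the $L^1$-differentiation of $\dot\gamma$. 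Once this bookkeeping is carried out, all the nontrivial analytic content is supplied by Theorem~\ref{thm-necessary-and-sufficient}.
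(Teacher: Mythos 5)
Your proposal follows essentially the same route as the paper: reduce to the Lipschitz case, apply the Pansu--Rademacher theorem, use Lusin's theorem plus an Egorov-type uniformization to produce a compact $K$ on which the $\mathcal{C}^1_H$-Whitney condition holds for $(\gamma|_K,X)$, and conclude via the sufficiency half of Theorem~\ref{thm-necessary-and-sufficient}. The only caveat is that your first suggested reduction --- partitioning $[a,b]$ into countably many closed subintervals on which $\gamma$ is Lipschitz --- fails in general (an absolutely continuous curve need not be Lipschitz on any subinterval); the paper uses your second option, the arc-length reparametrization $\gamma=\varphi\circ F$ with $\varphi$ Lipschitz of unit speed and $F$ absolutely continuous non-decreasing, and then composes the Taylor expansions of $F$ and $\varphi$ on a suitable compact set.
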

\begin{proof}
We are going to prove that for any $\eps>0$ there exists a compact set $K\subset [a,b]$ with $\mathcal{L}([a,b]\setminus K)<\eps$ such the three following conditions are satisfied:
\begin{enumerate}
\item\label{(1)} $\dot\gamma(t)$ exists and it is a horizontal vector at every $t\in K$;
\item\label{(2)} $\dot\gamma|_K$ is uniformly continuous;
\item\label{(3)} For every $\eps>0$ there exists $\eta>0$ such that, for every $t\in K$
and $|h|\leq \eta$ with $t+h\in [a,b]$, it holds
$d_\mathbb{G}(\gamma(t+h),\gamma(t)\cdot\exp(h\dot\gamma(t)))\leq \eta \eps$. 
\end{enumerate}
With these conditions the $\mathcal{C}^1_H$-Whitney condition holds for $(\gamma,\dot\gamma|_K)$ on $K$. Since $\mathbb{G}$ is pliable, according to Theorem \ref{thm-sufficient} the $\mathcal{C}^1_H$ extension property holds for $(\R,\mathbb{G})$, yielding $\gamma_1$ as in the statement of Proposition \ref{our_Lusin_appr}. 

\emph{Case 1: $\gamma$ is Lipschitz continuous.}
Let $\gamma$ be a Lipschitz curve from $[a,b]$ to $\mathbb{G}$. 
The Pansu--Rademacher theorem (Proposition \ref{PR_them}) states that there exists $A\subset [a,b]$ of full measure such that, for any $t\in A$, the curve $\gamma$ admits a derivative at $t$ and it holds
$$d_\mathbb{G}(\gamma(t+h),\gamma(t)\cdot \exp(h\dot\gamma(t)))=o(h),$$
as $h$ goes to zero.
Let $\eps$ be positive. By Lusin's theorem, one can restrict $A$ to a compact set $K_1\subset A$ such that $t\mapsto \dot{\gamma}(t)$ is uniformly continuous on $K_1$ and $\mathcal{L}(A\setminus K_1)< \eps/2$. Moreover by classical arguments of measure theory, the functions $h\mapsto |h|^{-1}d_\mathbb{G}(\gamma(t+h),\gamma(t)\cdot \exp(h\dot\gamma(t_0)))$ can be bounded by a function that is $o(1)$ as $h$ goes to zero, uniformly in $t$ on some compact set $K_2$ with $\mathcal{L}(A\setminus K_2)< \eps/2$ . In other words for every $\eps>0$ there exists $\eta$ such that for $t\in K_2$ and $h\in[t-\eta,t+\eta]$ it holds 
$$d_\mathbb{G}\left(\gamma(t+h),\gamma(t)\cdot \exp(h\dot\gamma(t))\right)\leq \eps |h|.$$
With $K=K_1\cap K_2$, the three conditions \eqref{(1)}, \eqref{(2)},  \eqref{(3)} listed above hold true.

\emph{Case 2: $\gamma$ general horizontal curve.}
Let $\gamma$ be absolutely continuous on $[a,b]$. It admits a pathlength parametrisation, i.e., there exists a Lipschitz continuous curve $\varphi:[0,T]\to \mathbb{G}$ and a function $F:[a,b]\to[0,T]$, absolutely continuous and non-decreasing, such that $\gamma=\varphi\circ F$. Moreover $\dot\varphi$ has norm $1$ at almost every time. As $F$ is absolutely continuous, for every $\eps>0$ there exists $\eta$ such that, for any measurable $K$, the inequality $\mathcal{L}([0,T]\setminus K)< \eta$ implies $\mathcal{L}([a,b]\setminus F^{-1}(K))< \eps$. 

Let $\eps$ be positive and let $\eta$ be a number corresponding to $\eps/2$ in the previous sentence. Applying to $F$ the scheme of proof sketched after Proposition \ref{p:speight} for $n=1$, there exists a compact set $K_F\subset [a,b]$ with $\mathcal{L}([a,b]\setminus K_F)< \eps/2$ such that $F$ is differentiable with a continuous derivative on $K_F$ and the bound in the mean value inequality is uniform on $K_F$.  For the Lipschitz curve $\varphi$ and for every $\eta>0$, Case 1 provides a compact set $K_\varphi\subset [0,T]$ with the listed properties with $\eps/2$ in place $\eps$.

Let $K$ be the compact $K_F\cap F^{-1}(K_\varphi)$ and note that $\mathcal{L}([a,b]\setminus K)<\eps$. For $t\in K$ it holds
$$|F(t+h)-F(t)-hF'(t)|=o(h)$$
and
$$d_\mathbb{G}\left(\varphi(F(t)+H),\varphi(F(t))\cdot\exp(H\dot{\varphi}(F(t)))\right)=o(H),$$
as $h$ and $H$ go to zero, uniformly with respect to $t\in K$. We also know that $t\mapsto F'(t)$ and $t\mapsto \dot{\varphi}(F(t))\in\mathfrak{G}_H$ exist and are continuous on $K$. It is a simple exercise to compose the two Taylor expansions and obtain the wanted conditions for $\gamma=\varphi\circ F$. Note that the derivative of $\gamma$ on $K$ is $F'(t)\dot{\varphi}(F(t))$, which is continuous on $K$.\end{proof}

\begin{remark}
A set $E\subset \R^n$ is said $1$-countably rectifiable if there exists a countable family of Lipschitz curves $f_k:\R\to \R^n$ such that 
$$\mathcal{H}^1\left(E\setminus\bigcup_k f_k(\R)\right)=0.$$
The usual Lusin approximation of curves in $\R^n$ permits one to replace Lipschitz by $\mathcal{C}^1$ in this classical definition of rectifiability. When $\R^n$ is replaced by a pliable Carnot group the  two definitions still make sense and, according to Proposition \ref{our_Lusin_appr}, are still equivalent. Rectifiability in metric spaces and Carnot groups is a very active research topic in geometric measure theory (see \cite{LeDonne-Speight} for references). 
\end{remark}

\section{Conditions ensuring pliability}
\label{s:6}

The goal of this section is to identify conditions ensuring that $\mathbb{G}$ is pliable. 
Let us first focus on the pliability of the zero vector.

\begin{proposition}\label{p:zero-pliable} 
For every Carnot group $\mathbb{G}$, the vector $0\in\mathfrak{G}$ is pliable.
\end{proposition}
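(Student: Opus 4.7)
The plan is to invoke Proposition~\ref{p:pliableiffweaklypliable} applied to $V = 0$, reducing the statement to showing that for every $\eps > 0$ the set
\[
E_\eps = \bigl\{\beta(1) \mid \beta \in \mathcal{C}^1_H([0,1],\mathbb{G}),\ (\beta,\dot\beta)(0) = (0_\mathbb{G},0),\ \|\dot\beta\|_{\infty,\mathfrak{G}_H} \leq \eps\bigr\}
\]
is a neighborhood of $\exp(0) = 0_\mathbb{G}$. Indeed any $\mathcal{C}^1_H$-neighborhood of the constant curve $t \mapsto 0_\mathbb{G}$ contains a set of this form, since $\beta(0) = 0_\mathbb{G}$ together with $\|\dot\beta\|_{\infty,\mathfrak{G}_H} \leq \eps$ automatically forces $d_\mathbb{G}(\beta(t),0_\mathbb{G}) \leq \eps$ for every $t \in [0,1]$.

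To realize points near $0_\mathbb{G}$ as endpoints of such slow curves, I would fix a basis $X_1,\ldots,X_m$ of $\mathfrak{G}_H$ together with a smooth cut-off $\phi \in \mathcal{C}^\infty([0,1],[0,1])$ satisfying $\phi(0) = 0$, $\phi(1) = 1$ and $\phi'(0) = \phi'(1) = 0$ (for instance $\phi(t) = 3t^2 - 2t^3$), and set $M = \|\phi'\|_\infty$. Given $N \in \N$, a sequence $(i_1,\ldots,i_N) \in \{1,\ldots,m\}^N$ and parameters $s = (s_1,\ldots,s_N) \in \R^N$, define $\beta_s$ recursively by $\beta_s(0) = 0_\mathbb{G}$ and
\[
\beta_s(t) = \beta_s\!\bigl(\tfrac{k-1}{N}\bigr)\cdot \exp\!\bigl(s_k\,\phi(Nt - (k-1))\,X_{i_k}\bigr),\qquad t \in \bigl[\tfrac{k-1}{N},\tfrac{k}{N}\bigr].
\]
The vanishing of $\phi'$ at $0$ and $1$ makes the velocities of consecutive pieces coincide (both equal to zero) at the joints, so $\beta_s \in \mathcal{C}^1_H([0,1],\mathbb{G})$ with $\dot\beta_s(0) = 0$ and velocity bound $\|\dot\beta_s\|_{\infty,\mathfrak{G}_H} \leq N M \bigl(\max_j \|X_j\|_{\mathfrak{G}_H}\bigr)|s|_\infty$. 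A telescoping computation then gives $\beta_s(1) = \exp(s_1 X_{i_1}) \cdot \exp(s_2 X_{i_2}) \cdots \exp(s_N X_{i_N}) =: \Psi(s)$.

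The only non-trivial step is then to choose $N$ and $(i_1,\ldots,i_N)$ so that $\Psi : \R^N \to \mathbb{G}$ is open at the origin, i.e.\ $\Psi(\{|s|_\infty < \delta\})$ is a neighborhood of $0_\mathbb{G}$ for every $\delta > 0$. This is precisely Chow's theorem in the Carnot setting: since $\mathfrak{G}$ is bracket-generated by its horizontal layer, the Baker--Campbell--Hausdorff formula ensures that suitable products of exponentials of the $X_j$'s recover all higher-order commutator directions and thus cover a full neighborhood of $0_\mathbb{G}$. Granted this, given $\eps > 0$ one picks $\delta > 0$ so that $N M \max_j \|X_j\|_{\mathfrak{G}_H}\,\delta \leq \eps$; the curves $\beta_s$ with $|s|_\infty < \delta$ are then admissible, and their endpoints $\Psi(\{|s|_\infty < \delta\}) \subset E_\eps$ already form a neighborhood of $0_\mathbb{G}$, completing the argument. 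The main (and only) obstacle is the openness of $\Psi$ at the origin, a classical accessibility statement for bracket-generating distributions that can be invoked directly; the rest is a direct construction plus bookkeeping of the velocity bound.
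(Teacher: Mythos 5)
Your proposal is correct and follows essentially the same route as the paper: both reduce via Proposition~\ref{p:pliableiffweaklypliable} to showing that the endpoint set is a neighborhood of $0_\mathbb{G}$, realize products of horizontal exponentials as endpoints of slow $\mathcal{C}^1_H$ curves (you glue the pieces with a cutoff whose derivative vanishes at the joints, while the paper inserts explicit velocity-interpolating loops), and delegate the covering step to a classical accessibility result (you invoke Chow's theorem, the paper cites Sussmann's full-rank return map). The only point worth tightening is the uniformity in $\delta$: openness of $\Psi$ on $\{|s|_\infty<\delta\}$ for \emph{every} $\delta>0$ follows from a single $\delta_0$ via the identity $\Psi(\lambda s)=\delta_\lambda(\Psi(s))$, which is precisely the homogeneity trick the paper uses in its rescaling step.
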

\begin{proof}
According to Proposition~\ref{p:wp-vwp}, we should prove that for every $\eps>0$ 
the set 
$$\{\beta(1) \in \mathbb{G}\mid \beta\in \mathcal{C}^1_H([0,1],\mathbb{G}),\;\|\dot{\beta}\|_{\infty,\mathfrak{G}_H}<\eps,\; (\beta,\dot{\beta})(0)=(0_\mathbb{G},0)\}$$
is a neighborhood of $0_\mathbb{G}$ in $\mathbb{G}$.

Recall that there exist $k\in \N$, $V_1,\dots,V_k\in \mathfrak{G}_H$ and $t_1,\dots,t_k>0$ such that 
the map
$$\phi:(\tau_1,\dots,\tau_k)\mapsto e^{\tau_k V_k} \circ \dots \circ e^{\tau_1 V_1}(0_\mathbb{G})$$
has rank equal to $\mathrm{dim}(\mathbb{G})$ at $(\tau_1,\dots,\tau_k)=(t_1,\dots,t_k)$ 
and satisfies $\phi(t_1,\dots,t_k)=0_\mathbb{G}$
(see \cite{Sussmann76}).
Notice that for every $\nu>0$, the function 
\begin{align*}
\phi_\nu:(\tau_1,\dots,\tau_k)\mapsto &e^{\nu \tau_k V_k} \circ \dots \circ e^{\nu \tau_1 V_1}(0_\mathbb{G})\\
&=e^{\frac{\nu^2 \tau_k}{\nu}  V_k} \circ \dots \circ e^{\nu^2\frac{\tau_1}{\nu}  V_1}(0_\mathbb{G})=\delta_{\nu^2}\left(\phi\left(\frac{\tau_1}\nu,\dots,\frac{\tau_k}\nu\right)\right)
\end{align*}
has also rank equal to $\mathrm{dim}(\mathbb{G})$ at $(\tau_1,\dots,\tau_k)=(\nu t_1,\dots,\nu t_k)$ 
and satisfies $\phi_\nu(\nu t_1,\dots,\nu t_k)=0_\mathbb{G}$.
Hence, up to replacing 
$t_j$ by $\nu t_j$ 
and 
$V_j$ by $\nu^2 V_j$ for $j=1,\dots, k$ and $\nu$ small enough, we can assume that
$t_1+\dots+t_k<1$ and 
$\|V_j\|_{\mathfrak{G}_H}<\eps$ for $j=1,\dots,k$. 

Let $O$ be a neighborhood of $(t_1,\dots,t_k)$ such that for every $(\tau_1,\dots,\tau_k)\in O$ 
we have $\tau_1,\dots,\tau_k>0$ and $\tau_1+\dots+\tau_k<1
$. 
Notice that $\{\phi(\tau_1,\dots,\tau_k)\mid (\tau_1,\dots,\tau_k)\in O\}$ is a neighborhood of $0_\mathbb{G}$ in $\mathbb{G}$.

We complete the proof of the proposition by constructing, for every $\tau=(\tau_1,\dots,\tau_k)\in O$ a curve $\beta_{\tau} \in \mathcal{C}^1_H([0,1],\mathbb{G})$ such that 
\begin{equation}\label{betatau}
\|\dot{\beta}_\tau\|_{\infty,\mathfrak{G}_H}<\eps,\quad (\beta_\tau,\dot{\beta}_\tau)(0)=(0_\mathbb{G},0),\quad \beta_\tau(1)=\phi(\tau). 
\end{equation}

For every $X\in \mathfrak{G}_H$, $p\in \mathbb{G}$ and $r>0$ let us exhibit a curve $\gamma\in \mathcal{C}^1_H([0,r],\mathbb{G})$ such that $\gamma(0)=\gamma(r)=p$, $\dot \gamma(0)=0$, $\dot \gamma(r)=X$, and $\|\dot \gamma\|_{\infty,\mathfrak{G}_H}=\|X\|_{\mathfrak{G}_H}$. 
The curve $\gamma$ can be constructed  by imposing $\dot\gamma(r/2)=-X/2$ and by extending 
$\dot{\gamma}$ on $[0,r/2]$ and $[r/2,r]$ by convex interpolation. 
 It is also possible to reverse such a curve by transformation \ref{T4} and connect 
 on any segment $[0,r]$ 
 the point-with-velocity $(p,X)$ with the point-with-velocity $(p,0)$ by a $\mathcal{C}^1_H$ curve $\gamma$ respecting moreover $\|\dot\gamma\|_{\infty,\mathfrak{G}_H}= \|X\|_{\mathfrak{G}_H}$. 
 Finally just concatenating (transformation \ref{T5}) curves of this type it is possible, for every $r>0$,  to connect $(p,X)$ and $(p,Y)$ on  $[0,r]$ with a curve $\gamma_{r,X,Y}\in \mathcal{C}^1_H([0,r],\mathbb{G})$ with 
$\|\dot\gamma_{r,X,Y}\|_{\infty,\mathfrak{G}_H}= \max(\|X\|_{\mathfrak{G}_H},\|Y\|_{\mathfrak{G}_H})$.

We then construct $\beta_\tau$ as follows: we fix $r=(1-\sum_{j=1}^k\tau_j)/k$, we impose $\beta_\tau(0)=0_\mathbb{G}$  and 
we define $\dot\beta_\tau$ to be the concatenation of the following $2k$ continuous curves in $\mathfrak{G}_H$: first take $\dot\gamma_{r,0,V_1}$, then the constant equal to $V_1$ for a time $\tau_1$, then 
$\dot\gamma_{r,V_1,V_2}$, then the constant equal to $V_2$ for a time $\tau_2$, and so on up to
$\dot\gamma_{r,V_{k-1},V_k}$ and finally 
the constant equal to $V_k$ for a time $\tau_k$.
By construction,  $\beta_\tau\in \mathcal{C}^1_H([0,1],\mathbb{G})$ and satisfies \eqref{betatau}.
\end{proof}

\begin{remark}\label{zero-not-unif}
Let us show that, as a consequence of the previous proposition, pliability and local uniform pliability are not equivalent properties (albeit we know from Proposition~\ref{p=>up} that pliability of all horizontal vectors is equivalent to local uniform pliability of all horizontal vectors). 

Recall that local uniform pliability of a horizontal vector $X$ implies pliability of all horizontal vectors in a neighborhood of $X$ (cf.~Definition~\ref{def:up}). Therefore, if $0$ is locally uniformly pliable for a Carnot group $\mathbb{G}$ then every horizontal vector of $\mathfrak{G}$ is pliable (Remark~\ref{r:up-homogeneous}). 
Hence $0$ cannot be locally uniformly pliable if $G$ is not pliable. 
The remark is concluded by recalling that non-pliable Carnot groups exist (see Examples~\ref{e:Engel} and \ref{e:superEngel}).
\end{remark}

Let $\mathbb{G}$ be a 
Carnot group and let $X_1,\dots,X_m$ be an orthonormal basis of $\mathfrak{G}_{H}$. 
Let us consider the control system in $\mathbb{G}\times \R^m$ given by
\begin{equation}\label{CS}
\left\{
\begin{aligned}
\dot\gamma&=\sum_{i=1}^{m} u_i X_i(\gamma),\\
\dot{u}&=v, 
\end{aligned}
\right.
\end{equation}
where both $u=(u_1,\dots,u_m)$ and the control $v=(v_1,\dots,v_m)$ vary in $\R^m$.

Let us rewrite $x=(\gamma,u)$, 
\begin{equation*}\label{Fis}
F_0(x)=\left(\begin{array}{c}\sum_{i=1}^{m} u_i X_i(\gamma)\\0\end{array}\right), \qquad F_i(x)=\left(\begin{array}{c}0\\
e_i\end{array}\right)\quad \mbox{ for }i=1,\dots,m,
\end{equation*}
 where $e_1,\dots,e_m$ denotes the canonical basis of $\R^m$. 
System~\eqref{CS} can then be rewritten as
\begin{equation}\label{CS2}
\dot x=F_0(x)+\sum_{i=1}^m v_i F_i(x).
\end{equation}

For every $\bar u\in \R^m$, let $\mathcal{F}_{\bar u}:L^1([0,1],\R^m)\to  \mathbb{G}\times\R^m$ be the 
endpoint map at time $1$
  for system \eqref{CS2} with initial condition $(0_\mathbb{G},\bar u)$.
Notice that if $x(\cdot)=(\gamma(\cdot),u(\cdot))$ is a solution of  \eqref{CS2} 
with initial condition $(0_\mathbb{G},\bar u)$
corresponding to a control $v\in L^1([0,1],\R^m)$, then $\gamma\in \mathcal{C}^1_H([0,1],\mathbb{G})$ and 
$\|\dot \gamma-\sum_{i=1}^m \bar u_i X_i\|_{\infty,\mathfrak{G}_H}\leq \|v\|_1$. 

We can then state the following criterium for pliability. 
\begin{proposition}
If the map $\mathcal{F}_{\bar u}:L^1([0,1],\R^m)\to  \mathbb{G}\times\R^m$ is open at $0$, then  the horizontal vector $\sum_{i=1}^m \bar u_i X_i$ is pliable. 
\end{proposition}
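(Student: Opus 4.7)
The plan is to verify directly the pliability of $V := \sum_{i=1}^m \bar u_i X_i$ from the openness of $\mathcal{F}_{\bar u}$ at $0$, using the natural bridge between the augmented control system \eqref{CS2} and horizontal curves in $\mathbb{G}$ with prescribed initial velocity.

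First, I would set up the identification. A solution $x=(\gamma,u)$ of \eqref{CS2} with initial condition $(0_\mathbb{G},\bar u)$ and control $v\in L^1([0,1],\R^m)$ is precisely a pair where $u(t)=\bar u+\int_0^t v(s)\,ds$ and $\gamma$ is the horizontal curve with $\dot\gamma(t)=\sum_i u_i(t)X_i$ (as a left-invariant vector field at $\gamma(t)$). In particular $\gamma(0)=0_\mathbb{G}$, $\dot\gamma(0)=V$, and under the canonical linear identification of $\R^m$ with $\mathfrak{G}_H$ via the orthonormal basis $X_1,\dots,X_m$, the second coordinate $u(1)$ of $\mathcal{F}_{\bar u}(v)$ equals $\dot\gamma(1)\in \mathfrak{G}_H$. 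Writing $\gamma_v$ for the horizontal curve so obtained, the point-with-velocity $(\gamma_v(1),\dot\gamma_v(1))$ is exactly the image of $v$ under $\mathcal{F}_{\bar u}$ (after identification). Moreover $\mathcal{F}_{\bar u}(0)=(\exp(V),\bar u)$, which corresponds to $(\exp(V),V)$.

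Next, I take an arbitrary $\mathcal{C}^1_H$-neighborhood $\mathcal{V}$ of the straight curve $\gamma_V:t\mapsto \exp(tV)$ and show that the set
$$ S:=\{(\beta(1),\dot\beta(1))\mid \beta\in\mathcal{V},\;(\beta(0),\dot\beta(0))=(0_\mathbb{G},V)\}$$
is a neighborhood of $(\exp(V),V)$ in $\mathbb{G}\times\mathfrak{G}_H$. Using the inequality $\|\dot\gamma_v-V\|_{\infty,\mathfrak{G}_H}\leq \|v\|_1$ recalled just before the statement, together with continuous dependence on $u_v$ of the solution of $\dot\gamma=\sum u_v^i X_i(\gamma)$ in the uniform topology (combined with the equivalence of the $d_\mathbb{G}$-topology and the Euclidean topology on compact subsets of $\mathbb{G}$), I select $\delta>0$ small enough so that $\|v\|_1<\delta$ forces $\gamma_v$ to belong to $\mathcal{V}$. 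Note that $\gamma_v$ automatically satisfies $(\gamma_v(0),\dot\gamma_v(0))=(0_\mathbb{G},V)$, so the corresponding pair $(\gamma_v(1),\dot\gamma_v(1))$ sits in $S$.

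Finally I apply the hypothesis: by openness of $\mathcal{F}_{\bar u}$ at $0$, the image $\mathcal{F}_{\bar u}(B_{L^1}(0,\delta))$ is a neighborhood of $(\exp(V),\bar u)$ in $\mathbb{G}\times\R^m$, which under the identification in the first step becomes a neighborhood of $(\exp(V),V)$ in $\mathbb{G}\times\mathfrak{G}_H$ contained in $S$. This is exactly the pliability of $V$ as in Definition~\ref{d:pliable}. The main obstacle in this plan is the quantitative step of turning $\|v\|_1<\delta$ into membership of $\gamma_v$ in $\mathcal{V}$: the cited inequality controls only the uniform distance between derivatives, whereas the $\mathcal{C}^1_H$-metric on $\mathcal{C}^1_H([0,1],\mathbb{G})$ also involves the uniform $d_\mathbb{G}$-distance between curves, which must be recovered from the continuous dependence of the flow on the parameter $u_v$, exploiting that $\|u_v-\bar u\|_\infty\leq \|v\|_1$.
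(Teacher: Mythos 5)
Your proposal is correct and follows exactly the route the paper intends: the proposition is stated as an immediate consequence of the preceding observation that a solution of \eqref{CS2} with control $v$ yields a curve $\gamma_v\in\mathcal{C}^1_H([0,1],\mathbb{G})$ with $(\gamma_v,\dot\gamma_v)(0)=(0_\mathbb{G},V)$, $\|\dot\gamma_v-V\|_{\infty,\mathfrak{G}_H}\le\|v\|_1$, and $(\gamma_v(1),\dot\gamma_v(1))=\mathcal{F}_{\bar u}(v)$ under the identification of $\R^m$ with $\mathfrak{G}_H$. You correctly identify and resolve the only detail left implicit, namely that smallness of $\|v\|_1$ also controls $\sup_t d_\mathbb{G}(\gamma_v(t),\exp(tV))$ via continuous dependence of the flow on $u_v$ together with the equivalence of the $d_\mathbb{G}$ and Euclidean topologies on compact sets.
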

As a consequence, if the restriction of   $\mathcal{F}_{\bar u}$ to $L^\infty([0,1],\R^m)$ is open at $0$, when  the $L^\infty$ topology is considered on $L^\infty([0,1],\R^m)$, then $\sum_{i=1}^m \bar u_i X_i$ is pliable. We deduce the following property: \emph{if a straight curve is not pliable, then it admits an abnormal lift in $T^*\mathbb{G}$.  Indeed,  if a horizontal vector $\sum_{i=1}^m \bar u_i X_i$ is not pliable, then the differential of $\mathcal{F}_{\bar u}|_{L^\infty([0,1],\R^m)}$} at $0$ must be singular.
Hence (see, for instance, \cite[Section 20.3]{AgrSac} or \cite[Proposition 5.3.3]{Trelat}), there exist $p_\gamma:[0,1]\to T^*\mathbb{G}$ and $p_u:[0,1]\to (\R^m)^*$ with $(p_\gamma,p_u)\ne 0$ 
such that 
\begin{align}
\dot p_\gamma(t)&=-\frac{\partial}{\partial \gamma}H(\gamma(t),\bar u,p_\gamma(t),p_u(t),0),\label{H-lg}\\
 \dot p_u(t)&=-\frac{\partial}{\partial u}H(\gamma(t),\bar u,p_\gamma(t),p_u(t),0),\label{H-lu}\\
0&= \frac{\partial}{\partial v}H(\gamma(t),\bar u,p_\gamma(t),p_u(t),0),\label{H-v}
\end{align}
for $t\in[0,1]$, where $\gamma(t)=\exp(t \sum_{i=1}^m \bar u_i X_i)$ and 
$$H(\gamma,u,p_\gamma,p_u,v)=p_\gamma\sum_{i=1}^m u_i X_i(\gamma)+ p_u v.$$
From \eqref{H-v} it follows that $p_{u}(t)= 0$ for all $t\in[0,1]$. Equation \eqref{H-lu} then implies that $p_\gamma(t) X_i(\gamma(t))=0$ for every $i=1,\dots,m$ and every $t\in[0,1]$. Moreover, $p_\gamma$ must be different from zero. Comparing \eqref{eq-abn} and \eqref{H-lg}, it follows that $p_\gamma$ is an abnormal path. 

The control literature proposes several criteria  for testing the  
openness at $0$ of an endpoint map of the type $\mathcal{F}_{\bar u}|_{L^\infty([0,1],\R^m)}$.
The test presented here below, taken from \cite{BianchiniStefani}, generalizes previous criteria obtained in  \cite{Hermes82} and \cite{Sussmann87}.

\begin{theorem}[{Bianchini and Stefani \cite[Corollary 1.2]{BianchiniStefani}}]\label{HermesSussmann}
Let $M$ be a $\mathcal{C}^\infty$ manifold and 
$V_0,V_1,\dots,V_m$ be $\mathcal{C}^\infty$ vector fields on  $M$. 
Assume that the family of vector fields $\mathcal{J}=\{\mathrm{ad}^k_{V_0}V_j\mid k\geq 0,\;j=1,\dots,m\}$ is Lie bracket generating. 
Denote by $\mathcal{H}$ the iterated brackets of elements in $\mathcal{J}$ and recall that the length of an element of $\mathcal{H}$ is the sum of the number of times that each of the elements $V_0,\dots,V_m$ appears in its expression.  
Assume that every element of $\mathcal{H}$ in whose expression  each of the vector fields $V_1,\dots,V_m$ appears an even number of times is equal, at every $q\in M$, to the linear combination of elements of $\mathcal{H}$ of smaller length, evaluated at $q$. 
Fix $q_0\in M$ and a neighborhood $\Omega$ of $0$ in $\R^m$.
Let $\mathcal U\subset L^\infty([0,1],\Omega)$ 
be the set of those controls $v$ such that 
the solution of $\dot q=V_0(q)+\sum_{i=1}^m v_i V_i(q)$, $q(0)=q_0$, is defined up to time $1$ and denote by $\Phi(v)$ the endpoint $q(1)$ of such a solution. 
Then $\Phi(\mathcal{U})$ 
 is a neighborhood of $e^{V_0}(q_0)$. 
\end{theorem}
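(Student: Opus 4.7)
The statement is quoted from \cite{BianchiniStefani}, so my plan is really to recall how results of this type are proved and indicate where the specific hypothesis on "bad brackets" enters. I would proceed along the lines of the high-order controllability arguments of Sussmann, Hermes and Bianchini--Stefani.

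The first step is to expand the endpoint map $\Phi$ around the reference control $v \equiv 0$ using a Chen--Fliess (or Magnus) series. Formally, for a small control $v$, one has
$$\Phi(v) = e^{V_0}(q_0) + \sum_I c_I(v)\,(\mathrm{ad}_{V_0}\text{-nested brackets of }V_{i_1},\dots,V_{i_k})(q_*) + \text{remainder},$$
where each coefficient $c_I(v)$ is an iterated integral of the components $v_{i_1},\dots,v_{i_k}$ with respect to the flow of $V_0$, and $q_*$ is a point along the reference trajectory. The brackets appearing here are precisely the iterated brackets of elements of $\mathcal{J} = \{\mathrm{ad}_{V_0}^k V_j\}$, i.e.\ elements of $\mathcal{H}$. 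This step is the classical formal differentiation of the endpoint map, and it is independent of the parity hypothesis.

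The second and central step is to separate $\mathcal{H}$ into \emph{good} brackets (those in which the $V_1,\dots,V_m$ collectively appear an odd number of times in at least one of the $V_i$'s, together with those whose contribution has definite sign) and \emph{bad} brackets, i.e.\ those in which each $V_1,\dots,V_m$ appears an even number of times. The key algebraic input of Bianchini--Stefani is that any bad bracket is expressible, at every point of $M$, as a linear combination of lower-length elements of $\mathcal{H}$. This allows one to reabsorb, inductively on the length, every bad bracket's contribution into the good-bracket expansion. The output is an expansion in which only good brackets appear, modulo a controllable remainder.

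The third step is the variational construction: since the good brackets generate the tangent space at each point (this is guaranteed by the Lie-bracket generating assumption on $\mathcal{J}$ combined with the reduction), one may explicitly build, for each direction $\xi \in T_{e^{V_0}(q_0)}M$, a family of $L^\infty$ controls $v_\varepsilon$ with values in $\Omega$ whose Chen--Fliess coefficients at the appropriate order produce a displacement $\varepsilon\xi + o(\varepsilon)$ in the direction $\xi$. Typical choices are piecewise-constant ``needle-like'' variations or Rademacher/sinusoidal variations tuned to the weight of the relevant good bracket; the oddness in the $V_i$ directions is what guarantees that the corresponding iterated integrals can be driven to take arbitrary signs.

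The final step is a topological/fixed-point argument: having built an approximate surjection $\xi \mapsto v_\xi$ whose leading-order image is a neighbourhood of $0$ in $T_{e^{V_0}(q_0)}M$, one invokes a quantitative open-mapping statement (Brouwer degree, or a Schauder-type fixed point applied to the composition with $\Phi$) to conclude that $\Phi(\mathcal{U})$ contains a full neighbourhood of $e^{V_0}(q_0)$. The main obstacle is the bookkeeping of the orders in step two: one must ensure that after reabsorbing each bad bracket into lower-order good ones, the remainder term stays strictly of higher order than the good-bracket contributions used in step three. This is exactly where the precise length-filtration in the definition of $\mathcal{H}$ plays its role, and it is the technical heart of the Bianchini--Stefani argument.
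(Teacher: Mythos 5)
The paper offers no proof of this statement: it is imported verbatim from Bianchini and Stefani (their Corollary~1.2), and the authors use it purely as a black box to test openness of the endpoint map in Theorem~\ref{t:step2} and Proposition~\ref{p:every_step}. So there is no in-paper argument to compare yours against; the honest ``proof'' in the context of this paper is the citation, which you correctly acknowledge at the outset.

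As a reconstruction of how the cited result is actually established, your outline follows the standard Sussmann/Hermes/Bianchini--Stefani route (Chen--Fliess expansion of the endpoint map along the reference drift trajectory, good/bad bracket dichotomy, neutralization of bad brackets via the hypothesis that they lie pointwise in the span of shorter brackets, needle or oscillatory variations exploiting oddness to produce displacements of both signs, and a Brouwer-degree open-mapping argument to pass from an approximate surjection to an actual neighborhood). Two points deserve flagging. First, your description of the \emph{good} brackets is garbled: the dichotomy is simply that a bracket is \emph{bad} when each of $V_1,\dots,V_m$ appears an even number of times (these are the ones whose iterated-integral coefficients cannot change sign under $v\mapsto -v$ and hence create potential obstructions), and \emph{good} otherwise; there is no separate clause about ``definite sign''. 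Second, the expansion should be phrased in terms of the brackets pulled back along the flow of $V_0$ (evaluated at $q_0$ or transported to $e^{V_0}(q_0)$), which is exactly why the relevant generating family is $\mathcal{J}=\{\mathrm{ad}^k_{V_0}V_j\}$ rather than $\{V_j\}$; your ``$q_*$ a point along the reference trajectory'' blurs this. Beyond that, what you give is a sketch whose technical heart --- the uniform control of the remainder after reabsorbing bad brackets, which you yourself identify as the main obstacle --- is not carried out; that is acceptable here only because the statement is a quoted external theorem rather than a claim the paper undertakes to prove.
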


The following two results show how to apply Theorem~\ref{HermesSussmann} to guarantee that a Carnot group $\mathbb{G}$ is pliable and, hence, that $(\R,\mathbb{G})$ has the $\mathcal{C}^1_H$ extension property. 
\begin{theorem}\label{t:step2}
Let $\mathbb{G}$ be a Carnot group of step $2$. Then $\mathbb{G}$ is pliable and $(\R,\mathbb{G})$ has the $\mathcal{C}^1_H$ extension property.
\end{theorem}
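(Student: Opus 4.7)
My plan is to apply the Bianchini--Stefani criterion (Theorem~\ref{HermesSussmann}) to the extended control system \eqref{CS2} on $M=\mathbb{G}\times\R^m$ with $V_0=F_0$ and $V_i=F_i$ ($i=1,\dots,m$), and then invoke the criterion preceding Theorem~\ref{HermesSussmann} to deduce pliability of every horizontal vector $\sum_{i=1}^m \bar u_i X_i$. Once $\mathbb{G}$ is known to be pliable, Theorem~\ref{thm-necessary-and-sufficient} yields the $\mathcal{C}^1_H$ extension property for $(\R,\mathbb{G})$.

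The first step is a direct computation of the iterated brackets generating $\mathcal{J}$. Because $F_i=(0,e_i)$ and $F_0=(\sum_j u_j X_j(\gamma),0)$, one finds
\[ [F_0,F_i]=(-X_i(\gamma),0), \qquad [F_0,[F_0,F_i]]=\Bigl(\sum_{k=1}^m u_k[X_i,X_k](\gamma),0\Bigr). \]
Since $\mathbb{G}$ has step $2$, the triple bracket $[X_j,[X_i,X_k]]$ vanishes identically, hence $\mathrm{ad}^k_{F_0}F_i=0$ for all $k\ge 3$. Therefore $\mathcal{J}=\{F_i,\,[F_0,F_i],\,[F_0,[F_0,F_i]]\mid i=1,\dots,m\}$.

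Next, I would verify the Lie bracket generating condition. The elements $F_i=(0,e_i)$ produce the $\R^m$-directions, while $[F_0,F_i]=(-X_i,0)$ span the $\mathfrak{G}_H$-directions of $T\mathbb{G}$. Taking one further bracket yields $[[F_0,F_i],[F_0,F_j]]=([X_i,X_j],0)$, and since $\mathbb{G}$ has step $2$ the vectors $[X_i,X_j]$ span $\mathfrak{G}_2$. Hence $\mathcal{H}$ (the iterated brackets of $\mathcal{J}$) generates the full tangent bundle of $\mathbb{G}\times\R^m$ at every point.

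The central step is checking the ``obstruction'' hypothesis of Theorem~\ref{HermesSussmann}: every iterated bracket in $\mathcal{H}$ in which each of $F_1,\dots,F_m$ appears an even number of times should be a combination, at each point, of brackets of strictly smaller length. The key observation is that step~$2$ makes this condition hold essentially for free: a straightforward verification shows $[F_i,[F_0,F_j]]=0$ and $[F_i,[F_0,[F_0,F_j]]]=([X_j,X_i],0)$, and crucially $[[F_0,F_i],[F_0,[F_0,F_j]]]=-\sum_k u_k[X_i,[X_j,X_k]]\,\partial_\gamma=0$ as well as $[[F_0,[F_0,F_i]],[F_0,[F_0,F_j]]]=0$, again by step~$2$. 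A short induction on length, using the fact that any bracket whose $\gamma$-component is already a sum of left-invariant vector fields in $\mathfrak{G}_2$ is killed by any further bracket with $F_i$, $[F_0,F_i]$ or $[F_0,[F_0,F_i]]$, shows that all iterated brackets of length at least $3$ (beyond the basic $[[F_0,F_i],[F_0,F_j]]$) vanish. In particular every ``bad'' bracket is zero, trivially satisfying the obstruction condition.

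The main (and essentially only) obstacle is bookkeeping: the vector field $[F_0,[F_0,F_i]]$ depends on $u$, so brackets with $F_k=\partial_{u_k}$ pick up the nontrivial term $([X_i,X_k],0)$; one must carry this $u$-dependence carefully while showing that no new non-vanishing bracket of a ``bad'' parity type appears. Once this is settled, Theorem~\ref{HermesSussmann} gives, for every $\bar u\in\R^m$ and every $L^\infty$-neighborhood $\Omega$ of $0$, that $\mathcal{F}_{\bar u}(L^\infty([0,1],\Omega))$ is a neighborhood of $(\exp(\sum_i\bar u_i X_i),\bar u)$. By the proposition preceding Theorem~\ref{HermesSussmann}, the horizontal vector $\sum_{i=1}^m\bar u_i X_i$ is pliable. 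As $\bar u$ is arbitrary, $\mathbb{G}$ is pliable, and Theorem~\ref{thm-necessary-and-sufficient} then gives the $\mathcal{C}^1_H$ extension property for $(\R,\mathbb{G})$.
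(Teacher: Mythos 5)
Your proposal is correct and follows essentially the same route as the paper: compute the brackets $[F_0,F_i]$, $[F_0,[F_0,F_i]]$, $[[F_0,F_i],[F_0,F_j]]$ for the extended system, observe that step $2$ kills all the ``bad'' (even-parity) brackets, apply the Bianchini--Stefani criterion to get openness of the endpoint map, and conclude via the pliability criterion and Theorem~\ref{thm-sufficient}. The only quibble is that your blanket claim that all longer brackets vanish is slightly too strong (as you yourself note, $[F_i,[F_0,[F_0,F_j]]]=([X_j,X_i],0)$ is nonzero for $i\ne j$), but this bracket has odd parity and so does not affect the hypothesis being checked.
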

\begin{proof}
We are going to apply Theorem~\ref{HermesSussmann}
in order to 
prove that for every horizontal vector $\sum_{i=1}^m u_iX_i$ the 
endpoint map $\mathcal{F}_{u}:L^\infty([0,1],\R^m)\to  \mathbb{G}\times\R^m$
is open at zero.

Notice that
$$[F_0,F_i](\gamma,w)=-\begin{pmatrix}X_i(\gamma)\\ 0\end{pmatrix},\qquad i=1,\dots,m,$$
and
$$[F_0,[F_0,F_i]](\gamma,w)=\begin{pmatrix}\sum_{j=1}^m w_j[X_i,X_j](\gamma)\\ 0\end{pmatrix},\qquad i=1,\dots,m.$$
Moreover for every  $i,j=1,\dots,m$, 
$$[[F_0,F_i],F_j]=0,\ [[F_0,F_i],[F_0,F_j]](\gamma,w)=\begin{pmatrix}[X_i,X_j](\gamma)\\ 0\end{pmatrix},$$
and all other Lie bracket in and between elements of
$\mathcal{J}=\{{\rm ad}^k_{F_0}F_i\mid k\geq 0,\;i=1,\dots,m\}$
is zero since $\mathbb{G}$ is of step 2.

In particular all Lie brackets between elements of $\mathcal{J}$ in which each of the vector fields  $F_1,\dots,F_m$ appears an even number of times is zero. 

According to Theorem~\ref{HermesSussmann},  we are left to prove that 
$\mathcal{J}$ is Lie bracket generating.
This is clearly true, since
$$\mathrm{Span}\{F_i(\gamma,w),[F_0,F_i](\gamma,w),[[F_0,F_i],[F_0,F_j]](\gamma,w)\mid i,j=1,\dots,m\}$$
is equal to $T_{(\gamma,w)}(\mathbb{G}\times \R^m)$ for every $(\gamma,w)\in \mathbb{G}\times\R^m$.
\end{proof}

We conclude the paper by showing how to construct pliable Carnot groups of arbitrarily large step. 

\begin{proposition}\label{p:every_step}
For every  $s\ge 1$ there exists a pliable Carnot group  of step $s$. 
\end{proposition}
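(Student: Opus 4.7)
The plan is to find, for each $s\ge 1$, one explicit Carnot group of step $s$ and establish its pliability through the same endpoint-openness strategy used in the proof of Theorem~\ref{t:step2}. The cases $s=1$ (any abelian group, e.g.\ $\R$) and $s=2$ (any Heisenberg group, by Theorem~\ref{t:step2}) are immediate, so the substance lies in producing an example for each $s\ge 3$ and running the Bianchini--Stefani criterion on it.

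First I would build a candidate step-$s$ Carnot algebra $\mathfrak{G}_s$ with a horizontal layer of sufficiently large dimension $m$, so that, unlike in Example~\ref{e:Engel}, no nonzero horizontal direction is singled out by a Goursat-type flag. A natural proposal is a stratified algebra on three (or more) generators $X_1,\dots,X_m$ in which the higher strata are realized through bracket relations symmetric among the $X_i$, chosen so that for each nonzero $V=\sum u_iX_i$ the iterated operators $\mathrm{ad}_V$ surject onto each layer $\mathfrak{G}_k$, $k\le s$. In particular, along the straight curve $t\mapsto\exp(tV)$ no abnormal lift can annihilate all of $\bigcup_{k\ge 0}\mathrm{ad}_V^k(\mathfrak{G}_H)$, which by the discussion after Theorem~\ref{HermesSussmann} is a sufficient condition for pliability of $V$.

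Next I would apply Theorem~\ref{HermesSussmann} to the extended system \eqref{CS2} at every nonzero $\bar u\in\R^m$, following exactly the pattern in the proof of Theorem~\ref{t:step2}. The Lie bracket generating hypothesis for $\mathcal{J}=\{\mathrm{ad}^k_{F_0}F_j\mid k\ge 0,\ j=1,\dots,m\}$ is handled by noticing that iterated brackets of the form
\[
[[F_0,F_{i_1}],[[F_0,F_{i_2}],[\cdots,[F_0,F_{i_k}]]]]
\]
produce in the $\gamma$-component the iterated horizontal brackets $[X_{i_1},[X_{i_2},[\cdots,X_{i_k}]]]$, and the symmetric bracket structure of $\mathfrak{G}_s$ ensures these span all of $\mathfrak{G}_s$; the $\R^m$-directions are generated directly by $F_1,\dots,F_m$.

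The hard part will be the second hypothesis of Theorem~\ref{HermesSussmann}: every bracket in which each $F_j$, $j\ge 1$, appears an even number of times must reduce, at each point, to a linear combination of brackets of strictly smaller length. In step $2$ this was automatic because such nontrivial even brackets do not arise, but for $s\ge 3$ they appear already at modest length, for instance $[F_j,\mathrm{ad}^3_{F_0}F_j]$ acquires a nonzero $\gamma$-component that is quadratic in $w$. I would carry out the reduction by induction on bracket length, using Jacobi repeatedly together with the specific bracket relations of $\mathfrak{G}_s$ to write each even bracket as a combination of shorter ones. Once this algebraic verification is complete, Theorem~\ref{HermesSussmann} yields that the endpoint map $\mathcal{F}_{\bar u}$ is open at $0$ for every $\bar u\neq 0$, which combined with Proposition~\ref{p:zero-pliable} for the zero horizontal vector gives the pliability of $\mathfrak{G}_s$ and completes the proof.
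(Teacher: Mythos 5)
Your overall strategy (exhibit one step-$s$ group and verify the hypotheses of Theorem~\ref{HermesSussmann} for the extended system \eqref{CS2}, as in the proof of Theorem~\ref{t:step2}) is exactly the paper's, but the proof has a genuine gap: you never actually construct the group, and the construction is the whole content of the proposition. You describe ``a stratified algebra on three (or more) generators in which the higher strata are realized through bracket relations symmetric among the $X_i$'' and then defer the crucial even-bracket reduction to an unspecified induction ``using the specific bracket relations of $\mathfrak{G}_s$.'' But that reduction is precisely where a generic choice fails: for the free nilpotent algebra of step $3$ on two generators (the Engel group of Example~\ref{e:Engel}) the even brackets such as $[F_j,\mathrm{ad}^3_{F_0}F_j]$ are genuinely nonreducible and the group is not pliable, so the conclusion depends entirely on which algebra you pick. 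Asserting that Jacobi ``repeatedly'' will reduce all even brackets is not a proof; for a symmetric algebra in which $\mathrm{ad}_V^{k}$ surjects onto every layer, brackets with a repeated generator (e.g.\ $[X_i,[X_i,X_j]]$) are typically nonzero, and there is no reason the reduction closes.

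Your fallback sufficient condition --- that $\bigcup_{k\ge 0}\mathrm{ad}_V^k(\mathfrak{G}_H)$ spans $\mathfrak{G}$ for every nonzero $V$, so that no abnormal lift exists --- is logically valid (it is the contrapositive of the discussion following \eqref{H-v}), but it is a much stronger requirement than pliability and you give no evidence that a step-$s$ algebra satisfying it exists; by grading it forces $\mathrm{ad}_V^{k-1}(\mathfrak{G}_H)=\mathfrak{G}_k$ for all $k\le s$ and all $V\neq 0$, which already fails for every two-generator algebra of step $3$. Note that the paper's own example does \emph{not} satisfy this condition (its straight curves do admit abnormal lifts), so this route would in any case lead you away from the intended construction. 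The paper resolves the difficulty by a specific algebraic device: take the free nilpotent stratified algebra of step $s$ on $s$ generators $Z_1,\dots,Z_s$ and quotient by the ideal $\oplus_i [I_i,I_i]$, where $I_i$ is the ideal generated by $Z_i$. In the quotient, every bracket in which some generator appears more than once vanishes, while $[X_1,[X_2,[\cdots,X_s]]\cdots]$ survives, so the step is still $s$; consequently every bracket of elements of $\mathcal{J}$ in which each $F_i$ appears an even number of times is identically zero, and the second hypothesis of Theorem~\ref{HermesSussmann} holds trivially (after a short Jacobi-identity induction rewriting brackets of elements of $\mathcal{J}$ in terms of brackets of elements of $\widehat{\mathcal{J}}=\{\mathrm{ad}_{F_0}^{k+1}F_i\}$). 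Without this construction, or an equally explicit one for which the even-bracket hypothesis is actually verified, your argument does not go through.
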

\begin{proof}
Fix $s\ge 1$ and consider the free nilpotent stratified Lie algebra $\mathcal{A}$ of step $s$ generated by $s$ elements $Z_1,\dots,Z_s$. 

For every $i=1,\dots,s$, denote by $I_i$ the ideal of $\mathcal{A}$ generated by $Z_i$ and by $J_i$ the ideal $[I_i,I_i]$. Then $J=\oplus_{i=1}^s J_i$ is also an ideal of $\mathcal{A}$.

 Then the factor algebra $\mathfrak{G}=\mathcal{A}/J$ is nilpotent and inherits the stratification of $\mathcal{A}$.  
 Denote by  $\mathbb{G}$ the Carnot group
 generated by $\mathfrak{G}$.
 Let  $X_1,\dots,X_s$ be the elements of $\mathfrak{G}_H$ obtained by projecting  $Z_1,\dots,Z_s$. 
  By construction, every bracket of $X_1,\dots,X_s$ in $\mathfrak{G}$ in which at least one  of the $X_i$'s appears more than once is zero. Moreover,  $\mathfrak{G}$ has step $s$, since $[X_1,[X_2,[\,\cdots,X_s],\cdots]$ is different from zero. 
  
 Let us now apply  
  Theorem~\ref{HermesSussmann} in order to 
prove that for every $X\in\mathfrak{G}_H$ the 
endpoint map $\mathcal{F}_{u}:L^\infty([0,1],\R^s)\to  \mathbb{G}\times\R^s$
is open at zero, where $u\in\R^s$ is such that $X=\sum_{i=1}^s u_i X_i$. 
 
Following the same computations as in the proof  of Theorem~\ref{t:step2}, 
$$\mathrm{ad}_{F_0}^{k+1} F_i(\gamma,u)=\begin{pmatrix}\mathrm{ad}_{X}^k X_i(\gamma)\\ 0\end{pmatrix},\qquad k\geq0,\ i=1,\dots,s.$$
In particular the family $\mathcal{J}=\{{\rm ad}^k_{F_0}F_i\mid k\geq 0,\;i=1,\dots,s\}$ is Lie bracket generating.  

Moreover,  every Lie bracket of elements of $\widehat{\mathcal{J}}=\{\mathrm{ad}_{F_0}^{k+1} F_i\mid k\geq 0,\;i=1,\dots,s\}$ in which at least one  of the elements $F_1,\dots,F_s$ appears more than once is zero.

Consider now a Lie bracket $W$ between $h\geq 2$ elements of $\mathcal{J}$. Let $k_1,\dots,k_s$ be the number of times in which each of the elements $F_1,\dots,F_s$ appears in $W$. 
Let us prove by induction on $h$ that $W$ is the linear combination of brackets between elements of $\widehat{\mathcal{J}}$ 
in which each $F_i$ appears $k_i$ times, $i=1,\dots,s$.
Consider the case $h=2$. 
Any bracket of the type $[\mathrm{ad}_{F_0}^{k} F_i,F_j]$, $k\ge 0$, $i,j=1,\dots,s$,
is the linear combination of brackets between elements of $\widehat{\mathcal{J}}$ 
in which $F_i$ and $F_j$ appear once, as it can easily be proved by induction on $k$, thanks to the Jacobi identity. The induction step on $h$ also follows directly from the Jacobi identity. 

We can therefore conclude that every Lie bracket of elements of $\mathcal{J}$ in which at least one  of the elements $F_1,\dots,F_s$ appears more than once is zero. 
This implies in particular that the hypotheses of Theorem~\ref{HermesSussmann} are satisfied, concluding the proof that $\mathbb{G}$ is pliable. 
\end{proof}

\noindent{\bf Acknowledgment.}
We warmly thank Fr\'ed\'eric Chapoton and Gw\'ena\"el Massuyeau for the suggestions leading us to Proposition~\ref{p:every_step}.
We are also grateful to Artem Kozhevnikov, Dario Prandi, Luca Rizzi and Andrei Agrachev 
for many stimulating discussions.
This work has been initiated during the IHP trimester ``Geometry, analysis and dynamics on sub-Riemannian manifolds'' and we wish to thank the Institut Henri Poincar\'e and the Fondation Sciences Math\'ematiques de Paris for the welcoming working conditions.

The second author has been supported by the European
Research Council, ERC StG 2009 ``GeCoMethods'', contract number 239748, by the Grant ANR-15-CE40-0018 of the ANR and by the FMJH Program Gaspard Monge in optimization and operation research.

\bibliographystyle{abbrv}
\bibliography{biblio}

\end{document}